\documentclass[11pt]{article}

\usepackage{amsmath,amssymb,amsthm}
\usepackage{bm}
\usepackage{commath}
\usepackage{mathrsfs}
\usepackage{authblk}
\usepackage{url}
\usepackage{pgfplots}
\usepackage{soul}
\usepackage{stmaryrd}
\usepackage{booktabs}
\usepackage{microtype}
\usepackage{placeins}
\usepackage[shortlabels]{enumitem}
\usepackage{cleveref}
\usepackage{graphicx}

\usepackage[margin=1in]{geometry}

\makeatletter

\@ifundefined{linespacing}{\newdimen\linespacing}{}
\AtBeginDocument{\linespacing=\baselineskip\relax}

\def\section{\@startsection{section}{1}\z@
  {.9\linespacing\@plus\linespacing}%
  {.7\linespacing}%
  {\fontsize{12}{14}\selectfont\scshape\centering}}

\def\subsection{\@startsection{subsection}{2}\z@
  {.7\linespacing\@plus.7\linespacing}%
  {.5\linespacing}%
  {\fontsize{11}{13}\selectfont\bfseries}}

\def\subsubsection{\@startsection{subsubsection}{3}\z@
  {.5\linespacing\@plus.5\linespacing}%
  {.3\linespacing}%
  {\normalfont\itshape}}

\makeatother

\newtheorem{theorem}{Theorem}[section]
\numberwithin{equation}{section}
\newtheorem{proposition}[theorem]{Proposition}
\newtheorem{definition}[theorem]{Definition}

\newtheorem{remark}[theorem]{Remark}
\newtheorem{lemma}[theorem]{Lemma}

\newtheorem{algorithm}[theorem]{Algorithm}
\newtheorem{assumption}[theorem]{Assumption}

\usepackage[textsize=small]{todonotes}
\title{Duality Framework for Flux Constrained Flow in Porous Media: Analysis and Numerics
\thanks{This work is partially supported by the Office of Naval Research (ONR) under Award NO: N00014-24-1-2147. NSF grant DMS-2408877, and the Air Force Office of Scientific Research (AFOSR) under Award NO: FA9550-25-1-0231.}}
	\author[1]{Harbir Antil\thanks{Email: \texttt{hantil@gmu.edu}}}	
	\author[2]{Keegan L.A. Kirk\thanks{Email: \texttt{kkirk6@gmu.edu}}}
        \author[3]{Felipe P\'erez\thanks{Email: \texttt{fperezsi@gmu.edu}}}
	\date{}
	\affil[1,2]{\small{Department of Mathematical Sciences and the Center for Mathematics and Artificial Intelligence (CMAI), George Mason University, Fairfax, VA 22030, USA.}}

\begin{document}

\maketitle

\vspace{-1cm}

\begin{abstract}
We introduce and analyze Darcy flow through a saturated porous medium subject to bilateral constraints on the normal flux across a portion of the boundary. The problem is posed as the maximization of a velocity-based dual concave energy over a convex subset of $H(\mathrm{div};\Omega)$; Fenchel duality identifies a pressure-based predual formulation, yields strong duality, and provides convex optimality conditions with a complementarity structure on the constrained boundary. The primal--dual gap satisfies an a posteriori error identity, free of generic constants, valid for arbitrary admissible approximations. The duality structure is inherited by a Raviart--Thomas/Crouzeix--Raviart discretization, from which we derive a discrete error identity and a priori error decay rates under fractional regularity assumptions on the solution and the flux bounds.  Numerical experiments, including adaptive refinement driven by localized primal--dual gap indicators, support the theory.
\end{abstract}

\medskip

\section{Introduction}\label{sec:introduction}

\subsection{Setting and motivation}

Let $\Omega \subset \mathbb{R}^{d}$, $d \in \cbr{2,3}$, be an open, bounded, polyhedral Lipschitz domain whose boundary splits as $\partial\Omega=\overline{\Gamma_{\!D}}\cup\overline{\Gamma_{\!C}}$ into relatively open, disjoint parts $\Gamma_{\!D}$ (Dirichlet) and $\Gamma_{\!C}$ (constraints). We consider the Darcy system for the velocity $\bm u$ and the pressure $p$, subject to a bilateral constraint on the normal flux across $\Gamma_C$:
\begin{subequations}\label{eq:Darcy}
\begin{align}
    {\bm K}^{-1} \bm u + \nabla p      &= 0 \quad \mbox{in } \Omega, \label{eq:Darcy_a}\\
    \mbox{div }\bm u  &= f \quad \mbox{in } \Omega, \label{eq:Darcy_b}\\
   \alpha \le \bm u \cdot \bm n &\le \beta \quad \mbox{on } \Gamma_C,  \label{eq:Darcy_c}\\
    p &= p_D \quad \mbox{on } \Gamma_D. \label{eq:Darcy_d}
\end{align}
\end{subequations}
Here, $f \in L^2(\Omega)$ is a source, $p_D \in H^{\frac12}(\Gamma_D)$ a prescribed boundary pressure, $\bm K$ a symmetric, uniformly elliptic permeability tensor, and $\alpha,\beta \in L^2(\Gamma_C)$ satisfy $\alpha \le \beta$ a.e.\ on $\Gamma_C$. Equations \eqref{eq:Darcy_a}--\eqref{eq:Darcy_b} model the motion of an incompressible fluid through a saturated porous medium, arising in groundwater transport, reservoir simulation, and filtration; their mixed finite element approximation is classical, see e.g. the monographs \cite{Boffi:book,EG21II}. The distinctive feature of \eqref{eq:Darcy} is the boundary condition \eqref{eq:Darcy_c}: on $\Gamma_C$, the normal flux is neither prescribed nor free, but confined to the window $[\alpha,\beta]$. Where $\alpha=\beta$, condition \eqref{eq:Darcy_c} reduces to a Neumann condition with $L^2$ data; where $\alpha<\beta$, the active flux bound at each boundary point is not known a priori.

\begin{remark}[Compatibility condition]
\label{rmk:compat}
In the pure flux case $\Gamma_D=\emptyset$, integrating $\mathrm{div } \, \bm u=f$ over $\Omega$ and applying the divergence theorem yields

\vspace{-5mm}
\begin{align} \label{eq:compatibility}
    \int_{\Gamma_C} \alpha \dif s \le \int_{\Omega} f \dif x  \le \int_{\Gamma_C} \beta \dif s.
\end{align}
\vspace{-5mm}

\noindent
If $\alpha=\beta:=g$, the constraint reduces to  $u\cdot n=g$ on $\Gamma_C$, we recover the usual solvability condition for the Neumann problem
\vspace{-5mm}
 
\begin{align} \label{eq:neumann_compatibility}
\int_\Omega f\,\dif x = \int_{\partial\Omega}  g\,\dif s.
\end{align}
\vspace{-5mm}

\noindent
Note that, in this case, the pressure is only determined up to an additive constant.
\end{remark}

One-sided versions of \eqref{eq:Darcy_c} arise in the unconfined seepage problem, where a complementarity condition of Signorini type \cite{KinderlehrerStampacchia80,Rodrigues87} is imposed on the potential seepage face \cite{ZhengDaiLiu:2009,AlnashriDroniou:2018}, and in the semipermeable membranes of \cite[Ch.~I]{DuvautLions76}, which admit flow in one direction only (see \cite{HanHuangWangXu19} for a nonmonotone variant). The bilateral condition \eqref{eq:Darcy_c} prescribes both a minimal and a maximal admissible throughflow, as is natural for boundaries of limited conveyance capacity such as drains or partially sealing faults. It differs from the bilateral boundary obstacle problems studied in \cite{ArkhipovaUraltseva89}, which confine the trace rather than the normal flux. To the best of our knowledge, problem \eqref{eq:Darcy} has not been analyzed in the literature.

Our analysis rests on convex duality. We formulate \eqref{eq:Darcy} as the maximization of a dual, velocity-based concave energy over a convex subset of $H(\mathrm{div};\Omega)$, identify the pressure-based formulation as its Fenchel predual, and transfer the duality structure to a discretization by the lowest-order Raviart--Thomas element \cite{RT77} for the velocity and the Crouzeix--Raviart element \cite{CR73} for the pressure. This approach follows the framework of Bartels and Kaltenbach \cite{BartelsKaltenbach24,BartelsKaltenbach26}, which builds on orthogonality relations between the Crouzeix--Raviart and Raviart--Thomas spaces \cite{BartelsWang21} and has been applied to the scalar Signorini problem \cite{BartelsGudiKaltenbach25}, the obstacle problem \cite{BartelsKaltenbach25b}, gradient constraints \cite{AntilBartelsKaltenbachKhandelwal25}, and optimal insulation \cite{AntilKaltenbachKirk26}. Building on these developments, we extend the primal--dual analysis to bilateral flux constraints and derive an exact error identity, free of generic constants, that forms the basis of both the a priori error analysis and the computable a posteriori estimator.

\subsection{Contributions}
The main contributions of this paper are the following:
\begin{enumerate}[noitemsep,topsep=2pt]
    \item \textbf{A new PDE model and its well-posedness.} We formulate the bilateral flux-constrained Darcy problem as the maximization of a dual energy $D$ over a convex subset of $H(\mathrm{div};\Omega)$ and prove existence and uniqueness of a maximizer.
    \item \textbf{Fenchel duality and optimality conditions.} Through convex duality, we identify a primal, pressure-based formulation as the minimization of a convex energy $I$, prove strong duality, and derive convex optimality conditions, including a complementarity relation coupling normal flux and pressure on $\Gamma_C$.
    \item \textbf{An exact a posteriori error identity.} We prove that the primal-dual gap estimator $\eta_{\mathrm{gap}}^2(q,\bm v) = I(q) - D(\bm v)$ coincides, for arbitrary admissible pairs, with the total error measure built from optimal convexity measures of the two energies; the identity involves no generic constants.
    \item \textbf{A priori convergence analysis.} We establish a discrete strong-duality identity for the Raviart--Thomas/Crouzeix--Raviart discretization  and derive from the resulting discrete gap identity  convergence under minimal regularity and explicit error decay rates under fractional regularity assumptions on the solution and the flux bounds.
    \item \textbf{Numerical algorithm.} We characterize the discrete dual solution by a KKT system with facetwise multipliers, solve it by a semismooth Newton method, recover the discrete primal solution by a generalized inverse Marini formula requiring no additional linear solve, and employ the discrete primal-dual gap as a stopping criterion.
\end{enumerate}

\subsection{Outline}

The remainder of the paper is organized as follows. In \Cref{sec:preliminaries}, we introduce the notation and the relevant function spaces and finite element spaces. In \Cref{sec:flux-constrained}, a Fenchel duality theory for the continuous problem is developed (\Cref{thm:cts-dual-exist,lem:strong-duality}), which is used in \Cref{sec:cts-aposteriori} to derive an exact a posteriori error identity (\Cref{thm:aposteriori-id}). In \Cref{sec:discrete-flux-constrained}, a discrete Fenchel duality theory based on the Raviart--Thomas and Crouzeix--Raviart elements is developed (\Cref{thm:disc-dual-exist,thm:disc-strong-duality}), which is used in \Cref{ssec:disc-apriori} to derive a discrete error identity (\Cref{thm:disc-aposteriori-id}) and a priori error decay rates under fractional regularity assumptions (\Cref{thm:disc-apriori}). In \Cref{Numerical-algorithm}, the discrete dual problem is solved by a semismooth Newton method (\Cref{alg:ssn}) and the discrete primal solution is recovered by a generalized inverse Marini formula (\Cref{lem:marini}). In \Cref{sec:Numerical-results}, we carry out numerical experiments that support these findings and present an application to miscible displacement in an SPE10 benchmark reservoir.
\section{Preliminaries}\label{sec:preliminaries}
This section introduces the notation and collects a number of preliminary results. \Cref{sec:prelim-domain} is concerned with the relevant function spaces and the trace machinery needed to formulate the bilateral flux constraint, while \Cref{sec:prelim-mesh} is concerned with the relevant finite element spaces and the projection and quasi-interpolation operators employed throughout the paper.

\subsection{Classical function spaces}
\label{sec:prelim-domain}

Throughout the paper, $\Omega \subset \mathbb{R}^{d}$, $d\in\{2,3\}$, is a
bounded Lipschitz polyhedral domain with outward unit normal
$\bm{n}:\partial\Omega\to\mathbb{S}^{d-1}$ defined
$\mathcal H^{d-1}$-almost everywhere on $\partial\Omega$, where
$\mathcal H^{d-1}$ denotes the $(d-1)$-dimensional Hausdorff measure.
For $k\in\mathbb{N}_{0}$ and $p\in[1,\infty]$, we denote by $W^{k,p}(\Omega)$
the Sobolev space of $p$-integrable functions with $p$-integrable weak
derivatives up to order $k$, with the convention
$W^{0,p}(\Omega):=L^{p}(\Omega)$; in the Hilbertian case we write
$H^{k}(\Omega):=W^{k,2}(\Omega)$. For non-integer $s>0$, the fractional
Sobolev space $H^{s}(\Omega)$ is defined through the standard Slobodeckij
norm \cite[Ch. 2]{EG21I}. The $L^{2}(\Omega)$ inner product
is denoted by $(\cdot,\cdot)_{\Omega}$. For an open subset $\omega$ of
$\partial\Omega$ or of a lower-dimensional skeleton, we write
$\langle\cdot,\cdot\rangle_{\omega}$ for the duality pairing on $\omega$ and,
when both arguments belong to $L^{2}(\omega)$, for the $L^{2}(\omega)$ inner
product with which the pairing then coincides; whether a given
$\langle\cdot,\cdot\rangle_{\omega}$ denotes a duality pairing or an inner
product will be clear from the regularity of its arguments. Vector- and matrix-valued
analogues are written
$(L^{p}(\Omega))^{d}$, $(H^{s}(\Omega))^{d}$, $(L^{p}(\Omega))^{d \times d}$, $(H^{s}(\Omega))^{d \times d}$, etc.; inner products and
norms on these spaces are taken componentwise and use the same symbols
when no confusion can arise. We denote by $H(\textup{div};\Omega)$ the
space of vector fields in $(L^{2}(\Omega))^{d}$ with square-integrable
weak divergence, endowed with the usual graph norm.

 \subsubsection{Scalar trace operators}
The trace operator
$\gamma: H^{1}(\Omega)\to H^{\frac{1}{2}}(\partial\Omega)$ is bounded and
surjective \cite[Ch.~3]{EG21I}. For $X\in\{D,C\}$, we denote by
\begin{equation}
  \label{eq:restriction-half}
  H^{\frac{1}{2}}(\Gamma_{\!X})
  \;:=\;\bigl\{\gamma(w)\big|_{\Gamma_{\!X}}\;:\;w\in H^{1}(\Omega)\bigr\}
\end{equation}
the space of traces on $\Gamma_{\!X}$, equipped with the quotient norm
inherited from the surjection $H^{1}(\Omega)\to H^{\frac{1}{2}}(\Gamma_{\!X})$.
Since $\Omega$ is Lipschitz, every $v \in H^{\frac{1}{2}}(\Gamma_{D})$ extends
to an element of $H^{\frac{1}{2}}(\partial\Omega)$ and hence, by
\cite[Thm.~3.10]{EG21I}, admits a bounded lifting
$\widehat{v} \in H^{1}(\Omega)$ with $\gamma(\widehat{v})|_{\Gamma_{D}}=v$.
The closed subspace of $H^{\frac{1}{2}}(\Gamma_{\!X})$ consisting of those
elements whose extension by zero outside $\Gamma_{\!X}$ remains in
$H^{\frac{1}{2}}(\partial\Omega)$ is the Lions--Magenes space
\cite[Ch.~1]{LionsMagenes72},
\begin{equation}
  \label{eq:LM-half}
  H^{\frac{1}{2}}_{00}(\Gamma_{\!X})
  \;:=\;\bigl\{\eta\in H^{\frac{1}{2}}(\Gamma_{\!X})\;:\;
     E_{0}^{X}\eta\in H^{\frac{1}{2}}(\partial\Omega)\bigr\},
\end{equation}
where $E_{0}^{X}\colon H^{\frac{1}{2}}(\Gamma_{\!X})\to L^{2}(\partial\Omega)$ 
denotes extension by zero outside $\Gamma_{\!X}$. We denote the topological duals of $H^{\frac{1}{2}}(\Gamma_X)$
and $H_{00}^{\frac{1}{2}}(\Gamma_X)$ by $H^{-\frac{1}{2}}(\Gamma_X)$ and
$H_{00}^{-\frac{1}{2}}(\Gamma_X)$, respectively; both
$H^{\frac{1}{2}}(\Gamma_X)$ and $H^{\frac{1}{2}}_{00}(\Gamma_X)$ form
Gelfand triples with pivot space $L^{2}(\Gamma_{X})$.

\subsubsection{Normal trace operators}

Every $\bm{v}\in H(\textup{div};\Omega)$ admits a normal trace
$\gamma_{\bm{n}}(\bm{v})\in H^{-\frac{1}{2}}(\partial\Omega)$, characterized
by the Green formula
\begin{equation}
  \label{eq:green-full-boundary}
  \langle\gamma_{\bm{n}}(\bm{v}),\gamma(w)\rangle_{\partial\Omega}
  \;=\;(\bm{v},\nabla w)_{\Omega}+(\textup{div}\,\bm{v},w)_{\Omega}
  \qquad\forall\,w\in H^{1}(\Omega),
\end{equation}
and the map $\gamma_{\bm{n}}:H(\textup{div};\Omega)\to
H^{-\frac{1}{2}}(\partial\Omega)$ is linear, bounded, and surjective;
see \cite[Thm.~4.15]{EG21I}. For $\bm{v}\in H(\textup{div};\Omega)$, we define
the restriction of its normal trace to $\Gamma_{\!X}$, $X\in\{D,C\}$, by
duality against test functions in $H_{00}^{\frac{1}{2}}(\Gamma_X)$:
\begin{equation}
  \label{eq:normal-trace-restriction}
  \bigl\langle\gamma_{\bm{n}}(\bm{v})|_{\Gamma_{\!X}},\eta\bigr\rangle_{\Gamma_{\!X}}
  \;:=\;\bigl\langle\gamma_{\bm{n}}(\bm{v}),E_{0}^{X}\eta\bigr\rangle_{\partial\Omega},
  \qquad\forall\,\eta\in H^{\frac{1}{2}}_{00}(\Gamma_{\!X}),
\end{equation}
which defines $\gamma_{\bm{n}}(\bm{v})|_{\Gamma_{\!X}}$ as an element of
$H^{-\frac{1}{2}}_{00}(\Gamma_{\!X})$. Whenever
$\gamma_{\bm{n}}(\bm{v})|_{\Gamma_{X}}\in L^{2}(\Gamma_{X})$, the
pairing~\eqref{eq:normal-trace-restriction} coincides with the
$L^{2}(\Gamma_{X})$ inner product by density of
$H^{\frac{1}{2}}_{00}(\Gamma_{X})$ in $L^{2}(\Gamma_{X})$.

To make the bilateral flux constraint $\alpha\le\bm{v} \cdot \bm n\le\beta$
meaningful in a pointwise a.e.\ sense on $\Gamma_{C}$,
we work in the subspace of $H(\textup{div};\Omega)$ on which the normal trace
admits an $L^{2}(\Gamma_{C})$-representative,
\begin{equation}
  \label{eq:def-V}
  V\;:=\;\bigl\{\bm{v}\in H(\textup{div};\Omega)\;:\;
     \gamma_{\bm{n}}(\bm{v})|_{\Gamma_{\!C}}\in L^{2}(\Gamma_{\!C})\bigr\},
\end{equation}
where the inclusion $\gamma_{\bm{n}}(\bm{v})|_{\Gamma_{\!C}}\in
L^{2}(\Gamma_{\!C})$ is understood in the sense that there exists
$\mu\in L^{2}(\Gamma_{\!C})$ such that the
duality~\eqref{eq:normal-trace-restriction} reduces to the $L^{2}$
pairing $\langle \mu,\eta\rangle_{\Gamma_{\!C}}$ for every
$\eta\in H^{\frac{1}{2}}_{00}(\Gamma_{\!C})$. We identify
$\gamma_{\bm{n}}(\bm{v})|_{\Gamma_{\!C}}$ with $\mu$ and equip $V$ with the graph
norm
\begin{equation}
  \label{eq:V-norm}
  \|\bm{v}\|_{V}^{2}
  \;:=\;\|\bm{v}\|_{H(\textup{div})}^{2}
     +\|\gamma_{\bm{n}}(\bm{v})|_{\Gamma_{\!C}}\|_{L^{2}(\Gamma_{\!C})}^{2}.
\end{equation}

\begin{proposition}
\label{prop:V-Hilbert}
The space $(V,\|\cdot\|_{V})$ is a Hilbert space.
\end{proposition}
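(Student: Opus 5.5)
The plan is to check three things: that $\|\cdot\|_V$ comes from an inner product, that the representative $\mu$ in \eqref{eq:def-V} is uniquely determined (so the norm is well defined), and that $V$ is complete. The natural inner product is
\begin{equation*}
(\bm v,\bm w)_V := (\bm v,\bm w)_{\Omega} + (\textup{div}\,\bm v,\textup{div}\,\bm w)_{\Omega} + \langle \gamma_{\bm n}(\bm v)|_{\Gamma_C},\gamma_{\bm n}(\bm w)|_{\Gamma_C}\rangle_{\Gamma_C}.
\end{equation*}
Its bilinearity rests on linearity of $\bm v\mapsto\mu$. Uniqueness of $\mu$ follows from density of $H^{\frac12}_{00}(\Gamma_C)$ in $L^2(\Gamma_C)$: if two $L^2$ functions induce the same functional on $H^{\frac12}_{00}(\Gamma_C)$, they coincide. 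The same argument gives linearity, since $\mu_{\bm v}+\lambda\mu_{\bm w}$ represents $\gamma_{\bm n}(\bm v+\lambda\bm w)|_{\Gamma_C}$. Positive definiteness is inherited from the $H(\textup{div})$ part.

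For completeness, take a Cauchy sequence $(\bm v_k)$ in $V$ and write $\mu_k:=\gamma_{\bm n}(\bm v_k)|_{\Gamma_C}$. By the definition \eqref{eq:V-norm} of the norm, $(\bm v_k)$ is Cauchy in $H(\textup{div};\Omega)$ and $(\mu_k)$ is Cauchy in $L^2(\Gamma_C)$. Both spaces are complete, so $\bm v_k\to\bm v$ in $H(\textup{div};\Omega)$ and $\mu_k\to\mu$ in $L^2(\Gamma_C)$. The key step is to identify $\mu$ as the normal trace of $\bm v$ on $\Gamma_C$. Fix $\eta\in H^{\frac12}_{00}(\Gamma_C)$. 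By \eqref{eq:LM-half}, $E_0^C\eta\in H^{\frac12}(\partial\Omega)$, and by boundedness of $\gamma_{\bm n}$ from \cite[Thm.~4.15]{EG21I},
\begin{equation*}
\langle\gamma_{\bm n}(\bm v_k),E_0^C\eta\rangle_{\partial\Omega}\to\langle\gamma_{\bm n}(\bm v),E_0^C\eta\rangle_{\partial\Omega}.
\end{equation*}
By \eqref{eq:normal-trace-restriction}, the left-hand side equals $\langle\mu_k,\eta\rangle_{\Gamma_C}$, which converges to $\langle\mu,\eta\rangle_{\Gamma_C}$ by $L^2$ convergence, since $\eta\in L^2(\Gamma_C)$. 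Hence $\langle\gamma_{\bm n}(\bm v)|_{\Gamma_C},\eta\rangle_{\Gamma_C}=\langle\mu,\eta\rangle_{\Gamma_C}$ for all such $\eta$. Therefore $\bm v\in V$ with $\gamma_{\bm n}(\bm v)|_{\Gamma_C}=\mu$, and $\|\bm v_k-\bm v\|_V\to0$.

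The main obstacle is this identification of the limit, which is a closed-graph-type argument: the graph of $\bm v\mapsto\gamma_{\bm n}(\bm v)|_{\Gamma_C}$, viewed from $H(\textup{div})$ into $L^2(\Gamma_C)$, must be closed. It goes through once one knows that $\gamma_{\bm n}(\cdot)|_{\Gamma_C}$ is continuous into $H^{-\frac12}_{00}(\Gamma_C)$ and that $L^2(\Gamma_C)$ embeds injectively there, which is exactly the Gelfand triple and density remark. Equivalently, $V$ is isometric to the closed graph of this operator inside $H(\textup{div};\Omega)\times L^2(\Gamma_C)$, and the closed graph is a Hilbert space.
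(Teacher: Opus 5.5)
Your proof is correct and follows the paper's argument: you obtain limits $\bm v_k\to\bm v$ in $H(\textup{div};\Omega)$ and $\mu_k\to\mu$ in $L^2(\Gamma_C)$, then identify $\mu$ with $\gamma_{\bm n}(\bm v)|_{\Gamma_C}$ by testing against $E_0^C\eta$, $\eta\in H^{\frac12}_{00}(\Gamma_C)$, using continuity of $\gamma_{\bm n}$ into $H^{-\frac12}(\partial\Omega)$. Your extra check that $\mu$ is uniquely determined and depends linearly on $\bm v$, so that $\|\cdot\|_V$ really comes from an inner product, fills in a point the paper leaves implicit. You also conclude $\bm v\in V$ directly from the definition of $V$, where the paper instead uses an extension-plus-Riesz step.
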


\begin{proof}
Let $(\bm{v}_{j})_{j\in\mathbb{N}}\subset V$ be a Cauchy sequence in $V$. It is easily shown that $\bm{v}_{j}\to\bm{v}\ \text{in}\ H(\textup{div};\Omega)$ and $\gamma_{\bm{n}}(\bm{v}_{j})|_{\Gamma_{\!C}}\to\mu\ \text{in}\ L^{2}(\Gamma_{\!C})$.
It remains to prove $\bm{v}\in V$ and
$\gamma_{\bm{n}}(\bm{v})|_{\Gamma_{\!C}}=\mu$ in $L^{2}(\Gamma_{\!C})$.
By the continuity of $\gamma_{\bm{n}}:H(\textup{div};\Omega)\to
H^{-\frac{1}{2}}(\partial\Omega)$ we have $\gamma_{\bm{n}}(\bm{v}_{j})\to
\gamma_{\bm{n}}(\bm{v})$ in $H^{-\frac{1}{2}}(\partial\Omega)$.
Therefore,
\begin{equation*}
  \bigl\langle\gamma_{\bm{n}}(\bm{v})|_{\Gamma_{\!C}},\eta\bigr\rangle_{\Gamma_{\!C}} =
\lim_{j\to\infty}\bigl\langle\gamma_{\bm{n}}(\bm{v}_{j}),E_{0}^{C}\eta\bigr\rangle_{\partial\Omega}
  \;=\;\lim_{j\to\infty}\bigl\langle \gamma_{\bm{n}}(\bm{v}_{j})|_{\Gamma_{C}},\eta\bigr\rangle_{\Gamma_{C}}
  \;=\;\langle \mu,\eta\rangle_{\Gamma_{C}}, \quad \forall \eta \in H_{00}^{\frac{1}{2}}(\Gamma_C).
\end{equation*}
Since  $H^{\frac{1}{2}}_{00}(\Gamma_{C}) \hookrightarrow L^{2}(\Gamma_{C})$ densely, the map
$\eta\mapsto\langle\gamma_{\bm{n}}(\bm{v})|_{\Gamma_{\!C}},\eta\rangle_{\Gamma_{\!C}}$
extends uniquely to a bounded linear functional on $L^{2}(\Gamma_{\!C})$ that coincides
with $\langle \mu,\cdot\rangle_{\Gamma_{C}}$. By the Riesz representation
theorem, $\mu$ is the $L^{2}$-representative of
$\gamma_{\bm{n}}(\bm{v})|_{\Gamma_{C}}$. In particular $\bm{v}\in V$ and
$\|\bm{v}_{j}-\bm{v}\|_{V}\to0$.
\end{proof}
For the remainder of the article we write
$\bm v \cdot \bm n:=\gamma_{\bm{n}}(\bm{v})$ and denote by $\bm v\cdot \bm n|_{\Gamma_{X}}$ its restriction to
$\Gamma_{\!X}\subset\partial\Omega$ in the sense of
\eqref{eq:normal-trace-restriction} when no ambiguity arises.
In the sequel, we must pair the Dirichlet datum $p_D \in H^{\frac{1}{2}}(\Gamma_D)$ with the restricted normal trace
$\bm v\cdot\bm n|_{\Gamma_D}\in H^{-\frac{1}{2}}_{00}(\Gamma_D)$ of fields $\bm v \in V$. As these spaces are
not in duality with each other, we give the pairing a meaning via a lifting. 
\begin{definition}[Dirichlet boundary pairing]\label{def:pD-pairing}
For $\bm v \in V$ satisfying $\textup{div} \, \bm v = f$, we define a pairing on $\Gamma_D$ by setting
\begin{align}\label{eq:pD-pairing}
    \langle p_D,\,\bm v\cdot \bm n\rangle_{\Gamma_D}
    &:= (\bm v,\nabla\widehat p_D)_\Omega
       + (f,\widehat p_D)_\Omega
       - \langle  \bm v\cdot \bm n,\widehat p_D\rangle_{\Gamma_C},
\end{align}
where $\widehat p_D \in H^{1}(\Omega)$ is any lifting of $p_D$. The right-hand side of~\eqref{eq:pD-pairing} does not depend on the choice of lifting.
\end{definition}

\begin{remark}[Compatibility with the classical pairing]\label{rem:pD-classical}
If, in addition, $\bm v\cdot\bm n|_{\Gamma_D}\in L^2(\Gamma_D)$, the pairing
$\langle p_D,\bm v\cdot\bm n\rangle_{\Gamma_D}$ coincides with the $L^2(\Gamma_D)$ inner product. Moreover, if $p_D\in H_{00}^{1/2}(\Gamma_D)$, the pairing is instead realized through the restriction~\eqref{eq:normal-trace-restriction} of the normal trace to $\Gamma_D$.
\end{remark}

\subsection{Finite element spaces}
\label{sec:prelim-mesh}

Let $(\mathcal T_{h})_{h>0}$ be a family of conforming simplicial triangulations of $\Omega$ assumed to be shape-regular in the sense of Ciarlet (see e.g., \cite{Ciarlet:FEMbook, EG21I}) indexed by the mesh-size
$h:=\max_{T\in\mathcal T_{h}}h_{T}$, where $h_{T}:=\text{diam}(T)$ for every
$T\in\mathcal T_{h}$. 
For each $T\in\mathcal T_{h}$, we denote by $\bm{n}_{T}:\partial
T\to\mathbb{S}^{d-1}$ its outward unit normal, by $|T|$ its $d$-dimensional
Lebesgue measure, and by $x_{T}$ its barycenter. 
 The set of facets 
$\mathcal T_{h}$ is denoted by $\mathcal S_{h}$ and splits into
interior and boundary facets,
\begin{align*}
  \mathcal S_{h}^{i}
  &:=\bigl\{T\cap T'\;:\;T,T'\in\mathcal T_{h},\
     \mathrm{dim}_{\mathcal H}(T\cap T')=d-1\bigr\},\\
  \mathcal S_{h}^{\partial}
  &:=\bigl\{T\cap\partial\Omega\;:\;T\in\mathcal T_{h},\
     \mathrm{dim}_{\mathcal H}(T\cap\partial\Omega)=d-1\bigr\},
\end{align*}
with $\mathcal S_{h}=\mathcal S_{h}^{i}\dot\cup\mathcal S_{h}^{\partial}$,
where $\mathrm{dim}_{\mathcal H}$ denotes the Hausdorff dimension. We assume $(\mathcal S_{h}^\partial)_{h>0}$ is such that every boundary facet is
contained in exactly one of the closures of $\Gamma_{D}$ or
$\Gamma_{C}$, so that
\begin{equation}
  \label{eq:boundary-facets}
  \mathcal S_{h}^{\partial}
  \;=\;\mathcal S_{h}^{D}\,\dot\cup\,\mathcal S_{h}^{C},
  \qquad
  \mathcal S_{h}^{X}
  :=\{S\in\mathcal S_{h}^{\partial}\;:\;\mathrm{int}(S)\subset\Gamma_{\!X}\},
  \quad X\in\{D,C\}.
\end{equation}
For each facet $S\in\mathcal S_{h}$, we denote by $\bm{n}_{S}:
S\to\mathbb{S}^{d-1}$ its outward unit normal, $h_{S}:=\text{diam}(S)$, and $|S|$ for its
$(d-1)$-dimensional Hausdorff measure.
For every $S\in\mathcal S_{h}$, we denote by $x_{S}$ the barycenter of
$S$.
For $s\ge 0$ and $p\in[1,\infty]$, the broken Sobolev space subordinate to
$\mathcal T_h$ is
\begin{equation*}
  W^{s,p}(\mathcal T_h)
  := \cbr[1]{v\in L^{p}(\Omega)\,:\, v|_{T}\in W^{s,p}(T)
     \text{ for all } T\in\mathcal T_h},
\end{equation*}
where, for non-integer $s$, $W^{s,p}(T)$ denotes the Sobolev--Slobodeckij
space; in the Hilbertian case we write
$H^{s}(\mathcal T_h):=W^{s,2}(\mathcal T_h)$. Broken
Sobolev spaces subordinate to a collection of facets
$\mathcal F\subseteq\mathcal S_h$ (in particular
$\mathcal F=\mathcal S_h^{C}$) are defined analogously: for $t\ge 0$,
\begin{equation*}
  H^{t}(\mathcal F)
  := \cbr[1]{v\in L^{2}(\Sigma_{\mathcal F})\,:\, v|_{S}\in H^{t}(S)
     \text{ for all } S\in\mathcal F},
\end{equation*}
where $\Sigma_{\mathcal F}:=\mathrm{int}\bigl(\bigcup_{S\in\mathcal F}\overline S\bigr)$. Vector- and matrix-valued analogues, e.g.
$(H^{s}(\mathcal T_h))^{d}$ and $(W^{1,\infty}(\mathcal T_h))^{d\times d}$,
are defined componentwise with the same notational conventions. For more details, see \cite[Ch. 1]{Pietro:book}.

\subsubsection{Broken polynomial spaces}
For $k\in \cbr{0,1}$ and $T\in\mathcal T_{h}$ (resp. $S \in \mathcal{S}_h$), let $\mathbb{P}^{k}(T)$ (resp. $\mathbb{P}^{k}(S)$) be the
space of polynomials of total degree at most $k$ on $T$ (resp. $S$). The
corresponding broken polynomial spaces are
\begin{align*}
  \mathcal{L}_h^{k}(\mathcal T_{h})
  \;&:=\;\{v_{h}\in L^{\infty}(\Omega)\;:\;v_{h}|_{T}\in\mathbb{P}^{k}(T)
     \text{ for all }T\in\mathcal T_{h}\} \\
  \mathcal{L}_h^{k}(\mathcal S_{h})
  \;&:=\;\{v_{h}\in L^{\infty}(\Gamma_X)\;:\;v_{h}|_{S}\in\mathbb{P}^{k}(T)
     \text{ for all }S\in\mathcal S_{h}\} .
\end{align*}
We define broken polynomial spaces on collections of facets analogously, with the obvious modifications.
Vector- and matrix-valued analogues 
are defined component-wise.
For $q_{h}\in\mathcal{L}^{n}(\mathcal T_{h})$, $n\in\mathbb{N}_{0}$, and
$S\in\mathcal S_{h}$, the \emph{jump} of $q_{h}$ across $S$ is
\begin{equation*}
  \llbracket q_{h}\rrbracket_{S}
  \;:=\;\begin{cases}
    q_{h}|_{T_{+}}-q_{h}|_{T_{-}}
    & \text{if } S\in\mathcal S_{h}^{i},\ \partial T_{+}\cap\partial T_{-}=S,\\
    q_{h}|_{T}
    & \text{if } S\in\mathcal S_{h}^{\partial},\ S\subset\partial T,
  \end{cases}
\end{equation*}
where the labels $T_{\pm}$ are fixed (but otherwise arbitrary) on each
interior facet. For $\bm{v}_{h}\in(\mathcal{L}^{k}(\mathcal T_{h}))^{d}$,
$k\in\mathbb{N}_{0}$, and $S\in\mathcal S_{h}$, the \emph{normal jump} is
\begin{equation*}
  \llbracket\bm{v}_{h}\cdot\bm{n}\rrbracket_{S}
  \;:=\;\begin{cases}
    \bm{v}_{h}|_{T_{+}}\cdot\bm{n}_{T_{+}}+\bm{v}_{h}|_{T_{-}}\cdot\bm{n}_{T_{-}}
    & \text{if } S\in\mathcal S_{h}^{i},\ \partial T_{+}\cap\partial T_{-}=S,\\
    \bm{v}_{h}|_{T}\cdot\bm{n}_{T}
    & \text{if } S\in\mathcal S_{h}^{\partial},\ S\subset\partial T.
  \end{cases}
\end{equation*}
The broken gradient
$\nabla_{h}:\mathcal{L}^{1}(\mathcal T_{h})\to(\mathcal{L}^{0}(\mathcal T_{h}))^{d}$, is defined by
$(\nabla_{h}v_{h})|_{T}:=\nabla(v_{h}|_{T})$ for every
$T\in\mathcal T_{h}$.

We collect here the three local $L^2$-projections used throughout, together with
their approximation properties.
We denote by $\Pi_h$, $\pi_h$, and $\pi_h^1$ the local
$L^2$-projections onto $\mathcal L_h^0(\mathcal T_h)$ (elementwise),
$\mathcal L_h^0(\mathcal S_h)$ (facetwise), and $\mathcal L_h^1(\mathcal S_h^C)$
(facetwise), respectively; see, e.g., \cite{Pietro:book,EG21I}.
There is a constant $C>0$, depending only on the shape-regularity of $\mathcal{T}_h$, such that
\begin{subequations}\label{eq:proj-approx}
\begin{alignat}{2}
   \|v-\Pi_h v\|_{L^2(T)}
   &\le C\,h_T^{r}\,|v|_{H^{t}(T)},
   &&\qquad v\in H^{r}(T),\ t\in[0,1],\label{eq:proj-approx-T}\\
   \|v-\pi_h v\|_{L^2(S)}
   &\le C\,h_S^{r}\,|v|_{H^{r}(S)},
   &&\qquad v\in H^{r}(S),\ r\in[0,1],\label{eq:proj-approx-S}\\
   \|v-\pi_h^1 v\|_{L^2(S)}
   &\le C\,h_S^{r}\,|v|_{H^{r}(S)},
   &&\qquad v\in H^{r}(S),\ r\in[0,2],\label{eq:proj-approx-S1}\\
   \|v-\Pi_h v\|_{L^\infty(T)}
   &\le C\,h_T^{r}\,|v|_{W^{r,\infty}(T)},
   &&\qquad v\in W^{r,\infty}(T),\ r\in[0,1].\label{eq:proj-Linfty}
\end{alignat}
\end{subequations}
These estimates hold componentwise for vector- and matrix-valued arguments.
Since $\mathbb P^0(S)\subset\mathbb P^1(S)$, 
\begin{equation}\label{eq:sneak_in_P1_projection}
   \pi_h\circ\pi_h^1=\pi_h\qquad\text{on }L^1(\Gamma_C).
\end{equation}
Moreover, denoting by $\nabla_S:=(\mathrm{Id}-\bm n_S\otimes\bm n_S)\nabla$ the tangential
gradient on a given facet $S \in \mathcal{S}_h$, the following approximation properties hold:
\begin{align}
   \|\nabla_S v-\pi_h(\nabla_S v)\|_{L^2(S)}
   &\le c\,h_S^{r}\,|\nabla_S v|_{H^{r}(S)}, & r&\in[0,1],\label{eq:proj-approx-S-grad}\\
   \|\nabla_S(v-\pi_h^1 v)\|_{L^2(S)}
   &\le c\,h_S^{\,r-1}\,|v|_{H^{r}(S)}, & r&\in[1,2].\label{eq:proj-approx-S1-grad}
\end{align}
Estimates~\eqref{eq:proj-approx-T}--\eqref{eq:proj-Linfty},
\eqref{eq:proj-approx-S-grad}, and \eqref{eq:proj-approx-S1-grad}, are classical; see, e.g.,
\cite[Ch.~11 and Rem.~12.19]{EG21I}. 

\begin{lemma}
\label{lem:mean_zero_tangential_gradient}
For every $q\in L^1(\Gamma_C)$ and every $S\in\mathcal S_h^C$, the affine
function $(\pi_h q-\pi_h^1 q)|_S$ has vanishing facet mean and
\begin{equation}\label{eq:mean_zero_identity}
   (\pi_h q-\pi_h^1 q)|_S=-\,\nabla_S(\pi_h^1 q)\cdot(x-x_S).
\end{equation}
\end{lemma}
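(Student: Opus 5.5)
The plan is to argue facet by facet, using only the definitions of the two projections and the fact that an affine function on $S$ equals its value at the barycenter plus its (constant) tangential gradient applied to $x-x_S$. Fix $q\in L^1(\Gamma_C)$ and $S\in\mathcal S_h^C$, and set $p:=(\pi_h^1 q)|_S\in\mathbb P^1(S)$. Since $\pi_h q|_S$ is constant, the difference $(\pi_h q-\pi_h^1 q)|_S$ is affine, and everything reduces to identifying its mean and its tangential gradient.

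\textbf{Step 1: vanishing mean.} By \eqref{eq:sneak_in_P1_projection}, $\pi_h q=\pi_h(\pi_h^1 q)$. On $S$ this reads $\pi_h q|_S=|S|^{-1}\int_S p\,\dif s$, that is, $\pi_h q|_S$ is the facet mean of $p$. Consequently $\int_S(\pi_h q-\pi_h^1 q)\dif s=0$. Alternatively, one may test the defining orthogonality of $\pi_h^1$ with the constant $1\in\mathbb P^1(S)$.

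\textbf{Step 2: the explicit formula.} Since $p$ is affine on the flat facet $S$, we have for all $x\in S$ the exact expansion
\begin{equation*}
p(x)=p(x_S)+\nabla_S p\cdot(x-x_S),
\end{equation*}
where $\nabla_S p$ is constant and tangential. Note that $x-x_S$ is tangent to $S$, so only the tangential part of the gradient enters. Integrating over $S$ and using $\int_S(x-x_S)\dif s=0$, which is the definition of the barycenter, gives $|S|^{-1}\int_S p\,\dif s=p(x_S)$. By Step 1 this equals $\pi_h q|_S$. Subtracting the expansion then yields
\begin{equation*}
(\pi_h q-\pi_h^1 q)|_S=p(x_S)-p(x)=-\nabla_S(\pi_h^1 q)\cdot(x-x_S),
\end{equation*}
which is \eqref{eq:mean_zero_identity}.

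\textbf{Remarks on potential subtleties.} There is no real obstacle here. The only point requiring care is that $\pi_h^1$ is well defined on $L^1(\Gamma_C)$: on each facet it is the $L^2(S)$-orthogonal projection onto the finite-dimensional space $\mathbb P^1(S)\subset L^\infty(S)$, which makes sense for $L^1$ data. A second point is that the identity $\pi_h\circ\pi_h^1=\pi_h$ holds facetwise, because both projections are local. The fact that $p$ is affine in the facet's intrinsic coordinates, so that its tangential gradient is constant, is immediate from $\mathbb P^1(S)$ being the restriction of affine functions to the hyperplane containing $S$.
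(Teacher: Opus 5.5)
Your proof is correct and follows essentially the same route as the paper: both use $\pi_h\circ\pi_h^1=\pi_h$ to obtain the vanishing facet mean, and then the barycenter expansion of an affine function on $S$. The only difference is cosmetic. You expand $\pi_h^1 q$ itself around $x_S$ and subtract. The paper instead expands the difference $w=(\pi_h q-\pi_h^1 q)|_S$ directly and uses that $\pi_h q$ is constant on $S$, so that $\nabla_S w=-\nabla_S\pi_h^1 q$.
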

\begin{proof}
By~\eqref{eq:sneak_in_P1_projection},
$\pi_h(\pi_h q-\pi_h^1 q)=\pi_h q-\pi_h q=0$, so $w:=(\pi_h q-\pi_h^1 q)|_S$ is
affine with zero mean on $S$; since $x_S$ is the barycenter,
$w(x)=\nabla_S w\cdot(x-x_S)$. As $\pi_h q$ is constant on $S$,
$\nabla_S w=-\nabla_S\pi_h^1 q$, which gives~\eqref{eq:mean_zero_identity}.
\end{proof}
\noindent Finally,
we note the following local-average bound (\emph{cf}. \cite[Lem.~8.2.3]{DHHR11}): for any measurable $A\subseteq S$ with
$|A|>0$,
\begin{equation}\label{eq:avg_bound_on_S}
   \|q-\langle q\rangle_A\|_{L^2(S)}
   \le\tfrac{2|S|}{|A|}\,\|q-\pi_h q\|_{L^2(S)},\qquad\forall\,q\in L^2(S),
\end{equation}
where $\langle q\rangle_A:=\tfrac{1}{|A|}\int_A v\,dx$ denotes its average.

\subsubsection{The Crouzeix--Raviart finite element space}
 
The Crouzeix--Raviart space~\cite{CR73} is defined as the space of elementwise affine functions whose facet
averages are single-valued across interior facets: 
\begin{align}
  \label{eq:CR-def}
  \mathcal S^{1,\mathrm{cr}}(\mathcal T_{h})
  &:= \cbr[1]{q_{h}\in\mathcal{L}_h^{1}(\mathcal T_{h})\,:\,
     \pi_{h}\llbracket q_{h}\rrbracket_{S}=0\ \text{for all }
     S\in\mathcal S_{h}^{i}}.
\end{align}
Note that functions in $\mathcal S^{1,\mathrm{cr}}(\mathcal T_{h})$ can equivalently be characterized by continuity at the barycenter of each interior facet $S \in \mathcal{S}_h^i$.
The canonical
basis of the Crouzeix--Raviart space $\mathcal S^{1,\mathrm{cr}}(\mathcal T_{h})$ is furnished by the set of functions $\varphi_S \in S^{1,\mathrm{cr}}(\mathcal T_{h})$ satisfying
$\varphi_{S}(x_{S'})=\delta_{S,S'}$ for $S,S'\in\mathcal S_{h}$. The Crouzeix--Raviart quasi-interpolant is the linear operator
$\Pi_{h}^{\mathrm{cr}}: H^{1}(\Omega)\to
\mathcal S^{1,\mathrm{cr}}(\mathcal T_{h})$ defined by
 \begin{align} \label{eq:CR-qi}
 \Pi_{h}^{\mathrm{cr}} q
  \;:=\;\sum_{S\in\mathcal S_{h}}\langle q\rangle_{S}\,\varphi_{S}.    
 \end{align} 
We also introduce the following subspaces of $\mathcal{S}^{1,cr}(\mathcal{T}_h)$ with vanishing trace:
\begin{align*}
  \mathcal S_0^{1,\mathrm{cr}}(\mathcal T_{h})
  &:= \cbr[1]{q_{h}\in \mathcal{S}^{1,cr}(\mathcal{T}_h)\,:\,
     q_h(x_S) = 0, \text{ for all }
     S\in\mathcal S_{h}^\partial}, \\
     \mathcal S_D^{1,\mathrm{cr}}(\mathcal T_{h})
  &:= \cbr[1]{q_{h}\in \mathcal{S}^{1,cr}(\mathcal{T}_h)\,:\,
     q_h(x_S) = 0, \text{ for all }
     S\in\mathcal S_{h}^D}.
\end{align*}
We record a number of key properties satisfied by the Crouzeix--Raviart interpolant below.
\begin{lemma}[Crouzeix--Raviart interpolant {\rm \cite{EG21I}}]
\label{lem:CR-qi} 
The Crouzeix--Raviart interpolant $\Pi_{h}^{\mathrm{cr}}: H^{1}(\Omega)\to
\mathcal S^{1,\mathrm{cr}}(\mathcal T_{h})$ satisfies the following commutation properties: for every $q \in H^1(\Omega)$,
\begin{subequations}
\label{eq:CR-commutation}
\begin{align}
  \nabla_{h}\Pi_{h}^{\mathrm{cr}}q
  &\;=\;\Pi_{h}\nabla q
  \qquad\text{a.e. in }\Omega,\label{eq:CR-comm-grad}\\
  \pi_{h}\Pi_{h}^{\mathrm{cr}}q
  &\;=\;\pi_{h}q
  \qquad\text{a.e. on }\cup_{S \in \mathcal{S}_h} S.\label{eq:CR-comm-trace}
\end{align}
\end{subequations}
For every $s\in[0,1]$, there exists $c>0$, independent of $h>0$, such
that for every $q\in H^{1+s}(\Omega)$ and every $T\in\mathcal T_{h}$,
\begin{subequations}
\label{eq:CR-approx}
\begin{align}
  \|q-\Pi_{h}^{\mathrm{cr}}q\|_{L^{2}(T)}
  +h_{T}\|\nabla q-\nabla_{h}\Pi_{h}^{\mathrm{cr}}q\|_{L^{2}(T)}
  &\;\le\;c\,h_{T}^{1+s}\,|q|_{H^{1+s}(T)}.
\label{eq:CR-approx-vol}
\end{align}
Moreover, for every $S\in\mathcal S_{h}$ and every
$q\in H^{1+s}(T_{S})$ with $S\subset\partial T_{S}$,
\begin{align}
  \|q-\Pi_{h}^{\mathrm{cr}}q\|_{L^{2}(S)}
  &\;\le\;c\,h_{S}^{1/2+s}\,|q|_{H^{1+s}(T_{S})}.
\label{eq:CR-approx-facet}
\end{align}
\end{subequations}
\end{lemma}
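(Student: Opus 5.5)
The proof splits into three pieces: the two commutation identities \eqref{eq:CR-commutation}, which are exact identities, and the approximation bounds \eqref{eq:CR-approx}, which follow from a Bramble--Hilbert argument.

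\emph{Trace commutation \eqref{eq:CR-comm-trace}.} This is immediate from the definition \eqref{eq:CR-qi}. Let $S\in\mathcal S_h$. On each element containing $S$, the function $\Pi_h^{\mathrm{cr}}q$ is affine, and its value at $x_S$ is $\langle q\rangle_S$, because $\varphi_{S'}(x_S)=\delta_{S,S'}$. An affine function on $S$ has mean equal to its barycentric value, so $\pi_h\Pi_h^{\mathrm{cr}}q|_S=\langle q\rangle_S=\pi_h q|_S$. On interior facets this holds from either side, since the facet mean is single-valued there.

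\emph{Gradient commutation \eqref{eq:CR-comm-grad}.} Fix $T\in\mathcal T_h$. The gradient $\nabla_h\Pi_h^{\mathrm{cr}}q|_T$ is constant, so it suffices to test against an arbitrary constant vector $\bm c$. Integrating by parts elementwise gives
\[
\int_T\nabla\Pi_h^{\mathrm{cr}}q\cdot\bm c=\sum_{S\subset\partial T}\bm c\cdot\bm n_T\int_S\Pi_h^{\mathrm{cr}}q=\sum_{S\subset\partial T}\bm c\cdot\bm n_T\int_S q=\int_T\nabla q\cdot\bm c .
\]
The middle equality uses \eqref{eq:CR-comm-trace} together with the fact that $\bm n_T$ is constant on each facet. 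Hence $\nabla_h\Pi_h^{\mathrm{cr}}q|_T$ is the mean of $\nabla q$ over $T$, which is exactly $\Pi_h\nabla q$.

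\emph{Approximation estimates.} From \eqref{eq:CR-comm-grad}, $\nabla q-\nabla_h\Pi_h^{\mathrm{cr}}q=\nabla q-\Pi_h\nabla q$ on $T$. Applying \eqref{eq:proj-approx-T} componentwise with $r=s$ gives the gradient part of \eqref{eq:CR-approx-vol}.

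For the $L^2$ part, set $e:=q-\Pi_h^{\mathrm{cr}}q$ on $T$. The restriction $\Pi_h^{\mathrm{cr}}q|_T$ depends only on the facet means of $q$ over $\partial T$, so $e$ is local to $T$. By \eqref{eq:CR-comm-trace}, $e$ has zero mean on some facet $S\subset\partial T$. A Poincaré--Friedrichs inequality for functions with vanishing facet mean, obtained by scaling from the reference element, then gives
\[
\|e\|_{L^2(T)}\le c\,h_T\|\nabla e\|_{L^2(T)}.
\]
Combining this with the gradient bound yields $c\,h_T^{1+s}|q|_{H^{1+s}(T)}$.

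For the facet estimate \eqref{eq:CR-approx-facet}, I would use the multiplicative trace inequality
\[
\|e\|_{L^2(S)}^2\le c\bigl(h_T^{-1}\|e\|_{L^2(T)}^2+\|e\|_{L^2(T)}\|\nabla e\|_{L^2(T)}\bigr),
\]
applied with $T=T_S$. Inserting the volume bounds gives $h^{1+2s}|q|^2_{H^{1+s}(T_S)}$, and taking square roots gives the claimed $h_S^{1/2+s}$. Shape regularity makes $h_S$ and $h_{T_S}$ comparable.

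The main delicate point is the fractional case $s\in(0,1)$. I would avoid interpolating operators altogether: the estimates are obtained directly from the fractional projection bound \eqref{eq:proj-approx-T} applied to $\nabla q\in H^s(T)$, with no fractional Bramble--Hilbert lemma needed. What must be checked is that this bound, and the Poincaré constant, depend only on shape regularity.
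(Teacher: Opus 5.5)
Your proof is correct. The paper gives no argument of its own for this lemma and only cites \cite{EG21I}, so what follows compares your route with the standard textbook one rather than with a proof in the paper.

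The textbook route obtains such bounds from general local interpolation theory: $\mathbb{P}^1$-invariance of the interpolant, a Bramble--Hilbert-type argument on the reference simplex, and affine scaling. The fractional range then needs a fractional Bramble--Hilbert lemma or an interpolation argument. Your route is different and more economical for this particular element:
\begin{itemize}
\item the commuting identity \eqref{eq:CR-comm-grad} makes the gradient error exactly the local best-approximation error $\min_{\bm c\in\mathbb{R}^d}\|\nabla q-\bm c\|_{L^2(T)}$;
\item a facet-mean Poincar\'e inequality transfers this to the $L^2$ error;
\item the multiplicative trace inequality gives the facet bound.
\end{itemize}
So no Bramble--Hilbert lemma is used anywhere, despite what your opening sentence announces.

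The point you leave as ``to be checked'' is elementary. For $v\in H^s(T)$ with $s\in(0,1)$,
\[
\|v-\Pi_h v\|_{L^2(T)}^2=\frac{1}{2|T|}\int_T\int_T|v(x)-v(y)|^2\,\mathrm{d}x\,\mathrm{d}y\le\frac{h_T^{d}}{2|T|}\,h_T^{2s}\,|v|_{H^s(T)}^2 .
\]
The constant therefore depends only on $h_T^d/|T|$, which is controlled by shape regularity. The facet-mean Poincar\'e constant follows by affine pullback, because vanishing facet means are preserved under affine maps and the reference simplex has finitely many facets.

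What each approach buys: the generic theory is uniform over element families, polynomial degrees and $W^{m,p}$ norms. Yours is tied to the lowest-order Crouzeix--Raviart element, where gradients are piecewise constant, but it is self-contained, handles all $s\in[0,1]$ at once, and makes every constant explicit.
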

 
\subsubsection{The Raviart--Thomas finite element space}

The (lowest-order) Raviart--Thomas space~\cite{RT77} is defined as the following space of piecewise affine vector fields:
\begin{equation*}
  \mathcal{RT}^{0}(\mathcal T_{h})
  \;:=\;\bigl\{\bm{v}_{h}\in H(\textup{div};\Omega)\;:\;
     \bm{v}_{h}|_{T}\in\mathbb{P}^{0}(T)^{d}\oplus\bm{x}\,\mathbb{P}^{0}(T)\ \text{for all }T\in\mathcal T_{h}\bigr\}.
\end{equation*}
Equivalently, $\bm{v}_{h}\in\mathcal{RT}^{0}(\mathcal T_{h})$ if and only
if $\bm{v}_{h}|_{T}(\bm{x})=a_{T}+b_{T}\bm{x}$ with
$a_{T}\in\mathbb{R}^{d}$ and $b_{T}\in\mathbb{R}$ on each
$T\in\mathcal T_{h}$ and  
$\llbracket\bm{v}_{h}\cdot\bm{n}\rrbracket_{S} = 0$ across every interior facet
$S\in\mathcal S_{h}^{i}$.
We will also require the following subspaces of $\mathcal{R}T^0(\mathcal{T}_h)$ with vanishing flux on the boundary:
\begin{align*}
\mathcal{R}T^0_0(\mathcal{T}_h) &:= \{\bm v_h\in \mathcal{R}T^0(\mathcal{T}_h) \, : \, \bm v_h\cdot \bm n= 0\text{ a.e.\ on }\partial\Omega\}, \\
\mathcal{R}T^0_C(\mathcal{T}_h)& := \{\bm v_h\in \mathcal{R}T^0(\mathcal{T}_h) \, : \, \bm v_h\cdot \bm n|_{S}= 0, \text{ for all }
     S\in\mathcal S_{h}^C\}.
\end{align*}
The canonical
basis of the Raviart--Thomas space $\mathcal{RT}^{0}(\mathcal T_{h})$ is furnished by the set of vector fields $\bm \psi_S \in  \mathcal{R}T^0(\mathcal{T}_h)$, $S \in  \mathcal{S}_h$, satisfying  $\bm \psi_S|_{S'}\cdot \bm n_{S'} = \delta_{S,S'}$ on $S'$ for all ${S' \in  \mathcal{S}_h}$,  where~$\bm n_S$ is the unit normal vector on $S$ pointing from $T_-$ to $T_+$~if~${T_+,T_- \in  \mathcal{T}_h}$~with~${S = \partial T_+\cap \partial T_-}$. For $s > \frac{1}{2}$, the Raviart--Thomas quasi-interpolant is the linear operator
$\Pi_{h}^{\mathrm{rt}}:(H^{s}(\Omega))^{d}\cap H(\textup{div};\Omega)\to
\mathcal{RT}^{0}(\mathcal T_{h})$~defined~by\footnote{Alternatively, one may define $\Pi_h^{rt} : V_{p,q}(\Omega) :=  \cbr[1]{\bm v \in  (L^p(\Omega))^d \mid   \textup{div}\,\bm v \in  L^q(\Omega)} \to  \smash{\mathcal{R}T^{0}(\mathcal{T}_h)}$, where $p>2$ and $q>\frac{2d}{d+2}$. The space $V_{p,q}(\Omega)$ arises naturally when studying flows in heterogeneous porous media (\textit{cf}.~\mbox{\cite[Chapter 40]{EG21II}}).}
	\begin{align}
		\Pi_h^{rt} y := \sum_{S\in \mathcal{S}_h}{\langle y\cdot n_S\rangle_S\,\psi_S}.\label{RT-interpolant}
	\end{align}
We collect a number of key properties satisfied by the Raviart--Thomas interpolant in the following:
\begin{lemma}[Raviart--Thomas quasi-interpolant {\rm \cite{Boffi:book,EG21I}}]
\label{lem:RT-qi} 
The Raviart--Thomas interpolant 
$\Pi_{h}^{\mathrm{rt}}:(H^{s}(\Omega))^{d}\cap H(\textup{div};\Omega)\to
\mathcal{RT}^{0}(\mathcal T_{h})$, $s > \tfrac{1}{2}$, satisfies the following commutation properties: for every
$\bm{v}\in(H^{s}(\Omega))^{d}\cap H(\textup{div};\Omega)$,
\begin{subequations}
\label{eq:RT-commutation}
\begin{align}
  \textup{div}(\Pi_{h}^{\mathrm{rt}}\bm{v})
  &\;=\;\Pi_{h}(\textup{div}\,\bm{v})
  \qquad\text{a.e. in }\Omega,\label{eq:RT-comm-div}\\
  \Pi_{h}^{\mathrm{rt}}\bm{v}\cdot\bm{n}
  &\;=\;\pi_{h}(\bm{v}\cdot\bm{n})
  \qquad\text{a.e. on }\cup_{S \in \mathcal{S}_h} S.\label{eq:RT-comm-trace}
\end{align}
\end{subequations}
Moreover, there exists $C>0$, independent of $h>0$, such that for every $T\in\mathcal T_{h}$, the following approximation property holds:
\begin{equation}
  \label{eq:RT-approx}
  \|\bm{v}-\Pi_{h}^{\mathrm{rt}}\bm{v}\|_{L^{2}(T)}
  \;\le\;C\,h_{T}^{s}\,|\bm{v}|_{H^{s}(T)}.
\end{equation}
\end{lemma}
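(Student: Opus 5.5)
The statement to prove is \Cref{lem:RT-qi}: the two commutation properties \eqref{eq:RT-commutation} and the local approximation bound \eqref{eq:RT-approx}. The plan is to work directly from the definition \eqref{RT-interpolant} and the duality of the basis $\{\bm\psi_S\}$ with facet normal moments. Well-definedness comes first: for $\bm v\in (H^s(\Omega))^d$ with $s>\tfrac12$, each $\bm v|_T$ has a trace in $L^2(\partial T)$. Hence $\langle \bm v\cdot\bm n_S\rangle_S$ is a meaningful number. Because $\bm v\in H(\textup{div};\Omega)$, this number is the same from both sides of an interior facet, so $\Pi_h^{rt}\bm v$ is a well-defined element of $\mathcal{RT}^0(\mathcal T_h)$.

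The trace commutation \eqref{eq:RT-comm-trace} follows from the basis property $\bm\psi_S\cdot\bm n_{S'}=\delta_{S,S'}$. It gives $\Pi_h^{rt}\bm v\cdot\bm n|_{S'}=\langle\bm v\cdot\bm n_{S'}\rangle_{S'}=\pi_h(\bm v\cdot\bm n)|_{S'}$, since normal traces of $\mathcal{RT}^0$ fields are facetwise constant.

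For the divergence commutation \eqref{eq:RT-comm-div}, note that $\textup{div}(\Pi_h^{rt}\bm v)|_T=d\,b_T$ is constant. It therefore suffices to compare integrals over each $T$. By the divergence theorem on $T$ together with the trace commutation,
\[
\int_T \textup{div}\,\Pi_h^{rt}\bm v\,\dif x=\sum_{S\subset\partial T}\int_S \Pi_h^{rt}\bm v\cdot\bm n_T\,\dif s=\sum_{S\subset\partial T}\int_S \bm v\cdot\bm n_T\,\dif s=\int_T\textup{div}\,\bm v\,\dif x .
\]
Dividing by $|T|$ gives $\Pi_h(\textup{div}\,\bm v)$. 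The integration by parts for $\bm v$ on $T$ is legitimate because $\bm v|_T\in H(\textup{div};T)$ with $L^2$ trace.

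The approximation bound \eqref{eq:RT-approx} follows by the Bramble--Hilbert/Deny--Lions route on the reference element. The element interpolant $\Pi_T$ uses only facet moments of $T$, so it is local. It preserves constants, since $\mathbb P^0(T)^d\subset \mathcal{RT}^0(T)$ and it reproduces $\mathcal{RT}^0(T)$. Transport to the reference simplex by the contravariant Piola map, which preserves normal facet moments. On the reference element, the trace inequality gives $\|\widehat\Pi\widehat{\bm w}\|_{L^2}\le C\|\widehat{\bm w}\|_{H^s(\widehat T)}$ for $s>\tfrac12$.

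Writing $\bm v-\Pi_T\bm v=(I-\Pi_T)(\bm v-\bm c)$ with $\bm c$ the mean of $\bm v$ on $T$, a fractional Poincaré inequality yields $\|\widehat{\bm w}-\widehat{\bm c}\|_{H^s(\widehat T)}\le C|\widehat{\bm w}|_{H^s(\widehat T)}$. Scaling back gives $h_T^s|\bm v|_{H^s(T)}$, with constants depending only on shape regularity.

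The main obstacle is the fractional-order scaling for $s\in(\tfrac12,1)$. The Slobodeckij seminorm scales as $h_T^{s-d/2}$ under affine maps, and the Piola factors $|\det B_T|^{-1}B_T$ must cancel against this to leave exactly $h_T^s$. For $s\ge1$ one instead uses integer seminorms with the cap $s\le1$. I would cite \cite[Ch.~16--17]{EG21I} for the Piola bookkeeping rather than redo it.
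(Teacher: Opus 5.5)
Your proposal is correct and is essentially the standard textbook argument: the duality of the basis with facet normal moments gives both commutation identities, and a Piola-transformed Bramble--Hilbert argument with a fractional Poincar\'e inequality gives \eqref{eq:RT-approx}. The paper does not reproduce this argument and simply defers to \cite{Boffi:book,EG21I}, and you are right to restrict \eqref{eq:RT-approx} to $\tfrac12<s\le1$, a cap the statement leaves implicit but which is necessary because $\mathcal{RT}^0(\mathcal T_h)$ does not reproduce all affine fields.
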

\noindent The discrete duality framework in
Section~\ref{sec:discrete-flux-constrained} relies on the following
integration-by-parts (IBP) identity relating
$\mathcal{RT}^{0}(\mathcal T_{h})$ and
$\mathcal S^{1,\mathrm{cr}}(\mathcal T_{h})$:
for every $q_{h}\in\mathcal S^{1,\mathrm{cr}}(\mathcal T_{h})$ and $\bm{v}_{h}\in\mathcal{RT}^{0}(\mathcal T_{h})$, it holds that
\begin{equation}
  \label{eq:discrete-IBP}
  (\nabla_{h}q_{h},\Pi_{h}\bm{v}_{h})_{\Omega}
    +(\Pi_{h}q_{h},\textup{div}\,\bm{v}_{h})_{\Omega}
  \;=\;(\pi_{h}q_{h},\bm{v}_{h}\cdot\bm n)_{\partial\Omega}.
\end{equation}
Note that \eqref{eq:discrete-IBP} is a simple consequence of the fact that the facet averages of Crouzeix--Raviart functions and the normal components of Raviart--Thomas functions are single-valued across interior facets $S \in \mathcal{S}_h^i$.
In the sequel, we require the following lifting result, which is a special case of \cite[Lemma A.1]{BartelsGudiKaltenbach25}:
\begin{lemma}[Discrete lifting]\label{lem:disc-lifting}
Let $\bar{\bm u}_h \in (\mathcal{L}^0(\mathcal{T}_h))^d$ and
$f_h \in \mathcal{L}^0(\mathcal{T}_h)$ satisfy the compatibility
condition
\begin{equation}\label{eq:compat-lift}
   (\bar{\bm u}_h,\nabla_h r_h)_\Omega + (f_h,\Pi_h r_h)_\Omega \;=\; 0
   \qquad\forall\,r_h\in\mathcal{S}^{1,\mathrm{cr}}_{0}(\mathcal{T}_h).
\end{equation}
 Then there
exists $\bm u_h^{rt}\in\mathcal{R}T^0(\mathcal{T}_h)$ with
$$\Pi_h\bm u_h^{rt}=\bar{\bm u}_h\quad\text{a.e.\ in }\Omega,
\qquad
\mathrm{div}\,\bm u_h^{rt}=f_h\quad\text{a.e.\ in }\Omega.$$
\end{lemma}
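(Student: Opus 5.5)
We need to prove the discrete lifting: given $\bar{\bm u}_h$, $f_h$ piecewise constant satisfying \eqref{eq:compat-lift}, find an RT field with those element means and divergence.

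On each $T$, an $\mathcal{RT}^0$ field has the form $\bm v_h|_T=a_T+b_T\bm x$. Its divergence is $d\,b_T$, and its element mean is $a_T+b_T x_T$. Hence the two conditions fix the local field uniquely. We set
\begin{equation*}
  \bm u_h^{rt}|_T:=\bar{\bm u}_h|_T+\tfrac{f_h|_T}{d}\,(\bm x-x_T),
\end{equation*}
which is the inverse Marini-type formula. By construction this piecewise field satisfies $\Pi_h\bm u_h^{rt}=\bar{\bm u}_h$ and $\mathrm{div}_h\bm u_h^{rt}=f_h$ elementwise. Moreover, $\bm u_h^{rt}\cdot\bm n_T$ is constant on each facet of $T$, because $(\bm x-x_T)\cdot\bm n_T$ is constant on each facet. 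So $\bm u_h^{rt}$ belongs to $\mathcal{RT}^0$ if and only if its normal jumps vanish on all interior facets. The whole task therefore reduces to verifying $\llbracket\bm u_h^{rt}\cdot\bm n\rrbracket_S=0$ for every $S\in\mathcal S_h^i$.

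To show this, we test against the Crouzeix--Raviart basis functions $\varphi_S$ with $S\in\mathcal S_h^i$. These lie in $\mathcal S^{1,\mathrm{cr}}_0(\mathcal T_h)$, since $\varphi_S(x_{S'})=0$ for every boundary facet $S'$. On each $T$ we perform an elementwise integration by parts, using that $\bm u_h^{rt}$ is affine and $\varphi_S$ is affine:
\begin{equation*}
  (\bm u_h^{rt},\nabla\varphi_S)_T+(\mathrm{div}\,\bm u_h^{rt},\varphi_S)_T=\int_{\partial T}\bm u_h^{rt}\cdot\bm n_T\,\varphi_S\,\dif s .
\end{equation*}
Three facts let us rewrite the left-hand side:
\begin{itemize}
\item $\nabla\varphi_S$ is constant on $T$, so $(\bm u_h^{rt},\nabla\varphi_S)_T=(\Pi_h\bm u_h^{rt},\nabla\varphi_S)_T=(\bar{\bm u}_h,\nabla_h\varphi_S)_T$;
\item the divergence is the constant $f_h$ on $T$, so $(\mathrm{div}\,\bm u_h^{rt},\varphi_S)_T=(f_h,\Pi_h\varphi_S)_T$;
\item the normal flux is facetwise constant, so on each facet the boundary integral equals the flux times $|S'|\,\varphi_S(x_{S'})=|S'|\delta_{SS'}$.
\end{itemize}
Summing over $T$ then gives
\begin{equation*}
  (\bar{\bm u}_h,\nabla_h\varphi_S)_\Omega+(f_h,\Pi_h\varphi_S)_\Omega=|S|\,\llbracket\bm u_h^{rt}\cdot\bm n\rrbracket_S .
\end{equation*}
Boundary facets drop out of this sum because $\varphi_S$ vanishes at their barycenters. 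The compatibility condition \eqref{eq:compat-lift} with $r_h=\varphi_S$ says the left-hand side is zero, so every interior normal jump vanishes.

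The main point to check carefully is this last bookkeeping step: the signs and orientation in the normal-jump definition, and the fact that the facet-barycenter evaluation, which is exact for the product of a facet-constant and an affine function, picks out exactly one facet. The remaining steps are direct computations.
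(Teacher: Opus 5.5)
Your proof is correct and complete. The elementwise formula $\bm u_h^{rt}|_T=\bar{\bm u}_h|_T+\tfrac{f_h|_T}{d}(\bm x-x_T)$ fixes the element mean and the divergence. Testing \eqref{eq:compat-lift} with the interior Crouzeix--Raviart basis functions $\varphi_S$ then gives exactly $|S|\,\llbracket\bm u_h^{rt}\cdot\bm n\rrbracket_S=0$, under the paper's normal-jump convention.

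The paper gives no proof of its own; it cites the lemma as a special case of \cite[Lemma A.1]{BartelsGudiKaltenbach25}. Your argument is a self-contained version of that standard route, namely the same inverse-Marini-type construction followed by testing against Crouzeix--Raviart basis functions, which the cited result carries out in a more general setting.
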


\section{Variational formulation and duality}\label{sec:flux-constrained}

In this section, we formulate a variational problem whose Euler--Lagrange system corresponds, in the distributional sense, to the Darcy system with bilateral flux constraints \eqref{eq:Darcy_a}--\eqref{eq:Darcy_d}, and develop a Fenchel duality theory at the continuous level: we pose a velocity-based (\emph{dual}) formulation and prove its well-posedness (\Cref{thm:cts-dual-exist}), identify a pressure-based (\emph{primal}) formulation as its Fenchel predual, and establish strong duality together with the convex optimality conditions (\Cref{lem:strong-duality}). To ensure a well-posed formulation, we make the following assumptions on the problem data:
\begin{assumption} \label{assumption:data-assumption}
Throughout, we make the following assumptions on the data: 
	\begin{enumerate}[noitemsep,topsep=2pt,leftmargin=!,labelwidth=\widthof{(ii)}]
		\item[(i)] The source term and Dirichlet data satisfy $f \in L^2(\Omega)$ and $p_D \in H^{\frac{1}{2}}(\Gamma_D)$, respectively.
		\item[(ii)]  The tensor $\bm{K}: \Omega \to \mathbb{R}^{d\times d}$ is symmetric, uniformly bounded, and elliptic. Thus, there exist constants $0 < k_0 < k_1$ such that
		\[
		k_0 |\bm \xi|^2 \le \bm \xi^T \bm K(x) \bm \xi \le k_1 |\bm \xi|^2, \quad x \in \Omega, \, \bm \xi \in \mathbb{R}^d.
		\]
		For ease of notation, we suppress the spatial dependence of $\bm K$ below.
		\item[(iii)] The lower and upper flux bounds satisfy $\alpha, \beta \in L^2(\Gamma_C)$ and $\alpha \le \beta $ a.e. on $\Gamma_C$, as well as the compatibility condition in \Cref{rmk:compat}.
	\end{enumerate}
\end{assumption}
\subsection{The dual problem}
For given data $\alpha, \beta, f, \bm K,$ and $p_D$ satisfying  \Cref{assumption:data-assumption}, we define a functional
$D : V \to \mathbb{R}\cup\{+\infty\}$ by
\begin{align} \label{eq:dual_functional}
    D(\bm v)
&:=  -\tfrac{1}{2}\| \bm K^{-\frac{1}{2}} \bm v\|_{2,\Omega}^2
    - I_{K^\star}(\bm v) -\langle p_D,\bm v\cdot \bm n\rangle_{\Gamma_D}.
\end{align} 
Here, we have defined the
indicator functional
\begin{align*}
    I_{K^\star}(\bm {v}) &:= \begin{cases}
        0, \quad \bm v \in K^\star, \\
        +\infty, \quad \text{else},
    \end{cases}
\end{align*}
with the dual admissible set defined as
\begin{equation}
  \label{eq:dual_admissible_set}
  K^\star := \cbr{\bm{v}\in V\,:\,
     \textup{div}\,\bm{v}=f\ \text{in}\ \Omega \text{ and }
     \alpha\le\bm v\cdot \bm n|_{\Gamma_{C}}\le\beta\ \text{a.e. on }\Gamma_{C}}.
\end{equation}
Note that the pairing $\langle \cdot,\cdot \rangle_{\Gamma_D}$ in \eqref{eq:dual_functional} is interpreted
via~\eqref{eq:pD-pairing} for $\bm v\in K^\star$.
We seek $\bm u \in V$ solving the optimization problem
\begin{align} \label{eq:cts_max_prob}
D(\bm u)
= \sup_{\bm v \in K^\star}  D(\bm v),
\end{align}
which we will henceforth refer to as the \emph{dual problem}. The remainder of this subsection is devoted to showing that \eqref{eq:cts_max_prob} admits a unique solution.

\begin{proposition}
\label{prop:K-weakly-closed}
The admissible dual set $K^\star$
is non-empty if either: (i) $\Gamma_D \ne \emptyset$, or (ii) the compatibility condition \eqref{eq:compatibility} holds. Moreover, $K^\star$ is convex and weakly closed in $V$.
\end{proposition}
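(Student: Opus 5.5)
Convexity and weak closedness are the routine parts, so I will dispose of them first and spend the effort on non-emptiness. For convexity, take $\bm v_1,\bm v_2\in K^\star$ and $t\in[0,1]$. The divergence and the restricted normal trace $\bm v\mapsto \bm v\cdot\bm n|_{\Gamma_C}\in L^2(\Gamma_C)$ are linear on $V$. Hence $\textup{div}(t\bm v_1+(1-t)\bm v_2)=f$. The bilateral pointwise bounds are preserved under convex combinations a.e. on $\Gamma_C$.

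For weak closedness, $K^\star$ is convex, so by Mazur's lemma it suffices to show strong closedness in $V$. Let $\bm v_j\to\bm v$ in $V$. Then $\textup{div}\,\bm v_j\to \textup{div}\,\bm v$ in $L^2(\Omega)$, so $\textup{div}\,\bm v=f$. Also $\bm v_j\cdot\bm n|_{\Gamma_C}\to \bm v\cdot \bm n|_{\Gamma_C}$ in $L^2(\Gamma_C)$ by the definition of the norm \eqref{eq:V-norm}. Passing to an a.e. convergent subsequence preserves $\alpha\le\cdot\le\beta$. Alternatively, I could note that $K^\star$ is the preimage of the closed convex set $\{f\}\times\{\mu:\alpha\le\mu\le\beta\}$ under the bounded linear map $\bm v\mapsto(\textup{div}\,\bm v,\bm v\cdot\bm n|_{\Gamma_C})$; this map is weak-weak continuous, which gives weak closedness directly.

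Non-emptiness is the main step, and I will construct an explicit element via an auxiliary mixed-boundary Poisson problem. Pick any $g\in L^2(\Gamma_C)$ with $\alpha\le g\le\beta$. In case (i) take $g=\alpha$. In case (ii) take $g=\alpha+\theta(\beta-\alpha)$ with $\theta\in[0,1]$ chosen so that $\int_{\Gamma_C}g\dif s=\int_\Omega f\dif x$. Such a $\theta$ exists by \eqref{eq:compatibility} and the intermediate value theorem, since the integral is affine in $\theta$. Next I will solve for $w\in H^1_{D}(\Omega):=\{w\in H^1(\Omega):\gamma(w)|_{\Gamma_D}=0\}$ (or $H^1(\Omega)/\mathbb R$ when $\Gamma_D=\emptyset$) the problem
\[
(\nabla w,\nabla z)_\Omega=-(f,z)_\Omega+\langle g,\gamma(z)\rangle_{\Gamma_C}\qquad\forall z\in H^1_D(\Omega).
\]
Lax--Milgram applies via the Poincaré/Friedrichs inequality. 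In case (i) this uses $|\Gamma_D|>0$. In case (ii) it uses the mean-zero Poincaré inequality, and then compatibility makes the right-hand side vanish on constants. The right-hand side is bounded because $\gamma:H^1(\Omega)\to L^2(\partial\Omega)$ is bounded. Setting $\bm v:=\nabla w\in(L^2(\Omega))^d$, testing with $z\in C_c^\infty(\Omega)$ gives $\textup{div}\,\bm v=f$, so $\bm v\in H(\textup{div};\Omega)$.

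The remaining point, and the place to be careful, is identifying the normal trace on $\Gamma_C$ with $g$. For $\eta\in H^{1/2}_{00}(\Gamma_C)$, lift $E_0^C\eta\in H^{1/2}(\partial\Omega)$ to some $z\in H^1(\Omega)$. Its trace vanishes on $\Gamma_D$, so $z\in H^1_D(\Omega)$. By \eqref{eq:green-full-boundary} and \eqref{eq:normal-trace-restriction},
\[
\langle\bm v\cdot\bm n|_{\Gamma_C},\eta\rangle_{\Gamma_C}=(\nabla w,\nabla z)_\Omega+(f,z)_\Omega=\langle g,\eta\rangle_{\Gamma_C}.
\]
Thus $\bm v\cdot\bm n|_{\Gamma_C}=g\in L^2(\Gamma_C)$, so $\bm v\in V$, and $\alpha\le g\le\beta$ gives $\bm v\in K^\star$. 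In case (ii) the test space is $H^1(\Omega)$ and every $\eta\in H^{1/2}(\partial\Omega)$ is admissible, so the argument is the same.
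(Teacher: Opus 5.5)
Your proposal is correct and follows essentially the same route as the paper. Both prove non-emptiness by solving an auxiliary elliptic problem whose flux datum $g$ lies between $\alpha$ and $\beta$, chosen by the intermediate value theorem in the pure-flux case. The differences are cosmetic: you use $\nabla w$ from the Laplacian instead of $-\bm K\nabla q$, and $g=\alpha$ instead of $\tfrac12(\alpha+\beta)$ in case (i). Your explicit check that $\bm v\cdot\bm n|_{\Gamma_C}=g$ in the sense of the definition of $V$, and your Mazur/preimage argument for weak closedness, supply details the paper only calls ``standard.''
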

\begin{proof}
The proof that $K^\star$ is convex and weakly closed follows from standard arguments, and therefore we only prove here that $K^\star$ is non-empty. \medskip

(i) Suppose first that $\Gamma_D \ne \emptyset$. Let $g = \tfrac{1}{2}(\alpha + \beta) \in L^2(\Gamma_C)$. To construct $\bm v \in K^\star$, one can simply set $\bm v = -\bm K \nabla q$, where $q \in H_D^1(\Omega)$ satisfies the following boundary value problem:
\begin{subequations} \label{eq:Kstar_bvp}
\begin{align}
    -\text{div} \,(\bm K \nabla q) &= f, \quad \text{in } \Omega, \label{eq:Kstar_bvp_a}\\
    -(\bm K\nabla q )\cdot \bm n &= g, \quad \text{on } \Gamma_C, \label{eq:Kstar_bvp_b}\\
    q &= 0, \quad \text{on } \Gamma_D. \label{eq:Kstar_bvp_c}
\end{align}
\end{subequations}
Note \eqref{eq:Kstar_bvp_a}--\eqref{eq:Kstar_bvp_c} is well-posed owing to assumed ellipticity of $\bm K$ and the Lax--Milgram theorem.
\medskip

(ii) If $\Gamma_D = \emptyset$,  \eqref{eq:Kstar_bvp_a}--\eqref{eq:Kstar_bvp_b} is well-posed for a given $g \in L^2(\Gamma_C)$ if and only if the classic compatibility condition \eqref{eq:neumann_compatibility} for the Neumann problem holds.
Thus, $K^\star$ is non-empty if one can construct  $g^\star \in L^2(\Gamma_C)$ with $\alpha \le g^\star \le \beta$ that further satisfies \eqref{eq:neumann_compatibility}. For $t \in [0,1]$, define $h(t) := (1-t) \int_{\Gamma_C}\alpha(s) \dif s + t \int_{\Gamma_C}\beta(s) \dif s$. The map $t \mapsto h(t) $
is continuous with $h(0) = \int_{\Gamma_C} \alpha \dif s$ and $h(1) = \int_{\Gamma_C} \beta \dif s$. Thus, if \eqref{eq:compatibility} holds, then the Intermediate Value Theorem guarantees $t^\star \in [0,1]$ such that $h(t^\star) = \int_\Omega f \dif x$. Therefore, the choice $g^\star(s) = (1-t^\star) \alpha(s) + t^\star \beta(s)$ for all $s \in \Gamma_C$ yields \eqref{eq:neumann_compatibility}. The result follows.
\qedhere
\end{proof}

\begin{theorem}[Existence and uniqueness for the dual problem]\label{thm:cts-dual-exist}
Suppose that either (i) $\Gamma_D \ne \emptyset$, or (ii) compatibility condition~\eqref{eq:compatibility} holds if
$\Gamma_D=\emptyset$. Then, there exists a unique solution $\bm u \in K^\star$ of
problem~\eqref{eq:cts_max_prob}.
\end{theorem}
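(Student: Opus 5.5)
The proof will use the direct method of the calculus of variations on the convex set $K^\star$, followed by a strict concavity argument for uniqueness. By \Cref{prop:K-weakly-closed}, under either (i) or (ii) the set $K^\star$ is non-empty, convex, and weakly closed in the Hilbert space $V$ (\Cref{prop:V-Hilbert}). Hence $\sup_{K^\star} D > -\infty$.

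The first step is to show that $-D$ restricted to $K^\star$ is proper, convex, coercive, and weakly lower semicontinuous. On $K^\star$ the boundary term is, by \eqref{eq:pD-pairing}, the affine functional
\[
\bm v\mapsto (\bm v,\nabla\widehat p_D)_\Omega + (f,\widehat p_D)_\Omega - \langle \bm v\cdot\bm n,\widehat p_D\rangle_{\Gamma_C}.
\]
It is bounded on $V$, because $|\langle \bm v\cdot\bm n,\widehat p_D\rangle_{\Gamma_C}|\le \|\bm v\cdot\bm n\|_{L^2(\Gamma_C)}\|\gamma\widehat p_D\|_{L^2(\Gamma_C)}$. A bounded affine functional is weakly continuous. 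The quadratic term $\tfrac12\|\bm K^{-1/2}\bm v\|^2$ is continuous and convex, hence weakly lower semicontinuous. Consequently $-D$ is weakly lsc on $K^\star$.

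Coercivity in the full $V$-norm comes from the constraints built into $K^\star$. For $\bm v\in K^\star$ we have $\operatorname{div}\bm v=f$ fixed, and $\|\bm v\cdot\bm n\|_{L^2(\Gamma_C)}\le \|\alpha\|+\|\beta\|$ since $\alpha\le \bm v\cdot\bm n\le\beta$. By \Cref{assumption:data-assumption}(ii), $\|\bm K^{-1/2}\bm v\|^2\ge k_1^{-1}\|\bm v\|^2$. Therefore $\|\bm v\|_V^2 \le C(1+\|\bm v\|_{L^2}^2)$ on $K^\star$, and
\[
-D(\bm v)\ge \tfrac{1}{2k_1}\|\bm v\|_{L^2}^2 - C(1+\|\bm v\|_{L^2}),
\]
so $-D(\bm v)\to\infty$ as $\|\bm v\|_V\to\infty$ within $K^\star$.

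Next I take a maximizing sequence. It is bounded in $V$, so it has a weakly convergent subsequence, and the limit lies in $K^\star$ because $K^\star$ is weakly closed. Weak upper semicontinuity of $D$ then makes this limit a maximizer $\bm u$. For uniqueness, take two maximizers $\bm u_1,\bm u_2$. Their midpoint lies in $K^\star$, and the parallelogram identity for the $\bm K^{-1}$-weighted $L^2$ inner product, together with the affinity of the remaining terms, gives
\[
D\bigl(\tfrac{\bm u_1+\bm u_2}{2}\bigr) = \max D + \tfrac18\|\bm K^{-1/2}(\bm u_1-\bm u_2)\|^2 .
\]
Maximality forces $\bm u_1=\bm u_2$ in $L^2$, and hence in $V$, since both the divergence and the $\Gamma_C$-trace are determined by the field.

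The main obstacle is the treatment of the boundary pairing. I must check that \eqref{eq:pD-pairing} is independent of the lifting and is a bounded affine functional on the constrained set. The key point is that coercivity and weak continuity hold only on $K^\star$, not on all of $V$, so the argument has to stay on $K^\star$ throughout. The weak closedness asserted in \Cref{prop:K-weakly-closed} is used as given.
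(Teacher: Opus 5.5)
Your proposal is correct and follows essentially the same route as the paper. The common steps are: the Direct Method on $K^\star$, which is convex and weakly closed by \Cref{prop:K-weakly-closed}; coercivity from the ellipticity of $\bm K$ together with the fixed divergence and the bounded $\Gamma_C$-trace; weak lower semicontinuity of the quadratic term and of the lifted affine boundary term; and uniqueness by strict convexity of $\tfrac12\|\bm K^{-\frac12}\bm v\|_{2,\Omega}^2$. Your version is slightly more careful than the paper's sketch, since you use both bounds $\alpha\le\bm v\cdot\bm n\le\beta$ to control $\|\bm v\cdot\bm n\|_{L^2(\Gamma_C)}$ (the paper cites only $\bm v\cdot\bm n\le\beta$) and you make the uniqueness step explicit via the midpoint identity.
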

\begin{proof}
We apply the Direct Method of the Calculus of Variations to the equivalent
convex minimization problem: find $\bm u \in K^\star$ satisfying
\[
-D(\bm u)
= \inf_{\bm v \in K^\star} \tilde D(\bm v),
\qquad
\tilde D(\bm v) := \tfrac{1}{2}\|\bm K^{-\frac{1}{2}} \bm v\|_{2,\Omega}^2 + I_{K^\star}(\bm v)
                  + \langle p_D,\bm v\cdot \bm n\rangle_{\Gamma_D}.
\]
By \Cref{prop:K-weakly-closed}, $K^\star$ is convex and weakly closed.
The functional $\tilde D$ is strictly convex on $V$. Observe that for any $\bm v \in K^\star$, the ellipticity of $\bm K$, the definition of the pairing $\langle \cdot, \cdot \rangle_{\Gamma_D}$, the Cauchy--Schwarz inequality, trace inequality, and Young's inequality with a sufficiently small $\epsilon >0$, and the fact that $\text{div} \, \bm v = f$ and $\bm v \cdot \bm n \le \beta$ a.e. on $\Gamma_C$ yields
\begin{align*}
    \tilde D(\bm v) &= \tfrac{1}{2}\|\bm K^{-\frac{1}{2}} \bm v\|_{2,\Omega}^2 
                  + \langle p_D,\bm v\cdot \bm n\rangle_{\Gamma_D} \\
&\ge \tfrac{1}{2}\|\bm K^{-\frac{1}{2}} \bm v\|_{2,\Omega}^2 + (\bm v,\nabla\widehat p_D)_\Omega
       + (f,\widehat p_D)_\Omega
       - \langle \bm v\cdot \bm n,\widehat p_D\rangle_{\Gamma_C} \\
    & \gtrsim  \| \bm v \|_{V}^2 -  \|\nabla \widehat p_D\|_{2,\Omega}^2 - \|f\|_{2,\Omega}^2 - \|\beta\|_{2,\Gamma_C}^2.
\end{align*}

Hence, 
$\tilde D$ is coercive on $K^\star$.
It remains to show weak lower semicontinuity of $\tilde D$ on $V$, for which
strong lower semicontinuity suffices since $\tilde D$ is convex. The map
$\bm v\mapsto\tfrac12\|\bm K^{-\frac12} \bm v\|_{2,\Omega}^{2}$ is
continuous on $L^{2}(\Omega)^{d}$, hence on $V$. The map
$\bm v\mapsto \langle p_D,\bm v\cdot\bm n\rangle_{\Gamma_D}$ is, by~\eqref{eq:pD-pairing}, continuous on $V$.
Thus, the Direct Method yields existence and strict convexity yields uniqueness.
\end{proof}
 
\subsection{Fenchel (pre)dual problem}\label{sec:cts-primal}

We next introduce a (pre)dual problem in the sense of Fenchel--Rockafellar corresponding to the optimization problem \eqref{eq:cts_max_prob}. 
For given data $\alpha, \beta, f, \bm K,$ and $p_D$ satisfying  \Cref{assumption:data-assumption}, we define a functional
$I : H^1(\Omega) \to \mathbb{R}\cup\{+\infty\}$ by
\begin{align} \label{eq:predual_functional}
    I(q) := \tfrac{1}{2} \|\bm K^{\frac12} \nabla q\|_{2,\Omega}^2
           - (f,q)_\Omega
           + \langle \beta, q^+\rangle_{\Gamma_C}
           - \langle \alpha, q^-\rangle_{\Gamma_C}
           + I_{K}(q).
\end{align}
where, for a given function $q \in L^2(\Gamma_C)$, we have defined its \emph{positive part} $q^+ := \max(0,q)$ and its \emph{negative part} $q^- := \max(0,-q)$. Here, we have defined the
indicator functional
\begin{align*}
    I_{K}(q) &:= \begin{cases}
        0, \quad q \in K, \\
        +\infty, \quad \text{else},
    \end{cases}
\end{align*}
with the primal admissible set defined as
\begin{align} \label{eq:primal_admissible_set}
K := \{q\in H^1(\Omega) : \gamma(q)|_{\Gamma_D}=p_D\}. 
\end{align}
We seek $p \in H^1(\Omega)$ solving the optimization problem
\begin{align} \label{eq:cts_min_prob}
I(p) = \inf_{ q \in H^1(\Omega)} I(q).
\end{align}
which we will henceforth refer to as the \emph{primal problem}. The following result shows that the dual problem \eqref{eq:cts_max_prob} is indeed the Fenchel dual of the primal problem \eqref{eq:cts_min_prob}.

\begin{theorem}[Fenchel predual and strong duality]\label{lem:strong-duality}
Suppose \Cref{assumption:data-assumption} holds and let  $\bm u \in K^\star$ be the unique maximizer of \eqref{eq:cts_max_prob}. Then, the following statements apply:
\begin{itemize}[noitemsep,topsep=2pt,leftmargin=!,labelwidth=\widthof{(ii)}]
    \item[(i)] The Fenchel (pre)dual problem to the maximization of \eqref{eq:cts_max_prob} is the minimization of \eqref{eq:predual_functional}.
    \item[(ii)] There exists a minimizer $p \in K$ of \eqref{eq:predual_functional} which is furthermore unique if $\Gamma_D \ne \emptyset$. Moreover, there holds a strong duality relation: 
    \begin{align} \label{eq:cts_strong_duality}
        I(p) = D(\bm u).
    \end{align}
    \item[(iii)] The following convex optimality conditions hold:
    \begin{subequations}\label{eq:cts_optimality}
\begin{align}
    \bm u &= -\bm K(x) \nabla p, \label{eq:cts_optimality_1} \\
       \langle\bm u\cdot \bm n, p\rangle_{\Gamma_C}
   & = \langle \beta, p^+\rangle_{\Gamma_C} - \langle \alpha, p^-\rangle_{\Gamma_C}   \label{eq:cts_optimality_2} 
\end{align}
\end{subequations}
\end{itemize}
\end{theorem}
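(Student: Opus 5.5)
The proof proceeds in the Fenchel--Rockafellar framework. The primal energy is written as $I(q)=F(\Lambda q)+G(q)$ on $X:=H^1(\Omega)$, with $\Lambda q:=(\nabla q,\gamma(q)|_{\Gamma_C})\in Y:=(L^2(\Omega))^d\times L^2(\Gamma_C)$. The pieces are
\begin{align*}
F(\bm y,\mu):=\tfrac12\|\bm K^{\frac12}\bm y\|_{2,\Omega}^2+\langle\beta,\mu^+\rangle_{\Gamma_C}-\langle\alpha,\mu^-\rangle_{\Gamma_C},\qquad G(q):=-(f,q)_\Omega+I_K(q).
\end{align*}
The dual problem is then $\sup_{(\bm y^*,\mu^*)}\{-F^*(-(\bm y^*,\mu^*))-G^*(\Lambda^*(\bm y^*,\mu^*))\}$, and the task is to compute both conjugates and identify this with $D$.

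The conjugate of $F$ is explicit. The quadratic part gives $\tfrac12\|\bm K^{-\frac12}\bm y^*\|^2$. The boundary part is pointwise the support function of $[\alpha,\beta]$, since $\mu\mapsto\beta\mu^+-\alpha\mu^-=\max_{t\in[\alpha,\beta]}t\mu$. Its conjugate is therefore the indicator of $\alpha\le\mu^*\le\beta$ a.e.

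For $G^*(\Lambda^*(\bm v,\mu^*))$ one takes $\sup_{q\in K}\{(\bm v,\nabla q)+\langle\mu^*,q\rangle_{\Gamma_C}+(f,q)\}$, with sign conventions arranged so that $\bm v$ plays the role of the velocity. Testing first with $q\in C_c^\infty(\Omega)$ forces $\operatorname{div}\bm v=f$ in the distributional sense with the correct sign. Testing with $q$ whose traces lie in $H^{1/2}_{00}(\Gamma_C)$, using Green's formula \eqref{eq:green-full-boundary}, forces $\gamma_{\bm n}(\bm v)|_{\Gamma_C}=\mu^*\in L^2(\Gamma_C)$, so $\bm v\in V$. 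Once both constraints hold, the supremum is finite and constant on $K$. By \Cref{def:pD-pairing} applied with the lifting $\widehat p_D$, it equals $\langle p_D,\bm v\cdot\bm n\rangle_{\Gamma_D}$. Assembling these pieces reproduces $D$ on $K^\star$, which proves (i).

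For (ii), weak duality $\inf I\ge\sup D$ is automatic. Existence of a minimizer $p$ follows from the direct method. Coercivity comes from Poincaré--Friedrichs when $\Gamma_D\neq\emptyset$. When $\Gamma_D=\emptyset$, one works modulo constants: the compatibility condition \eqref{eq:compatibility} makes $-(f,c)+\beta c^+ -\alpha c^-\ge0$ for constants $c$, which gives coercivity on the quotient. Strict convexity of the Dirichlet term together with the trace condition gives uniqueness when $\Gamma_D\neq\emptyset$. For strong duality, I would invoke the Fenchel--Rockafellar theorem with the constraint qualification that $F$ is finite and continuous on all of $Y$ (it is, because $\alpha,\beta\in L^2$) and that $\operatorname{dom}G\neq\emptyset$. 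This yields $\inf I=\max D$ with dual attainment, and the attained dual solution coincides with $\bm u$ by \Cref{thm:cts-dual-exist}.

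For (iii), the optimality conditions are the equality cases in the Fenchel--Young inequalities, $F(\Lambda p)+F^*(-\bm u,-\bm u\cdot\bm n)=\langle\Lambda p,(-\bm u,-\bm u\cdot\bm n)\rangle$, with the analogous identity for $G$. In the quadratic part, equality forces $\bm u=-\bm K\nabla p$. In the boundary part, equality gives $\langle\bm u\cdot\bm n,p\rangle_{\Gamma_C}=\langle\beta,p^+\rangle-\langle\alpha,p^-\rangle$. A cleaner route is to expand $I(p)-D(\bm u)=0$ directly using Green's formula and \Cref{def:pD-pairing}. This writes the gap as a sum of two nonnegative terms: $\tfrac12\|\bm K^{-\frac12}(\bm u+\bm K\nabla p)\|^2$, and $\langle\beta,p^+\rangle-\langle\alpha,p^-\rangle-\langle\bm u\cdot\bm n,p\rangle_{\Gamma_C}$, which is nonnegative pointwise because $\alpha\le\bm u\cdot\bm n\le\beta$. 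Both terms must then vanish.

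The main obstacle I expect is the rigorous computation of $G^*\circ\Lambda^*$. The delicate points are proving that finiteness of the supremum forces the normal trace on $\Gamma_C$ to be an $L^2$ function (via density of $H^{1/2}_{00}(\Gamma_C)$ in $L^2(\Gamma_C)$ and the fact that $\gamma(H^1)$ restricted to $\Gamma_C$ is rich enough), and handling the $\Gamma_D$ pairing without $L^2$ regularity on $\Gamma_D$, which is exactly why \Cref{def:pD-pairing} is built from the lifting.
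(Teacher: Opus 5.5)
You use a different decomposition from the paper. You put the trace into the operator, $\Lambda q=(\nabla q,\gamma(q)|_{\Gamma_C})$. The boundary term then becomes a continuous support function on $L^2(\Gamma_C)$, whose conjugate is simply the indicator of $\{\alpha\le\cdot\le\beta\}$. The constraint qualification is trivial, and $\bm v\in V$ follows from finiteness of $G^*$, because the normal trace is identified with the $L^2$ variable $\mu^*$. The paper composes only $\nabla$ and keeps the boundary terms in the functional on $H^1(\Omega)$. It must therefore compute a supremum over $\gamma(K)|_{\Gamma_C}$, using density in $L^2(\Gamma_C)$ and an interchange of supremum and integral, and it obtains (iii) from subdifferential inclusions. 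Your gap expansion is the computation the paper only carries out later in \Cref{lem:gap-decomp}. This part of your proposal is sound and arguably more economical. Fix the sign: finiteness of $G^*(\Lambda^*(\bm v,\mu^*))$ forces $\bm v\cdot\bm n|_{\Gamma_C}=-\mu^*$. So the optimal dual pair is $(\bm u,-\bm u\cdot\bm n)$, and the Fenchel--Young identity involves $F^*(-\bm u,\bm u\cdot\bm n)$.

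The genuine gap is existence of $p$ when $\Gamma_D=\emptyset$. There $I$ is not invariant under adding constants, so there is no quotient to pass to, and nonnegativity of $c\mapsto -c\int_\Omega f+\int_{\Gamma_C}(\beta c^+-\alpha c^-)$ gives no coercivity. Using $\beta x^+-\alpha x^-=\beta x+(\beta-\alpha)x^-$, one has
\begin{align*}
I(q+c)=J(q)+c\Bigl(\int_{\Gamma_C}\beta\,\mathrm{d}s-\int_\Omega f\,\mathrm{d}x\Bigr)+\int_{\Gamma_C}(\beta-\alpha)(q+c)^-\,\mathrm{d}s,\qquad c\in\mathbb{R},
\end{align*}
where $J(q):=\tfrac12\|\bm K^{\frac12}\nabla q\|_{2,\Omega}^2-(f,q)_\Omega+\langle\beta,q\rangle_{\Gamma_C}$ is the Neumann energy. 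If both compatibility inequalities are strict, the constant direction grows like $\delta|c|$. Combined with sublinearity of $x\mapsto\beta x^+-\alpha x^-$ and Poincar\'e--Wirtinger, your direct method then works. If $\alpha=\beta$ a.e., $I$ really is constant-invariant and your quotient argument is fine.

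The borderline case is different: take $\int_\Omega f=\int_{\Gamma_C}\beta$ with $\alpha<\beta$ on a set of positive measure (and symmetrically $\int_\Omega f=\int_{\Gamma_C}\alpha$). Then $c\mapsto I(q+c)$ is nonincreasing, and minimizing sequences may drift to $c\to+\infty$. A minimizer exists if and only if the Neumann pressure (the minimizer of $J$) is essentially bounded below on $\{\alpha<\beta\}$. That is a regularity property which coercivity cannot provide, and it can fail for merely $L^2$ flux data when $d=3$. The paper does not close this either: it reads primal attainment off the Fenchel--Rockafellar theorem, which under the qualification it verifies yields only dual attainment. So you need strict compatibility (or $\alpha=\beta$) in the pure-flux case, or a separate argument for the borderline case.
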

\begin{proof}
(i) Define $G:(L^2(\Omega))^d \to \mathbb{R}$ and $F:H^1(\Omega) \to \mathbb{R}\cup\{+\infty\}$ by
\begin{align*}
    G(\bm v) &:= \tfrac{1}{2}\|\bm K^{\frac{1}{2}} \bm v\|_{2,\Omega}^2, \\
    F(q)    &:= -(f,q)_\Omega + \langle \beta, q^+\rangle_{\Gamma_C}
                - \langle \alpha, q^-\rangle_{\Gamma_C} + I_{K}(q),
\end{align*}
so that $I(q) = G(\nabla q) + F(q)$ for every $q \in H^1(\Omega)$. We aim to show that, in fact,
\begin{align} \label{eq:dual_sumof_conjugate}
   D(\bm u) = -G^\star(-\bm u) - F^\star(\nabla^\star \bm u),
\end{align}
where $F^\star$ and $G^\star$ denote the Fenchel conjugates of $F$ and $G$, respectively.
By \cite[Prop.~13.19 and Prop.~13.23(iv)]{bauschke2017convex}, it holds that
\begin{equation} \label{eq:G_conjugate}
    G^\star(-\bm v) = \tfrac{1}{2}\|\bm K^{-\frac12}  \bm v\|_{2,\Omega}^2.
\end{equation}
Here and throughout, for $w \in L^2(\Omega)$ and $\zeta \in L^2(\Gamma_C)$ we use the
constraint indicators
\begin{align*}
I_{\{f\}}^{\Omega}(w) &=
\begin{cases}
0, & w = f \text{ a.e. in } \Omega, \\
+\infty, & \text{otherwise},
\end{cases}
&
I_{[\alpha,\beta]}^{\Gamma_C}(\zeta) &=
\begin{cases}
0, & \alpha \le \zeta \le \beta \text{ a.e. on } \Gamma_C, \\
+\infty, & \text{otherwise}.
\end{cases}
\end{align*}
 By definition of $K^\star$,
\begin{align} \label{eq:Kstar-decomp}
I_{K^\star}(\bm v) = I_{\{f\}}^{\Omega}(\operatorname{div}\bm v)
+ I_{[\alpha,\beta]}^{\Gamma_C}(\bm v\cdot\bm n).
\end{align}
For every $\bm v \in (L^2(\Omega))^d$, using the definition of the pairing on $\Gamma_D$ \eqref{eq:pD-pairing}, it holds that
\begin{align*}
\begin{aligned}
F^\star(\nabla^\star \bm v)
&=
\sup_{q\in H^1(\Omega)}
\cbr[1]{
(\bm v,\nabla q)_\Omega
+(f,q)_\Omega
-\langle\beta,q^+\rangle_{\Gamma_C}
+\langle\alpha,q^-\rangle_{\Gamma_C}
-I_K(q)
}
\\
&=
\sup_{\widehat q\in H^1_D(\Omega)}
\cbr[1]{
(\bm v,\nabla(\widehat q+\widehat p_D))_\Omega
+(f,\widehat q+\widehat p_D)_\Omega
-\langle\beta,(\widehat q+\widehat p_D)^+\rangle_{\Gamma_C}
+\langle\alpha,(\widehat q+\widehat p_D)^-\rangle_{\Gamma_C}
}
\\
&=
\begin{cases}
\begin{aligned}
&
I_{\cbr{f}}^\Omega(\operatorname{div}\bm v)
+(\bm v,\nabla\widehat p_D)_\Omega
+(f,\widehat p_D)_\Omega
\\
&\quad
+\displaystyle\sup_{\widehat q\in H^1_D(\Omega)}
\cbr[1]{
\langle \bm v\cdot\bm n,\widehat q\rangle_{\Gamma_C}
-\langle\beta,(\widehat q+\widehat p_D)^+\rangle_{\Gamma_C}
+\langle\alpha,(\widehat q+\widehat p_D)^-\rangle_{\Gamma_C}
}
\end{aligned}
&\text{if }\bm v\in V,\\
+\infty
&\text{otherwise},
\end{cases}
\\
&=
\begin{cases}
\begin{aligned}
&
I_{\cbr{f}}^\Omega(\operatorname{div}\bm v)
+(\bm v,\nabla\widehat p_D)_\Omega
+(f,\widehat p_D)_\Omega
\\
&\quad
+\displaystyle\sup_{\bar q\in \gamma(K)|_{\Gamma_C}}
\cbr[1]{
\langle \bm v\cdot\bm n,\bar q-\widehat p_D\rangle_{\Gamma_C}
-\langle\beta,\bar q^{\,+}\rangle_{\Gamma_C}
+\langle\alpha,\bar q^{\, -}\rangle_{\Gamma_C}
}
\end{aligned}
&\text{if }\bm v\in V,\\
+\infty
&\text{otherwise},
\end{cases}
\\
&=
\begin{cases}
\begin{aligned}
&
I_{\cbr{f}}^\Omega(\operatorname{div}\bm v)
+(\bm v,\nabla\widehat p_D)_\Omega
+(f,\widehat p_D)_\Omega
-\langle \bm v\cdot\bm n,\widehat p_D\rangle_{\Gamma_C}
\\
&\quad
+\displaystyle\sup_{\bar q\in \gamma(K)|_{\Gamma_C}}
\cbr[1]{
\langle \bm v\cdot\bm n,\bar q\rangle_{\Gamma_C}
-\langle\beta,\bar q^{\,+}\rangle_{\Gamma_C}
+\langle\alpha,\bar q^{\, -}\rangle_{\Gamma_C}
}
\end{aligned}
&\text{if }\bm v\in V,\\
+\infty
&\text{otherwise},
\end{cases}
 \end{aligned}
\end{align*}
Since $\gamma(K)|_{\Gamma_C} \subset L^2(\Gamma_C)$ is dense, 
\cite[Prop.~2.1, pp.~271]{ET99} gives
\begin{align*}
&\sup_{\bar q\in\gamma(K)|_{\Gamma_C}}
\cbr[0]{
\langle \bm v\cdot\bm n\,\bar q-\beta\bar q^+
+\alpha\bar q^-,1\rangle_{\Gamma_C}
}
\\
&\quad =
\sup_{\bar q\in L^2(\Gamma_C)}
\cbr[0]{
\langle \bm v\cdot\bm n\,\bar q-\beta\bar q^+
+\alpha\bar q^-,1\rangle_{\Gamma_C}
}
\\
&\quad =
\langle
\sup_{\xi\in\mathbb R}
\cbr[0]{(\bm v\cdot\bm n)\xi-\beta\xi^+ +\alpha\xi^-},
1
\rangle_{\Gamma_C}.
\end{align*}
Computing the supremum pointwise for a.e. $s \in \Gamma_C$ yields
\begin{align*}
\sup_{\xi\in\mathbb R}
\cbr[0]{\bm v(s) \cdot\bm n(s)\xi-\beta\xi^+ +\alpha\xi^-}
&=
\max\cbr[1]{
\sup_{\xi\ge0}\del{\bm v(s) \cdot\bm n(s)-\beta}\xi, \,
\sup_{\xi\le0}\del{\bm v(s) \cdot\bm n(s)-\alpha}\xi
}
\\
&=
\begin{cases}
0,
& \alpha\le\bm v\cdot\bm n\le\beta,\\
+\infty,
& \text{otherwise},
\end{cases}
\end{align*}
Using \eqref{eq:Kstar-decomp}, we have
\begin{align} \label{eq:Fstar-final}
\begin{aligned}
    F^\star(\nabla^\star\bm v) 
  &=
    \begin{cases}
        \begin{aligned}
            &I_{\{f\}}^\Omega(\operatorname{div}\bm v)
            +I_{[\alpha,\beta]}^{\Gamma_C}(\bm v\cdot\bm n)      +\langle p_D,\bm v\cdot\bm n\rangle_{\Gamma_D},
        \end{aligned}
        &\text{if }\bm v\in V,                                      \\
        +\infty,
        &\text{otherwise}.
    \end{cases}                                                       \\
    &
    = I_{K^\star}(\bm v)+\langle p_D,\bm v\cdot\bm n\rangle_{\Gamma_D}.
\end{aligned}
\end{align}
Combining~\eqref{eq:G_conjugate} and~\eqref{eq:Fstar-final} yields \eqref{eq:dual_sumof_conjugate}.

\medskip 
(ii) Both $G$
and $F$ are proper, convex, and lower semi-continuous; $G$ is moreover
strictly convex and continuous on $(L^2(\Omega))^d$, and at $\widehat p_D
\in \mathrm{dom}(F)$ the composition $G\circ\nabla$ is continuous. Thus, the Fenchel--Rockafellar theorem~\cite[Rem.~4.2, pp.~60-61]{ET99} applies and yields
\begin{align}\label{eq:FR}
    \inf_{q\in H^1(\Omega)} I(q)
    \;=\; \sup_{\bm v\in L^2(\Omega)^d}\cbr{-G^\star(-\bm v)-F^\star(\nabla^\star \bm v)} = \sup_{\bm v \in V} D(\bm v),
\end{align}
together with the existence of a maximizer $\bm u\in (L^2(\Omega))^d$ of the
right-hand side and at least one minimizer $p\in H^1(\Omega)$ of the
left-hand side, as well as the strong-duality
identity~\eqref{eq:cts_strong_duality}.
The Fenchel--Rockafellar theorem also
gives the existence of at least one primal minimizer $p\in K$ as well as
the strong duality identity~\eqref{eq:cts_strong_duality}.
If $|\Gamma_D|>0$, the Poincar\'e inequality on
$H^1_{D}(\Omega)$ together with the assumed ellipticity of $\bm K$ shows 
$I$ is
strictly convex on $K$ and the minimizer is unique.
 
\medskip 
(iii)
By the standard Fenchel optimality relations, equality in the strong duality
relation implies
\begin{align*}
    -\bm u\in \partial G(\nabla p),
    \qquad
    \nabla^\star\bm u\in \partial F(p).
\end{align*}
The first inclusion gives \eqref{eq:cts_optimality_1}. For the second inclusion, using the
definition of the subdifferential and rearranging gives, for every $q\in K$,
\begin{align*}
&-(f,q-p)_\Omega
+\langle\beta,q^+\rangle_{\Gamma_C}
-\langle\alpha,q^-\rangle_{\Gamma_C}
-\langle\beta,p^+\rangle_{\Gamma_C}
+\langle\alpha,p^-\rangle_{\Gamma_C}
\ge
(\bm u,\nabla(q-p))_\Omega .
\end{align*}
Since $(p,\bm u) \in K \times K^\star$, a short calculation shows that 
\begin{align*}
\langle\beta,q^+\rangle_{\Gamma_C}
-\langle\alpha,q^-\rangle_{\Gamma_C}
-\langle\beta,p^+\rangle_{\Gamma_C}
+\langle\alpha,p^-\rangle_{\Gamma_C}
&\ge
\langle\bm u\cdot\bm n,q-p\rangle_{\Gamma_C},
\end{align*}
which, by the density of  $\gamma(K)|_{\Gamma_C} \subset L^2(\Gamma_C)$, is equivalent to the following  inclusion in $L^2(\Gamma_C)$:
\begin{align*}
\bm u\cdot\bm n
\in
\partial\Bigl(
 q\mapsto
\langle\beta, q^+\rangle_{\Gamma_C}
-\langle\alpha, q^-\rangle_{\Gamma_C}
\Bigr)(p).
\end{align*}
The equality condition in the Fenchel--Young inequality (\emph{cf}. \cite[Prop.~5.1, pp.~21]{ET99} then yields \eqref{eq:cts_optimality_2}.
The proof is now complete.
\end{proof}

\begin{remark}[Complementarity conditions]\label{rem:complementarity}
Decomposing $p = p^+ - p^-$ on $\Gamma_C$, the boundary optimality
condition~\eqref{eq:cts_optimality_2} is equivalent to
\begin{equation}\label{eq:alternative-optimality.3}
    \langle\beta - \bm u\cdot\bm n,\, p^+\rangle_{\Gamma_C}
    + \langle\bm u\cdot\bm n - \alpha,\, p^-\rangle_{\Gamma_C} = 0.
\end{equation}
Since $\bm u\in K^\star$, we have $\alpha \le \bm u\cdot\bm n \le \beta$ a.e.\ on
$\Gamma_C$, so both terms in~\eqref{eq:alternative-optimality.3} are
non-negative. Since they sum to zero, each vanishes, giving the pointwise
complementarity relations
\begin{equation}\label{eq:complementarity_pointwise}
    (\beta - \bm u\cdot\bm n)\,p^+ = 0
    \quad\text{and}\quad
    (\bm u\cdot\bm n - \alpha)\,p^- = 0
    \qquad\text{a.e.\ on }\Gamma_C.
\end{equation}
Equivalently, for a.e.\ $s\in\Gamma_C$,
\begin{align*}
    p(s) > 0 &\;\Longrightarrow\; \bm u\cdot\bm n(s) = \beta(s)
        &&\text{(upper bound active)},\\
    p(s) < 0 &\;\Longrightarrow\; \bm u\cdot\bm n(s) = \alpha(s)
        &&\text{(lower bound active)},\\
    \alpha(s) < \bm u\cdot\bm n(s) < \beta(s) &\;\Longrightarrow\; p(s) = 0
        &&\text{(bounds inactive)}.
\end{align*}
\end{remark}

\begin{remark}[Boundary sparsity of the pressure]\label{rem:sparsity-boundary}
Suppose $A\subseteq\Gamma_C$ is measurable with
$|A|>0$. If
$\alpha(s) < (\bm u\cdot\bm n)(s) < \beta(s)$ for a.e. $s \in A$, then
the complementarity condition in \Cref{rem:complementarity} forces $p = 0$ a.e. on $A$.
Note that we can equivalently write
\begin{align} \label{eq:alternative_L1}
    \langle\beta,p^+\rangle_{\Gamma_C}
      - \langle\alpha,p^-\rangle_{\Gamma_C} = \tfrac{1}{2}\langle \alpha + \beta, p\rangle_{\Gamma_C} + \tfrac{1}{2} \|(\beta - \alpha)p\|_{1,\Gamma_C},
\end{align}
which reveals the connection with the typical sparsity-promoting $L^1$-regularization {\rm(cf. \cite{Stadler2009Elliptic})}. In particular, in the case of symmetric bounds where $\alpha = -\beta$, \eqref{eq:alternative_L1} reduces to
\begin{align} \label{eq:alternative_L1_sym}
    \langle\beta,p^+\rangle_{\Gamma_C}
      - \langle\alpha,p^-\rangle_{\Gamma_C} =  \|\beta p\|_{1,\Gamma_C},
\end{align}
with $\beta$ playing the role of a regularization parameter.
\end{remark}
 
\section{A posteriori error analysis}\label{sec:cts-aposteriori}
 
We now derive an exact primal--dual a posteriori error identity. Throughout, $K$ and
$K^\star$ denote the admissible primal and dual sets introduced in \eqref{eq:primal_admissible_set} and \eqref{eq:dual_admissible_set}, respectively, and
$(p,\bm u)\in K\times K^\star$ is the primal-dual solution pair from
\Cref{lem:strong-duality}. 
Define the \emph{primal-dual gap estimator}
$\eta_{\rm gap}^2 \colon K\times K^\star \to [0,+\infty)$, for every $q \in K$ and $\bm v \in K^\star$, via
\begin{align} \label{eq:gap_estimator}
\eta_{\rm gap}^2(q,\bm v) := I(q) - D(\bm v).
\end{align}
The primal-dual gap estimator serves as a \emph{distance measure} between a given admissible primal-dual pair $(q,\bm v) \in K \times K^\star$ to  $(p,\bm u) \in K \times K^\star$. The following lemma shows that the primal-dual gap estimator decomposes into contribution measuring the violation of the optimality condition \eqref{eq:cts_optimality_1} and a contribution measuring the violation of the optimality condition \eqref{eq:cts_optimality_2}.

\begin{lemma}[Decomposition of the gap estimator]\label{lem:gap-decomp}
For every $q \in K$ and every $\bm v \in K^\star$,
\begin{align*}
    \eta_{\textup{gap}}^2(q,\bm v) = \eta_{\textup{gap,\,I}}^2(q,\bm v) + \eta_{\textup{gap,\,II} }^2(q,\bm v),
\end{align*}
where
\begin{align*}
    \eta_{\textup{gap,\,I}}^2(q,\bm v) & := \tfrac{1}{2}\|\bm K^{\frac{1}{2}} \nabla q
                                   + \bm K^{-\frac{1}{2}} \bm v\|_{2,\Omega}^2,\\
   \eta_{\textup{gap,\,II}}^2(q,\bm v)
    & := -  \langle\bm v\cdot \bm n, q\rangle_{\Gamma_C}
    + \langle \beta, q^+\rangle_{\Gamma_C} - \langle \alpha, q^-\rangle_{\Gamma_C}.
\end{align*}
\end{lemma}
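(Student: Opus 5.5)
The proof is a direct computation: expand both functionals on the admissible sets and regroup the terms. For $q\in K$ and $\bm v\in K^\star$ the indicator terms vanish, so
\begin{align*}
I(q) - D(\bm v) = \tfrac12\|\bm K^{\frac12}\nabla q\|_{2,\Omega}^2 + \tfrac12\|\bm K^{-\frac12}\bm v\|_{2,\Omega}^2 - (f,q)_\Omega + \langle\beta,q^+\rangle_{\Gamma_C} - \langle\alpha,q^-\rangle_{\Gamma_C} + \langle p_D,\bm v\cdot\bm n\rangle_{\Gamma_D}.
\end{align*}
First, I would complete the square in the two quadratic terms, using the symmetry of $\bm K$:
\begin{align*}
\tfrac12\|\bm K^{\frac12}\nabla q\|_{2,\Omega}^2 + \tfrac12\|\bm K^{-\frac12}\bm v\|_{2,\Omega}^2 = \eta_{\textup{gap,\,I}}^2(q,\bm v) - (\bm v,\nabla q)_\Omega.
\end{align*}
After this step, it remains to show that
\begin{align*}
-(\bm v,\nabla q)_\Omega - (f,q)_\Omega + \langle p_D,\bm v\cdot\bm n\rangle_{\Gamma_D} = -\langle\bm v\cdot\bm n,q\rangle_{\Gamma_C}.
\end{align*}

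Second, I would establish this boundary identity with the lifting-based pairing of Definition~\ref{def:pD-pairing}. Since $\textup{div}\,\bm v=f$, the pairing is defined, and its value is independent of the lifting. Because $q\in K$ satisfies $\gamma(q)|_{\Gamma_D}=p_D$, I can take $\widehat p_D:=q$ itself as the lifting. Then \eqref{eq:pD-pairing} reads
\begin{align*}
\langle p_D,\bm v\cdot\bm n\rangle_{\Gamma_D} = (\bm v,\nabla q)_\Omega + (f,q)_\Omega - \langle\bm v\cdot\bm n,q\rangle_{\Gamma_C}.
\end{align*}
Substituting this gives exactly the required identity, and the decomposition $\eta_{\textup{gap}}^2=\eta_{\textup{gap,\,I}}^2+\eta_{\textup{gap,\,II}}^2$ follows.

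The main point needing care is the claim that the pairing does not depend on the lifting. I would verify it directly. Two liftings differ by some $w\in H^1(\Omega)$ with vanishing trace on $\Gamma_D$. By the Green formula \eqref{eq:green-full-boundary} together with $\textup{div}\,\bm v=f$,
\begin{align*}
(\bm v,\nabla w)_\Omega + (f,w)_\Omega = \langle\gamma_{\bm n}(\bm v),\gamma(w)\rangle_{\partial\Omega}.
\end{align*}
Since $\gamma(w)$ vanishes on $\Gamma_D$, it equals $E_0^C$ of its restriction to $\Gamma_C$, which lies in $H^{\frac12}_{00}(\Gamma_C)$. By \eqref{eq:normal-trace-restriction} and $\bm v\in V$, the right-hand side therefore equals the $L^2(\Gamma_C)$ pairing $\langle\bm v\cdot\bm n,w\rangle_{\Gamma_C}$. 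This makes the two expressions agree. The same reasoning also shows that $\langle\bm v\cdot\bm n,q\rangle_{\Gamma_C}$ is a genuine $L^2$ inner product, since $\gamma(q)|_{\Gamma_C}\in L^2(\Gamma_C)$. Hence every term in the decomposition is finite.
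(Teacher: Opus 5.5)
Your proof is correct and follows the paper's argument. You expand $I(q)-D(\bm v)$ on the admissible sets, rewrite $-(f,q)_\Omega+\langle p_D,\bm v\cdot\bm n\rangle_{\Gamma_D}$ as $(\bm v,\nabla q)_\Omega-\langle\bm v\cdot\bm n,q\rangle_{\Gamma_C}$, and complete the square using the symmetry of $\bm K$. Choosing $q$ itself as the lifting in Definition~\ref{def:pD-pairing} is precisely the rigorous content behind the paper's brief ``integrating by parts and rearranging'' step, which is needed because $p_D$ and $\bm v\cdot\bm n|_{\Gamma_D}$ are not directly in duality.
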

\begin{proof}
Note that for all $q \in K$ and $\bm v \in K^\star$,
\begin{align*}
    \eta_{\rm gap}^2(q,\bm v)
    &= \tfrac{1}{2} \|\bm K^{\frac12} \nabla q\|_{2,\Omega}^2
       - (f,q)_\Omega
       +\langle \beta, q^+\rangle_{\Gamma_C}
                - \langle \alpha, q^-\rangle_{\Gamma_C} 
 +\tfrac{1}{2} \|\bm K^{-\frac{1}{2}}\bm v\|_{2,\Omega}^2
       +\langle p_D,\bm v\cdot \bm n\rangle_{\Gamma_D} \\
      &= \tfrac{1}{2} \|\bm K^{\frac12} \nabla q\|_{2,\Omega}^2
       - (\textup{div}\,\bm v ,q)_\Omega
       +\langle \beta, q^+\rangle_{\Gamma_C}
                - \langle \alpha, q^-\rangle_{\Gamma_C} 
 +\tfrac{1}{2} \|\bm K^{-\frac{1}{2}}\bm v\|_{2,\Omega}^2
       +\langle p_D,\bm v\cdot \bm n\rangle_{\Gamma_D},
\end{align*}
where we have used the fact that $\textup{div}\,\bm v = f$. Integrating by parts and rearranging then yields
\begin{align*}
    \eta_{\rm gap}^2(q,\bm v)
    &= \tfrac{1}{2} \|\bm K^{\frac12} \nabla q\|_{2,\Omega}^2 + (\bm v,\nabla q)_\Omega
       - \langle \bm v\cdot\bm n, q\rangle_{\Gamma_C}
       +\langle \beta, q^+\rangle_{\Gamma_C}
                - \langle \alpha, q^-\rangle_{\Gamma_C} 
 +\tfrac{1}{2} \|\bm K^{-\frac{1}{2}}\bm v\|_{2,\Omega}^2.
\end{align*}
Since $(\bm v,\nabla q)_\Omega
= (\bm K^{\frac12} \nabla q,\,\bm K^{-\frac12}\bm v)_\Omega$, 
the result follows after completing the square.
\end{proof}
 
Next, we identify \emph{optimal strong convexity measures} $\rho_I^2: K \to [0,+\infty)$ and $\rho_{-D}^2: K^\star \to [0,+\infty)$ for the primal energy functional \eqref{eq:predual_functional}  at a primal solution $p \in K$, and for the negative of the dual energy functional \eqref{eq:dual_functional}  at the dual solution $\bm u \in K^\star$. Let
\begin{align*}
    \rho_I^2(q) &:= I(q) - I(p), \\
    \rho_{-D}^2(\bm v) &:= -D(\bm v) + D(\bm u).
\end{align*}
 The following lemma shows that, similar to the primal-dual gap estimator, the strong convexity measures decompose into a contribution measuring the violation of the optimality condition \eqref{eq:cts_optimality_1} and a contribution measuring the violation of the optimality condition \eqref{eq:cts_optimality_2}.
\begin{lemma}[Optimal strong convexity measures]\label{lem:rho-measures}
For every $q \in K$ and every $\bm v \in K^\star$,
\begin{align}
    \rho_I^2(q) &= \tfrac{1}{2}\|\bm K^{\frac12}\nabla (q - p)\|_{2,\Omega}^2
                  - \langle \bm u \cdot \bm n,\, q\rangle_{\Gamma_C}
                  + \langle \beta, q^+\rangle_{\Gamma_C} - \langle \alpha, q^-\rangle_{\Gamma_C}, \label{eq:rho-I} \\
    \rho_{-D}^2(\bm v) &= \tfrac{1}{2} \|\bm K^{-\frac{1}{2}}(\bm v - \bm u)\|_{2,\Omega}^2
                  - \langle \bm v \cdot \bm n,\, p\rangle_{\Gamma_C}
                  +\langle \beta, p^+\rangle_{\Gamma_C} - \langle \alpha, p^-\rangle_{\Gamma_C}. \label{eq:rho-D}
\end{align}
\end{lemma}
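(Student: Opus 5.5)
The plan is to reduce both identities to the gap decomposition of \Cref{lem:gap-decomp}, combined with strong duality \eqref{eq:cts_strong_duality}.

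For \eqref{eq:rho-I}, I would write
\[
\rho_I^2(q)=I(q)-I(p)=I(q)-D(\bm u)=\eta_{\rm gap}^2(q,\bm u),
\]
which uses strong duality $I(p)=D(\bm u)$. Applying \Cref{lem:gap-decomp} with $\bm v=\bm u\in K^\star$ gives
\[
\rho_I^2(q)=\tfrac12\|\bm K^{\frac12}\nabla q+\bm K^{-\frac12}\bm u\|_{2,\Omega}^2-\langle\bm u\cdot\bm n,q\rangle_{\Gamma_C}+\langle\beta,q^+\rangle_{\Gamma_C}-\langle\alpha,q^-\rangle_{\Gamma_C}.
\]
The optimality condition \eqref{eq:cts_optimality_1}, $\bm u=-\bm K\nabla p$, gives $\bm K^{-\frac12}\bm u=-\bm K^{\frac12}\nabla p$. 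The first term is therefore $\tfrac12\|\bm K^{\frac12}\nabla(q-p)\|^2_{2,\Omega}$, and this is exactly \eqref{eq:rho-I}.

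For \eqref{eq:rho-D}, I would proceed symmetrically:
\[
\rho_{-D}^2(\bm v)=D(\bm u)-D(\bm v)=I(p)-D(\bm v)=\eta_{\rm gap}^2(p,\bm v),
\]
with $p\in K$. Applying \Cref{lem:gap-decomp} with $q=p$ yields the term $\tfrac12\|\bm K^{\frac12}\nabla p+\bm K^{-\frac12}\bm v\|^2_{2,\Omega}$ together with the boundary terms $-\langle\bm v\cdot\bm n,p\rangle_{\Gamma_C}+\langle\beta,p^+\rangle_{\Gamma_C}-\langle\alpha,p^-\rangle_{\Gamma_C}$. Again by \eqref{eq:cts_optimality_1}, $\bm K^{\frac12}\nabla p=-\bm K^{-\frac12}\bm u$, so the first term is $\tfrac12\|\bm K^{-\frac12}(\bm v-\bm u)\|^2_{2,\Omega}$. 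This gives \eqref{eq:rho-D}.

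Both identities are then direct consequences of earlier results, so I expect no genuine obstacle. The one point needing care is that \Cref{lem:gap-decomp} is applicable: this requires $(q,\bm u)$ and $(p,\bm v)$ to lie in $K\times K^\star$, which holds because $p\in K$ and $\bm u\in K^\star$ by \Cref{lem:strong-duality}. Since that lemma already handles the integration by parts and the Dirichlet pairing \eqref{eq:pD-pairing}, it remains only to check that the boundary pairings on $\Gamma_C$ are $L^2$ pairings, which holds since the fields lie in $V$.

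If one wishes to see nonnegativity directly, the boundary terms in \eqref{eq:rho-I} can be rewritten via \eqref{eq:cts_optimality_2}. They are, however, nonnegative already pointwise, because $\alpha\le\bm u\cdot\bm n\le\beta$.
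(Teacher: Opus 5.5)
Your proof is correct, but it takes a different route from the paper's.

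The paper expands $I(q)-I(p)$ and $-D(\bm v)+D(\bm u)$ directly. It substitutes $f=\operatorname{div}\bm u$ (resp.\ uses $\operatorname{div}(\bm v-\bm u)=0$), integrates by parts, and then invokes both optimality relations: \eqref{eq:cts_optimality_1} to complete the square, and \eqref{eq:cts_optimality_2} to trade $\langle\bm u\cdot\bm n,p\rangle_{\Gamma_C}$ for $\langle\beta,p^+\rangle_{\Gamma_C}-\langle\alpha,p^-\rangle_{\Gamma_C}$.

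You instead use \eqref{eq:cts_strong_duality} to identify $\rho_I^2(q)=\eta_{\rm gap}^2(q,\bm u)$ and $\rho_{-D}^2(\bm v)=\eta_{\rm gap}^2(p,\bm v)$, and then read both formulas off \Cref{lem:gap-decomp}. Only \eqref{eq:cts_optimality_1} is needed; the content of \eqref{eq:cts_optimality_2} is absorbed into $I(p)=D(\bm u)$. Your route has three advantages:
\begin{itemize}
\item it is shorter;
\item it delegates the integration by parts and the treatment of the Dirichlet pairing \eqref{eq:pD-pairing}, including for the divergence-free difference $\bm v-\bm u$, to a lemma already proved;
\item it makes explicit that each convexity measure is the gap with one argument frozen at the exact solution.
\end{itemize}

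The paper's computation, by contrast, uses only the optimality system \eqref{eq:cts_optimality} and not the value identity, so strong duality enters only in \Cref{thm:aposteriori-id}. However, \Cref{lem:gap-decomp} evaluated at $(p,\bm u)$ writes $I(p)-D(\bm u)$ as a sum of two nonnegative terms that vanish exactly when \eqref{eq:cts_optimality} holds. So the optimality system and strong duality are equivalent here, and the difference between the two proofs is one of organization rather than substance.

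Your closing observation is also correct: the boundary terms equal $(\beta-\bm u\cdot\bm n)q^++(\bm u\cdot\bm n-\alpha)q^-\ge0$ pointwise, with no appeal to \eqref{eq:cts_optimality_2} needed.
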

\begin{proof}
To show~\eqref{eq:rho-I}, we expand $\rho_I^2(q)=I(q)-I(p)$:
\begin{align*}
    \rho_I^2(q)
    &= \tfrac{1}{2} \|\bm K^{\frac12}\nabla q\|_{2,\Omega}^2
       -\tfrac{1}{2} \|\bm K^{\frac12} \nabla p\|_{2,\Omega}^2
       - (f,q-p)_\Omega +\langle \beta, q^+ - p^+\rangle_{\Gamma_C}
                - \langle \alpha, q^- - p^-\rangle_{\Gamma_C}.
\end{align*}
Substituting $f=\textup{div}\,\bm u$ integrating by parts, and using the fact that that $q-p \in H^1_D(\Omega)$ and $\bm u = -\bm K\nabla p$ by \eqref{eq:cts_optimality_1},
\begin{align*}
    \rho_I^2(q)
    &= \tfrac{1}{2}\|\bm K^{\frac12}\nabla(q-p)\|_{2,\Omega}^2
       - \langle \bm u\cdot\bm n,\, q-p\rangle_{\Gamma_C}
       +\langle \beta, q^+ - p^+\rangle_{\Gamma_C}
                - \langle \alpha, q^- - p^-\rangle_{\Gamma_C} \\
    &= \tfrac{1}{2}\|\bm K^{\frac12}\nabla(q-p)\|_{2,\Omega}^2
       - \langle \bm u\cdot\bm n,\, q\rangle_{\Gamma_C}
       +\langle \beta, q^+\rangle_{\Gamma_C}
                - \langle \alpha, q^- \rangle_{\Gamma_C},
\end{align*}%
where we have used that $\langle\bm u\cdot \bm n, p\rangle_{\Gamma_C}
    = \langle \beta, p^+\rangle_{\Gamma_C} - \langle \alpha, p^-\rangle_{\Gamma_C}$ by~\eqref{eq:cts_optimality_2} to pass to the second line.

To show~\eqref{eq:rho-D}, we expand $\rho_{-D}^2(\bm v)=-D(\bm v)+D(\bm u)$:
\begin{align*}
    \rho_{-D}^2(\bm v)
    &= \tfrac{1}{2}\|\bm K^{-\frac{1}{2}}\bm v\|_{2,\Omega}^2
       - \tfrac{1}{2}\|\bm K^{-\frac{1}{2}}\bm u\|_{2,\Omega}^2
       + \langle p_D, \bm v\cdot\bm n -\bm u\cdot\bm n\rangle_{\Gamma_D} \\
        &=  \tfrac{1}{2}\|\bm K^{-\frac{1}{2}}(\bm v-\bm u)\|_{2,\Omega}^2
       + (\bm K^{-\frac{1}{2}}(\bm v-\bm u),\,\bm K^{-\frac{1}{2}} \bm u)_\Omega
       + \langle p_D,(\bm v-\bm u)\cdot\bm n\rangle_{\Gamma_D}
\end{align*}
Then, using \eqref{eq:cts_optimality_1} and integrating by parts,
\begin{align*}
        \rho_{-D}^2(\bm v) 
    &=  \tfrac{1}{2}\|\bm K^{-\frac{1}{2}}(\bm v-\bm u)\|_{2,\Omega}^2
       - (\bm v-\bm u, \nabla p)_\Omega
       + \langle p_D,(\bm v-\bm u)\cdot\bm n\rangle_{\Gamma_D} \\
    &=  \tfrac{1}{2}\|\bm K^{-\frac{1}{2}}(\bm v-\bm u)\|_{2,\Omega}^2
       - \langle \bm v \cdot \bm n -\bm u \cdot \bm n, p \rangle_{\Gamma_C},
\end{align*}
where we have used the fact that $\textup{div}\,(\bm v - \bm u) = 0$. Finally, another application of \eqref{eq:cts_optimality_2} yields
\begin{align*}
        \rho_{-D}^2(\bm v) 
    &=  \tfrac{1}{2}\|\bm K^{-\frac{1}{2}}(\bm v-\bm u)\|_{2,\Omega}^2
       - \langle \bm v \cdot \bm n, p \rangle_{\Gamma_C} +\langle \beta, p^+\rangle_{\Gamma_C} - \langle \alpha, p^-\rangle_{\Gamma_C}.
\end{align*}
The proof is now complete.
\end{proof}

We end this section by deriving an \emph{a posteriori} error identity that characterizes the \emph{primal-dual total error} $\rho_{\rm tot}^2 : K \times K^\star \to [0,+\infty)$, defined for every $(q,\bm v) \in K \times K^\star$ by
\begin{align} \label{eq:total_error}
    \rho_{\rm tot}^2(q,\bm v) := \rho_{I}^2(q) + \rho_{-D}^2(\bm v),
\end{align}
in terms of the primal-dual gap estimator \eqref{eq:gap_estimator}:
\begin{theorem}[A posteriori error identity]\label{thm:aposteriori-id}
For every $q\in K$ and every $\bm v\in K^\star$,
\begin{equation*}
    \rho_{\rm tot}^2(q,\bm v) = \eta_{\rm gap}^2(q,\bm v).
\end{equation*}
\end{theorem}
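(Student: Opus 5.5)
The identity follows directly from the definitions once strong duality is available. By definition,
\[
\rho_{\rm tot}^2(q,\bm v)=I(q)-I(p)-D(\bm v)+D(\bm u),
\]
and by the strong duality relation \eqref{eq:cts_strong_duality} of \Cref{lem:strong-duality}(ii) we have $I(p)=D(\bm u)$. The two middle terms therefore cancel, leaving $I(q)-D(\bm v)=\eta_{\rm gap}^2(q,\bm v)$ by \eqref{eq:gap_estimator}. Hence the main proof is a single line, and I would present it this way.

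I would also add a consistency check that avoids the abstract duality theorem and exposes the structure. Starting from the explicit formulas in \Cref{lem:rho-measures}, I would sum \eqref{eq:rho-I} and \eqref{eq:rho-D} and compare the result with the decomposition in \Cref{lem:gap-decomp}.

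For the quadratic terms, substitute $\bm u=-\bm K\nabla p$ from \eqref{eq:cts_optimality_1}. Then
\[
\tfrac12\|\bm K^{\frac12}\nabla(q-p)\|^2+\tfrac12\|\bm K^{-\frac12}(\bm v-\bm u)\|^2
=\tfrac12\|\bm K^{\frac12}\nabla q+\bm K^{-\frac12}\bm u\|^2+\tfrac12\|\bm K^{-\frac12}\bm v+\nabla p\,\bm K^{\frac12}\|^2 .
\]
Expanding $\tfrac12\|\bm K^{\frac12}\nabla q+\bm K^{-\frac12}\bm v\|^2$ and comparing leaves the cross term $(\bm v-\bm u,\nabla(q-p))_\Omega$. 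Since $\operatorname{div}(\bm v-\bm u)=0$ and $q-p\in H^1_D(\Omega)$, the Green formula \eqref{eq:green-full-boundary} turns this into a boundary term $\langle(\bm v-\bm u)\cdot\bm n,q-p\rangle_{\Gamma_C}$.

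This boundary term combines with the $\Gamma_C$ terms of \eqref{eq:rho-I}–\eqref{eq:rho-D}. Using \eqref{eq:cts_optimality_2} once more, they reduce to exactly $\eta_{\rm gap,II}^2$.

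The only delicate point in this second route is justifying the Green formula when the traces of $q-p$ on $\Gamma_C$ and the normal fluxes lie only in $H^{1/2}$ and $L^2$, and the Dirichlet part is understood through the lifted pairing of \Cref{def:pD-pairing}. That justification is already implicit in the proofs of \Cref{lem:gap-decomp,lem:rho-measures}. The one-line proof via strong duality sidesteps the issue entirely.
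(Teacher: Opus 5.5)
Your main argument is exactly the paper's proof: the strong duality relation \eqref{eq:cts_strong_duality} gives $I(p)=D(\bm u)$, and the definitions of $\rho_{\rm tot}^2$ and $\eta_{\rm gap}^2$ then collapse to $I(q)-D(\bm v)$. Your supplementary check via \Cref{lem:rho-measures} and \Cref{lem:gap-decomp} is also correct: the leftover term $-(\bm v-\bm u,\nabla(q-p))_\Omega$ equals $-\langle(\bm v-\bm u)\cdot\bm n,q-p\rangle_{\Gamma_C}$, and \eqref{eq:cts_optimality_2} then reduces the $\Gamma_C$ terms to $\eta_{\rm gap,II}^2$. It does not genuinely bypass duality, however, because it relies on the optimality conditions \eqref{eq:cts_optimality}, which the paper itself derives from the Fenchel--Rockafellar theorem.
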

\begin{proof}
By the strong duality identity~\eqref{eq:cts_strong_duality}, $I(p)=D(\bm u)$.
Hence
\begin{align*}
    \rho_{\rm tot}^2(q,\bm v)
    &= \del{I(q) - I(p)} + \del{-D(\bm v) + D(\bm u)} = I(q) - D(\bm v)  = \eta_{\rm gap}^2(q,\bm v). \qedhere
\end{align*}
\end{proof}

\section{The discretized flux-constrained flow problem}\label{sec:discrete-flux-constrained}

In this section, we discuss the discretized flux-constrained Darcy problem, employing the Raviart--Thomas element for the dual formulation and the Crouzeix--Raviart element for the primal formulation, and transfer the duality theory of \Cref{sec:flux-constrained} to the discrete level: we prove well-posedness of the discrete dual problem (\Cref{thm:disc-dual-exist}) and establish discrete strong duality together with the discrete convex optimality conditions (\Cref{thm:disc-strong-duality}).
\subsection{Discrete dual problem}\label{ssec:disc-dual}
We begin by formulating the discrete counterpart of the dual problem \eqref{eq:cts_max_prob}. For given data
$\alpha, \beta, f, \bm K,$ and $p_D$ satisfying
\Cref{assumption:data-assumption}, we define the discrete data $\bm K_{h}:=\Pi_{h}\bm K \in (\mathcal{L}_h^0(\mathcal{T}_h))^{d \times d}$, $ f_{h}:=\Pi_{h}f \in \mathcal{L}_h^0(\mathcal{T}_h)$, $\alpha_{h}:=\pi_{h}\alpha \in \mathcal{L}_h^0(\mathcal{S}_h^C)$, $\beta_{h}:=\pi_{h}\beta \in \mathcal{L}_h^0(\mathcal{S}_h^C)$, and $  p_{D}^{h}:=\Pi_{h}^{\mathrm{cr}}\widehat p_{D} \in \mathcal{S}^{1,cr}(\mathcal{T}_h)$,
 where $\widehat p_{D}\in
H^{1}(\Omega)$ is any fixed lifting of the Dirichlet data $p_{D} \in H^{\frac{1}{2}}(\Gamma_D)$. We then define a functional
$D_h^{rt} : \mathcal{R}T^0(\mathcal{T}_h) \to \mathbb{R}\cup\{+\infty\}$ by
\begin{align} \label{eq:discrete_dual_functional}
    D_h^{rt}(\bm v_h)
&:=  -\tfrac{1}{2}\| \bm K_h^{-\frac{1}{2}} \Pi_h \bm v_h\|_{2,\Omega}^2
    - I_{K_h^\star}(\bm v_h) -\langle p_D^h,\bm v_h\cdot \bm n\rangle_{\Gamma_D}.
\end{align}
Here, we have defined the indicator functional
\begin{align*}
    I_{K_h^\star}(\bm {v}_h) &:= \begin{cases}
        0, \quad \bm v_h \in K_h^\star, \\
        +\infty, \quad \text{else},
    \end{cases}
\end{align*}
with the discrete dual admissible set defined as
\begin{equation}
  \label{eq:disc-Kh}
  K_h^\star := \cbr{\bm{v}_h\in\mathcal{R}T^0(\mathcal{T}_h)\,:\,
     \textup{div}\,\bm{v}_h=f_h\ \text{in}\ \Omega \text{ and }
     \alpha_h\le\bm v_h\cdot \bm n\le\beta_h\ \text{on }\mathcal{S}_h^C}.
\end{equation}
We seek $\bm u_h^{rt} \in
\mathcal{R}T^0(\mathcal{T}_h)$ solving the optimization problem
\begin{align} \label{eq:inner_max_unconstrained}
D_h^{rt}(\bm u_h^{rt})
= \sup_{\bm v_h \in K_h^\star} D_h^{rt}(\bm v_h),
\end{align}
which we will henceforth refer to as the \emph{discrete dual problem}. The
remainder of this subsection is devoted to showing that
\eqref{eq:inner_max_unconstrained} admits a unique solution.

\begin{proposition}
\label{prop:disc-K-weakly-closed}
The discrete admissible dual set $K_h^\star$ is non-empty if either: (i)
$\Gamma_D \ne \emptyset$, or (ii) the following discrete compatibility condition holds:
\begin{equation}\label{eq:disc-compatibility}
   \sum_{S\in\mathcal{S}_h^C}|S|\,\alpha_h|_S
   \le \sum_{T\in\mathcal{T}_h}|T|\,f_h|_T
   \le \sum_{S\in\mathcal{S}_h^C}|S|\,\beta_h|_S.
\end{equation}
 Moreover, $K_h^\star$ is convex and closed in
$\mathcal{R}T^0(\mathcal{T}_h)$.
\end{proposition}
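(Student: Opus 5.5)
Convexity and closedness are immediate and will be dispatched first. $K_h^\star$ is the intersection of the affine subspace $\{\bm v_h\in\mathcal{R}T^0(\mathcal{T}_h):\textup{div}\,\bm v_h=f_h\}$ with finitely many half-spaces. Indeed, on each $S\in\mathcal S_h^C$ the normal trace $\bm v_h\cdot\bm n|_S$ is a constant, namely the degree of freedom attached to $\bm\psi_S$ (up to orientation). The constraint $\alpha_h|_S\le\bm v_h\cdot\bm n|_S\le\beta_h|_S$ is therefore a pair of linear inequalities in the coefficient vector. Since $\mathcal{R}T^0(\mathcal{T}_h)$ is finite dimensional and all these conditions are continuous linear (in)equalities, the intersection is convex and closed.

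For non-emptiness, I will build an element directly from prescribed boundary fluxes rather than solving a discrete PDE. Fix facetwise constants $g_h\in\mathcal L_h^0(\mathcal S_h^C)$ with $\alpha_h\le g_h\le\beta_h$ (to be chosen). I then seek $\bm v_h\in\mathcal{R}T^0(\mathcal{T}_h)$ with $\textup{div}\,\bm v_h=f_h$, $\bm v_h\cdot\bm n=g_h$ on $\mathcal S_h^C$, and free normal flux on $\mathcal S_h^D$. I would obtain it by solving the lowest-order mixed problem
\begin{align*}
(\bm K_h^{-1}\bm w_h,\bm z_h)_\Omega-(r_h,\textup{div}\,\bm z_h)_\Omega&=0 &&\forall\,\bm z_h\in\mathcal{R}T^0_C(\mathcal{T}_h),\\
(\textup{div}\,\bm w_h,s_h)_\Omega&=(f_h,s_h)_\Omega-(\textup{div}\,\bm g^{rt}_h,s_h)_\Omega &&\forall\,s_h\in\mathcal L_h^0(\mathcal{T}_h),
\end{align*}
with $\bm w_h\in\mathcal{R}T^0_C(\mathcal{T}_h)$, where $\bm g^{rt}_h:=\sum_{S\in\mathcal S_h^C}g_h|_S\bm\psi_S$ (orientation chosen outward), and then set $\bm v_h:=\bm g_h^{rt}+\bm w_h$. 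Well-posedness rests on the discrete inf--sup condition for the pair $(\mathcal{R}T^0_C,\mathcal L_h^0)$, which is standard when $|\Gamma_D|>0$. When $\Gamma_D=\emptyset$, surjectivity of $\textup{div}:\mathcal{R}T^0_0(\mathcal T_h)\to\mathcal L^0_h(\mathcal T_h)$ holds only onto mean-zero functions, so the data must satisfy $\int_\Omega f_h\dif x=\int_{\Gamma_C} g_h\dif s$. The main step is verifying these surjectivity statements of $\textup{div}$. I would cite the classical inf--sup theory \cite{Boffi:book}. Alternatively, I could argue via \Cref{lem:disc-lifting}, or combinatorially via a spanning tree of the dual graph of $\mathcal T_h$ rooted at a Dirichlet facet, or at an arbitrary element in the pure-flux case.

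For case (i), any $g_h$ works, e.g. $g_h=\tfrac12(\alpha_h+\beta_h)$; since $\alpha\le\beta$ a.e., facet averages satisfy $\alpha_h\le\beta_h$. For case (ii), I repeat the intermediate value argument of \Cref{prop:K-weakly-closed}. Set $h(t):=(1-t)\sum_S|S|\alpha_h|_S+t\sum_S|S|\beta_h|_S$. By \eqref{eq:disc-compatibility} there is $t^\star\in[0,1]$ with $h(t^\star)=\sum_T|T|f_h|_T=\int_\Omega f_h\dif x$. Taking $g_h:=(1-t^\star)\alpha_h+t^\star\beta_h$ gives the discrete Neumann compatibility, and the mean-zero surjectivity yields $\bm w_h$, hence $\bm v_h\in K_h^\star$. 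Note that \eqref{eq:disc-compatibility} follows from \eqref{eq:compatibility}, since $\pi_h$ and $\Pi_h$ preserve integrals; I would remark on this in passing.
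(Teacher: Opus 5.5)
Your proposal is correct and follows essentially the same route as the paper. Both arguments prescribe a facetwise flux $g_h$ with $\alpha_h\le g_h\le\beta_h$ on $\mathcal S_h^C$, obtain a Raviart--Thomas field with $\mathrm{div}\,\bm v_h=f_h$ from a lowest-order mixed problem via the discrete inf--sup condition (onto mean-zero data when $\Gamma_D=\emptyset$), and fix $g_h$ in the pure-flux case by the intermediate value argument. Your explicit splitting $\bm v_h=\bm g_h^{rt}+\bm w_h$, which in fact only needs surjectivity of $\mathrm{div}$ on $\mathcal{R}T^0_C(\mathcal{T}_h)$, and your remark that \eqref{eq:disc-compatibility} is implied by (indeed equivalent to) \eqref{eq:compatibility} are refinements of the same argument rather than a different approach.
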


\begin{proof}
The proof that $K_h^\star$ is convex and closed follows from standard
arguments, and therefore we only prove here that $K_h^\star$ is non-empty.
\medskip

(i) Suppose first that $\Gamma_D \ne \emptyset$. Fix any $g_h \in
\mathcal{L}_h^0(\mathcal{S}_h^C)$ with $\alpha_h \le g_h \le \beta_h$, and consider the following
discretization of the Darcy problem with homogeneous pressure:
find $(\bm v_h, r_h) \in \mathcal{R}T^0(\mathcal{T}_h) \times
\mathcal{L}_h^0(\mathcal{T}_h)$ such that 
\begin{subequations} \label{eq:disc-Kstar_mixed}
\begin{align}
(\bm K_h^{-1}\Pi_h\bm v_h, \Pi_h\bm w_h)_\Omega
- (r_h, \textup{div}\,\bm w_h)_\Omega
&= 0, 
\quad \forall\, \bm w_h \in \mathcal{R}T^0_C(\mathcal{T}_h), \label{eq:disc-Kstar_mixed_a}\\
(\textup{div}\,\bm v_h, s_h)_\Omega
&= (f_h, s_h)_\Omega,
\quad \forall\, s_h \in \mathcal{L}^0(\mathcal{T}_h), \label{eq:disc-Kstar_mixed_b} \\
\bm v_h \cdot \bm n|_{S}
&= g_h,
\quad \forall \, S \in \mathcal{S}_h^C. \label{eq:disc-Kstar_mixed_c}
\end{align}
\end{subequations}
The ellipticity of $\bm K_h^{-1}$ and the discrete inf--sup condition
for the pair $\mathcal{R}T^0(\mathcal{T}_h) \times
\mathcal{L}_h^0(\mathcal{T}_h)$ (\emph{cf}. \cite{Boffi:book}), guarantees that
\eqref{eq:disc-Kstar_mixed_a}--\eqref{eq:disc-Kstar_mixed_c} admits a unique solution $(\bm v_h, r_h)$. In particular, $\bm v_h \in K_h^\star$.
\medskip

(ii) If $\Gamma_D = \emptyset$, then \eqref{eq:disc-Kstar_mixed_a}--\eqref{eq:disc-Kstar_mixed_c} is well-posed for a given $g_h \in \mathcal{L}_h^0(\mathcal{S}_h^C)$
if and only if the discrete compatibility condition for the Neumann problem holds:
\begin{align} \label{eq:disc-neumann_compatibility}
    \sum_{T\in\mathcal{T}_h}|T|\,f_h|_T = \sum_{S\in\mathcal{S}_h^C}|S|\,g_h|_S,
\end{align}
which follows from testing~\eqref{eq:disc-Kstar_mixed_b} with $s_h \equiv 1$ and applying the divergence theorem. Thus, $K_h^\star$ is non-empty if one can construct a
$g_h^\star \in \mathcal{L}_h^0(\mathcal{S}_h^C)$ with $\alpha_h \le
g_h^\star \le \beta_h$ that further
satisfies~\eqref{eq:disc-neumann_compatibility}. For a.e. $s \in \Gamma_C$ and $t \in [0,1]$, define $h(t) = (1-t) \sum_{S \in \mathcal{S}_h^C}\alpha_h + t \sum_{S \in \mathcal{S}_h^C}\beta_h$. The map $t \mapsto h(t)$
is continuous with $h(0) = \sum_{S \in \mathcal{S}_h^C} \alpha_h$ and $h(1) = \sum_{S \in \mathcal{S}_h^C} \beta_h$. Thus, if \eqref{eq:disc-compatibility} holds, then the Intermediate Value Theorem guarantees $t^\star \in [0,1]$ such that $h(t^\star) = \sum_{T\in\mathcal{T}_h}|T|\,f_h|_T$. Therefore, the choice $g_h^\star|_{S} = (1-t^\star) \alpha_h|_{S} + t^\star \beta_h|_{S}$ for all $S \in \mathcal{S}_h^C$ yields \eqref{eq:disc-neumann_compatibility}. The result follows.
\qedhere
\end{proof}

\begin{theorem}[Existence and uniqueness for the discrete dual problem]\label{thm:disc-dual-exist}
Suppose that either (i) $\Gamma_D \ne \emptyset$, or (ii) the discrete
compatibility condition~\eqref{eq:disc-compatibility} holds if
$\Gamma_D=\emptyset$. Then, there exists a unique solution $\bm u_h^{rt} \in
K_h^\star$ of problem~\eqref{eq:inner_max_unconstrained}.
\end{theorem}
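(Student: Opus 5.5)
The plan is to exploit that $\mathcal{R}T^0(\mathcal{T}_h)$ is finite-dimensional and argue directly, in the same spirit as the proof of \Cref{thm:cts-dual-exist}. As there, I would pass to the equivalent minimization of
\[
\tilde D_h(\bm v_h):=\tfrac12\|\bm K_h^{-\frac12}\Pi_h\bm v_h\|_{2,\Omega}^2+\langle p_D^h,\bm v_h\cdot\bm n\rangle_{\Gamma_D}
\]
over $K_h^\star$. Here the boundary term is an honest $L^2(\Gamma_D)$ integral of a piecewise affine function against a facetwise constant one, so it is a linear, hence continuous, functional on $\mathcal{R}T^0(\mathcal{T}_h)$. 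In addition, $\bm K_h=\Pi_h\bm K$ inherits the bounds $k_0,k_1$ of \Cref{assumption:data-assumption}(ii), since it is an elementwise average of symmetric matrices satisfying them. By \Cref{prop:disc-K-weakly-closed}, $K_h^\star$ is non-empty, convex and closed under either hypothesis (i) or (ii). It therefore suffices to show that $\tilde D_h$ is coercive and strictly convex on $K_h^\star$; existence then follows by Weierstrass on a closed bounded sublevel set, and uniqueness by strict convexity.

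The key structural observation concerns $\Pi_h$, which in general is not injective on $\mathcal{R}T^0(\mathcal{T}_h)$ as a map to the piecewise constants. Writing $\bm v_h|_T=\bm a_T+b_T\bm x$, we have $\textup{div}\,\bm v_h|_T=d\,b_T$ and $\Pi_h\bm v_h|_T=\bm a_T+b_T\bm x_T$. On the affine set $K_h^\star$ the divergence is frozen, $\textup{div}\,\bm v_h=f_h$, so $b_T=f_h|_T/d$ is fixed. Consequently:
\begin{itemize}[noitemsep,topsep=2pt]
\item The map $\bm v_h\mapsto\Pi_h\bm v_h$ is injective on differences of elements of $K_h^\star$. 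Indeed, if $\bm w_h=\bm v_h-\bm v_h'$, then $\textup{div}\,\bm w_h=0$ forces $b_T=0$, and $\Pi_h\bm w_h=0$ then forces $\bm a_T=0$.
\item There is a constant $c_h$ such that $\|\bm v_h\|_{L^2(\Omega)}\le c_h\bigl(\|\Pi_h\bm v_h\|_{L^2(\Omega)}+\|f_h\|_{L^2(\Omega)}\bigr)$ for all $\bm v_h\in K_h^\star$.
\end{itemize}

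From the first point, strict convexity of $\bm v_h\mapsto\frac12\|\bm K_h^{-1/2}\Pi_h\bm v_h\|^2$ along segments in $K_h^\star$ follows, because the quadratic form is positive definite on the direction space $\{\bm w_h:\textup{div}\,\bm w_h=0\}$. From the second point, together with norm equivalence in finite dimensions, $\tilde D_h(\bm v_h)\ge c\|\bm v_h\|^2-C\|\bm v_h\|-C$ on $K_h^\star$, which gives coercivity. The linear Dirichlet term is simply absorbed via Young's inequality; unlike the continuous case, no lifting or trace bound is needed. A minimizing sequence is therefore bounded, a subsequence converges in the finite-dimensional space, its limit lies in $K_h^\star$ by closedness, and continuity of $\tilde D_h$ shows the limit is a minimizer.

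The main obstacle I expect is precisely the non-injectivity of $\Pi_h$ on all of $\mathcal{R}T^0(\mathcal{T}_h)$. Without the frozen-divergence argument above, $\tilde D_h$ would be neither strictly convex nor coercive, because of directions with $b_T\ne0$ and $\bm a_T=-b_T\bm x_T$. The step to present carefully is therefore the explicit elementwise decomposition of $\bm v_h$ and the fact that $\textup{div}\,\bm v_h=f_h$ pins down $b_T$; everything else is routine finite-dimensional convex analysis.
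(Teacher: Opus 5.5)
Your proposal is correct and follows the same route as the paper, whose proof is omitted as ``analogous'' to the direct-method argument for \Cref{thm:cts-dual-exist}: a non-empty closed convex admissible set, coercivity, and strict convexity. Your observation that $\textup{div}\,\bm v_h=f_h$ freezes the coefficients $b_T$ supplies the discrete-specific detail that this ``analogous'' argument needs and leaves implicit. It makes $\Pi_h$ injective on divergence-free directions and gives control of $\|\bm v_h\|_{2,\Omega}$ by $\|\Pi_h\bm v_h\|_{2,\Omega}+\|f_h\|_{2,\Omega}$ on $K_h^\star$.
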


\begin{proof}
The proof is analogous to that of \Cref{thm:cts-dual-exist}, and is therefore omitted.
\end{proof}

\subsection{Discrete Fenchel (pre)dual problem}
\label{ssec:disc-predual}

We next introduce a discrete (pre)dual problem in the sense of
Fenchel--Rockafellar corresponding to the optimization
problem~\eqref{eq:inner_max_unconstrained}. For given discrete data $\alpha_h, \beta_h, f_h, \bm K_h$, and $p_D^h$ defined as in the previous subsection, we define a functional
$I_h^{cr} : \mathcal{S}^{1,\mathrm{cr}}(\mathcal{T}_h) \to \mathbb{R}\cup\{+\infty\}$
by
\begin{align} \label{eq:discrete_predual}
    I_h^{cr}(q_h) := \tfrac{1}{2} \|\bm K_h^{\frac12} \nabla_h q_h\|_{2,\Omega}^2
           - (f_h,\Pi_h q_h)_\Omega
          + \langle \beta_h, (\pi_h q_h)^+\rangle_{\Gamma_C}
           - \langle \alpha_h, (\pi_h q_h)^-\rangle_{\Gamma_C}
           + I_{K_h}(q_h).
\end{align}
 Here, we have defined
the indicator functional
\begin{align*}
    I_{K_h}(q_h) &:= \begin{cases}
        0, \quad q_h \in K_h, \\
        +\infty, \quad \text{else},
    \end{cases}
\end{align*}
with the discrete primal admissible set defined as
\begin{align} \label{eq:disc-primal-admissible}
K_h
   := \{q_h\in\mathcal{S}^{1,\mathrm{cr}}(\mathcal{T}_h) :
     q_h(x_S)=p_D^h(x_S)\ \forall S\in\mathcal{S}_h^D\}.
\end{align}
We seek $p_h^{cr} \in \mathcal{S}^{1,\mathrm{cr}}(\mathcal{T}_h)$ solving the
optimization problem
\begin{align} \label{eq:disc-primal-prob}
I_h^{cr}(p_h^{cr}) = \inf_{ q_h \in \mathcal{S}^{1,\mathrm{cr}}(\mathcal{T}_h)} I_h^{cr}(q_h),
\end{align}
which we will henceforth refer to as the \emph{discrete primal problem}.
The following result shows that the discrete dual
problem~\eqref{eq:inner_max_unconstrained} is indeed the Fenchel dual of
the discrete primal problem~\eqref{eq:disc-primal-prob}.

\begin{theorem}[Discrete Fenchel predual and strong duality]
\label{thm:disc-strong-duality}
Suppose \Cref{assumption:data-assumption} holds, assume $K_h^\star\ne\emptyset$, and let
$\bm u_h^{rt}\in K_h^\star$ be the unique maximizer
of~\eqref{eq:inner_max_unconstrained}. Then the following statements hold.
\begin{itemize}[noitemsep,topsep=2pt,leftmargin=!,labelwidth=\widthof{(ii)}]
   \item[(i)] The Fenchel {\rm(pre)}dual problem to the maximization
   of~\eqref{eq:discrete_dual_functional} is the minimization
   of~\eqref{eq:discrete_predual}. 
   \item[(ii)] There exists a minimizer $p_h^{cr}\in K_h$
   of~\eqref{eq:discrete_predual}, unique if $|\Gamma_D|>0$, and there holds the
   discrete strong-duality relation
   \begin{align}\label{eq:disc_strong_duality}
      I_h^{\mathrm{cr}}(p_h^{cr})=D_h^{\mathrm{rt}}(\bm u_h^{rt}).
   \end{align}
   \item[(iii)] The convex optimality conditions hold:
   \begin{subequations}\label{eq:disc_optimality}
   \begin{align}
      \Pi_h\bm u_h^{rt} &= -\bm K_h\nabla_h p_h^{cr}, \label{eq:disc_optimality_1}\\
      \langle\bm u_h^{rt}\cdot\bm n,\pi_h p_h^{cr}\rangle_{\Gamma_C}
      &= \langle\beta_h,(\pi_h p_h^{cr})^+\rangle_{\Gamma_C}
      -\langle\alpha_h,(\pi_h p_h^{cr})^-\rangle_{\Gamma_C}. \label{eq:disc_optimality_2}
   \end{align}
   \end{subequations}
\end{itemize}
\end{theorem}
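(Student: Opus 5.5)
We mirror the proof of \Cref{lem:strong-duality}, working in finite dimensions, where the functional analytic subtleties disappear. Write $I_h^{cr}(q_h)=G_h(\nabla_h q_h)+F_h(q_h)$ with
\begin{align*}
G_h(\bar{\bm v}_h):=\tfrac12\|\bm K_h^{\frac12}\bar{\bm v}_h\|_{2,\Omega}^2, \qquad
F_h(q_h):=-(f_h,\Pi_h q_h)_\Omega+\langle\beta_h,(\pi_hq_h)^+\rangle_{\Gamma_C}-\langle\alpha_h,(\pi_hq_h)^-\rangle_{\Gamma_C}+I_{K_h}(q_h).
\end{align*}
Here $G_h$ is defined on $(\mathcal L^0_h(\mathcal T_h))^d$ and $F_h$ on $\mathcal S^{1,\mathrm{cr}}(\mathcal T_h)$. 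Since $\bm K_h$ is elementwise constant and elliptic, $G_h^\star(-\bar{\bm v}_h)=\tfrac12\|\bm K_h^{-\frac12}\bar{\bm v}_h\|^2_{2,\Omega}$.

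\textbf{Computing $F_h^\star(\nabla_h^\star\bar{\bm v}_h)$.} For $\bar{\bm v}_h\in(\mathcal L^0_h(\mathcal T_h))^d$ we must evaluate
\[
\sup_{q_h\in K_h}\cbr[1]{(\bar{\bm v}_h,\nabla_hq_h)_\Omega+(f_h,\Pi_hq_h)_\Omega-\langle\beta_h,(\pi_hq_h)^+\rangle_{\Gamma_C}+\langle\alpha_h,(\pi_hq_h)^-\rangle_{\Gamma_C}}.
\]
Write $q_h=p_D^h+r_h$ with $r_h\in\mathcal S^{1,\mathrm{cr}}_D(\mathcal T_h)$. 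First restrict to $r_h\in\mathcal S^{1,\mathrm{cr}}_0(\mathcal T_h)$, which leaves the boundary terms unchanged. Linearity in $r_h$ then forces the compatibility condition \eqref{eq:compat-lift}; otherwise the supremum is $+\infty$. Under this condition, \Cref{lem:disc-lifting} gives $\bm v_h\in\mathcal{RT}^0(\mathcal T_h)$ with $\Pi_h\bm v_h=\bar{\bm v}_h$ and $\mathrm{div}\,\bm v_h=f_h$.

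The discrete IBP \eqref{eq:discrete-IBP} then turns the linear part into $\langle\bm v_h\cdot\bm n,\pi_h q_h\rangle_{\partial\Omega}$. On $\Gamma_D$ this contributes $\langle p_D^h,\bm v_h\cdot\bm n\rangle_{\Gamma_D}$, since $\pi_h q_h=\pi_hp_D^h$ there. On $\Gamma_C$ the values $\xi_S:=\pi_h q_h|_S$, $S\in\mathcal S_h^C$, range freely over $\mathbb R$ because the CR degrees of freedom on $\Gamma_C$ are independent. The pointwise computation from the continuous proof, now performed facetwise on $\sup_{\xi\in\mathbb R}\{(\bm v_h\cdot\bm n|_S)\xi-\beta_h\xi^++\alpha_h\xi^-\}$, gives the indicator of $\alpha_h\le\bm v_h\cdot\bm n\le\beta_h$.

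The dual is a priori posed over $\bar{\bm v}_h$, not over Raviart--Thomas fields. This is the expected main obstacle: lifting the piecewise constant $\bar{\bm v}_h$ to $\bm v_h$ must be shown consistent, i.e.\ the resulting value must not depend on the choice of lift. It is resolved as follows. Two lifts differ by a divergence-free $\mathcal{RT}^0$ field with vanishing $\Pi_h$. Because $\Pi_h$ determines all normal components of an $\mathcal{RT}^0$ field with fixed elementwise divergence, such a difference is zero, and the lift is unique. Hence
\[
-G_h^\star(-\Pi_h\bm v_h)-F_h^\star(\nabla_h^\star\Pi_h\bm v_h)=D_h^{rt}(\bm v_h),
\]
which proves (i).

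\textbf{Strong duality (ii).} Everything is finite dimensional and $G_h$ is continuous everywhere, so the Fenchel--Rockafellar theorem applies as in \eqref{eq:FR} once $\mathrm{dom}F_h\neq\emptyset$, which holds because $p_D^h\in K_h$. It yields $\inf I_h^{cr}=\sup D_h^{rt}$.

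Attainment of the primal infimum needs separate care. The value $\inf I_h^{cr}$ is finite by weak duality, since $K_h^\star\neq\emptyset$ and $\sup D_h^{rt}<\infty$. The functional $I_h^{cr}$ is a convex piecewise quadratic function on a finite-dimensional affine space and is bounded below, so it attains its minimum. One can argue directly via recession directions, which are constants when $\Gamma_D=\emptyset$; on constants $I_h^{cr}$ is bounded below exactly by the discrete compatibility condition \eqref{eq:disc-compatibility}.

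Uniqueness for $|\Gamma_D|>0$ follows because $\|\nabla_h\cdot\|$ is a norm on $\mathcal S^{1,\mathrm{cr}}_D(\mathcal T_h)$, which makes $I_h^{cr}$ strictly convex on $K_h$.

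\textbf{Optimality conditions (iii).} The Fenchel relations give $-\Pi_h\bm u_h^{rt}\in\partial G_h(\nabla_hp_h^{cr})$, which is \eqref{eq:disc_optimality_1}, together with $\nabla_h^\star\Pi_h\bm u_h^{rt}\in\partial F_h(p_h^{cr})$. Applying \eqref{eq:discrete-IBP} to the latter yields a facetwise subdifferential inclusion for $\xi\mapsto\beta_h\xi^+-\alpha_h\xi^-$ at $\pi_hp_h^{cr}$. The equality case of Fenchel--Young then gives \eqref{eq:disc_optimality_2}.
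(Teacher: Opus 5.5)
Your proposal is correct and follows the paper's proof. It uses the same splitting $I_h^{cr}=G_h\circ\nabla_h+F_h$, computes $F_h^\star(\nabla_h^\star\bar{\bm v}_h)$ via \Cref{lem:disc-lifting} and \eqref{eq:discrete-IBP} with facetwise scalar suprema, applies Fenchel--Rockafellar for (ii), and uses the Fenchel extremality relations with the Fenchel--Young equality case for (iii). Your two additions are sound refinements, not a different route: deriving \eqref{eq:compat-lift} by testing with $\mathcal S^{1,\mathrm{cr}}_0(\mathcal T_h)$ and proving uniqueness of the lift with divergence $f_h$ makes the formula for $F_h^\star$ well defined, which the paper leaves implicit; and your separate argument for attaining the primal infimum via its convex piecewise linear--quadratic structure is needed, since the paper attributes this to Fenchel--Rockafellar, which by itself only guarantees attainment on the dual side.
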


\begin{proof}
(i) Define $G_h:(\mathcal L_h^0(\mathcal T_h))^d\to\mathbb R$ and
$F_h:\mathcal S^{1,\mathrm{cr}}(\mathcal T_h)\to\mathbb R\cup\{+\infty\}$ by
\begin{align*}
    G_h(\bm z_h) &:= \tfrac12\|\bm K_h^{\frac12}\bm z_h\|_{2,\Omega}^2, \\
    F_h(q_h) &:= -(f_h,\Pi_h q_h)_\Omega
       +\langle\beta_h,(\pi_h q_h)^+\rangle_{\Gamma_C}
       -\langle\alpha_h,(\pi_h q_h)^-\rangle_{\Gamma_C}
       + I_{K_h}(q_h).
\end{align*}
Then $I_h^{\mathrm{cr}}(q_h)=G_h(\nabla_h q_h)+F_h(q_h)$ for every
$q_h\in\mathcal S^{1,\mathrm{cr}}(\mathcal T_h)$. We show that
\begin{align}\label{eq:disc-dual-sumof-conjugate}
   D_h^{\mathrm{rt}}(\bm v_h)
   =-G_h^*(-\Pi_h\bm v_h)-F_h^*(\nabla_h^*\Pi_h\bm v_h).
\end{align}
By \cite[Prop.~13.19 and Prop.~13.23(iv)]{bauschke2017convex}, it holds that
\begin{align}\label{eq:disc-G-conjugate}
    G_h^*(-\bm z_h)=\tfrac12\|\bm K_h^{-\frac12}\bm z_h\|_{2,\Omega}^2
    \qquad\forall\,\bm z_h\in(\mathcal L_h^0(\mathcal T_h))^d.
\end{align}
For every $\bm z_h\in(\mathcal L_h^0(\mathcal T_h))^d$, using
\Cref{lem:disc-lifting} and the discrete integration-by-parts formula
\eqref{eq:discrete-IBP}, we find that
\begin{align*}
\begin{aligned}
F_h^*(\nabla_h^*\bm z_h)
&=
\sup_{q_h\in\mathcal S^{1,\mathrm{cr}}(\mathcal T_h)}
\cbr[1]{
(\bm z_h,\nabla_h q_h)_\Omega
+(f_h,\Pi_h q_h)_\Omega
-\langle\beta_h,(\pi_h q_h)^+\rangle_{\Gamma_C}
+\langle\alpha_h,(\pi_h q_h)^-\rangle_{\Gamma_C}
-I_{K_h}(q_h)
}
\\
&=
\sup_{\widehat q_h\in\mathcal S^{1,\mathrm{cr}}_{D}(\mathcal T_h)}
\cbr[1]{
(\bm z_h,\nabla_h(\widehat q_h+p_D^h))_\Omega
+(f_h,\Pi_h(\widehat q_h+p_D^h))_\Omega
}
\\
&\qquad
-\langle\beta_h,(\pi_h(\widehat q_h+p_D^h))^+\rangle_{\Gamma_C}
+\langle\alpha_h,(\pi_h(\widehat q_h+p_D^h))^-\rangle_{\Gamma_C}
\\
&=
\begin{cases}
\begin{aligned}
&\langle p_D^h,\bm v_h\cdot\bm n\rangle_{\Gamma_D}
+I_{\{f_h\}}^\Omega(\operatorname{div}\bm v_h)
\\
&\quad
+\displaystyle\sup_{\bar q_h\in\pi_h(K_h|_{\Gamma_C})}
\cbr[1]{
\langle \bm v_h\cdot\bm n,\bar q_h\rangle_{\Gamma_C}
-\langle\beta_h,\bar q_h^{\, +}\rangle_{\Gamma_C}
+\langle\alpha_h,\bar q_h^{\, -}\rangle_{\Gamma_C}}
\end{aligned}
&
\begin{aligned}
&\text{if }\bm z_h=\Pi_h\bm v_h,\\
&\text{where }\bm v_h\in\mathcal{R}T^0(\mathcal T_h),
\end{aligned}
\\[2ex]
+\infty,
& \text{else}.
\end{cases}
\end{aligned}
\end{align*}
Since $\pi_h(K_h|_{\Gamma_C})=\mathcal L_h^0(\mathcal S_h^C)$, every
$\bar q_h\in\pi_h(K_h|_{\Gamma_C})$ is determined by independent constants
on each facet. Hence the remaining supremum is the sum of scalar
suprema over the facets. On a fixed facet $S\in\mathcal S_h^C$, write
$\bar q_h|_S=\xi$. Then
\begin{align*}
&\sup_{\xi\in\mathbb R}
\cbr[1]{
(\bm v_h\cdot\bm n)|_S\xi
-\beta_h|_S\xi^+
+\alpha_h|_S\xi^-}
\\
&\quad=
\max\cbr[1]{
\sup_{\xi\ge0}\del{(\bm v_h\cdot\bm n)|_S-\beta_h|_S}\xi,
\sup_{\xi\le0}\del{(\bm v_h\cdot\bm n)|_S-\alpha_h|_S}\xi}
\\
&\quad=
\begin{cases}
0,
& \alpha_h\le \bm v_h\cdot\bm n\le\beta_h\quad\text{on }S,\\
+\infty,
& \text{otherwise}.
\end{cases}
\end{align*}
Indeed, the first supremum is finite exactly when
$\bm v_h\cdot\bm n\le\beta_h$ on $S$, and the second is finite exactly when
$\bm v_h\cdot\bm n\ge\alpha_h$ on $S$. Therefore,
\begin{align*}
&\sup_{\bar q_h\in\pi_hK_h|_{\Gamma_C}}
\cbr[1]{
\langle \bm v_h\cdot\bm n,\bar q_h\rangle_{\Gamma_C}
-\langle\beta_h,\bar q_h^+\rangle_{\Gamma_C}
+\langle\alpha_h,\bar q_h^-\rangle_{\Gamma_C}}
= I_{[\alpha_h,\beta_h]}^{\Gamma_C}(\bm v_h\cdot\bm n).
\end{align*}
Thus, using the fact that
\begin{align*}
I_{K_h^\star}(\bm v_h)
=I_{\{f_h\}}^\Omega(\operatorname{div}\bm v_h)
+I_{[\alpha_h,\beta_h]}^{\Gamma_C}(\bm v_h\cdot\bm n),
\end{align*}
we obtain
\begin{align}\label{eq:disc-Fstar-final}
F_h^*(\nabla_h^*\bm z_h)
=
\begin{cases}
\begin{aligned}
&I_{K_h^\star}(\bm v_h)
+\langle p_D^h,\bm v_h\cdot\bm n\rangle_{\Gamma_D},
\end{aligned}
&
\begin{aligned}
&\text{if }\bm z_h=\Pi_h\bm v_h,\\
&\text{with }\bm v_h\in\mathcal{R}T^0(\mathcal T_h),
\end{aligned}
\\[2ex]
+\infty,
& \text{otherwise}.
\end{cases}
\end{align}
Combining \eqref{eq:disc-G-conjugate}, \eqref{eq:disc-Fstar-final} and
\eqref{eq:disc-dual-sumof-conjugate} yields \eqref{eq:discrete_dual_functional}.

\medskip
(ii) Both $G_h$ and $F_h$ are proper, convex, and lower semicontinuous;
$G_h$ is moreover continuous on
$(\mathcal L_h^0(\mathcal T_h))^d$. Thus, the Fenchel--Rockafellar theorem
applies and yields
\begin{align}\label{eq:disc-FR}
    \inf_{q_h\in\mathcal S^{1,\mathrm{cr}}(\mathcal T_h)} I_h^{\mathrm{cr}}(q_h)
    =
    \sup_{\bm z_h\in(\mathcal L_h^0(\mathcal T_h))^d}
    \cbr{-G_h^*(-\bm z_h)-F_h^*(\nabla_h^*\bm z_h)},
\end{align}
together with the existence of a maximizer of the right-hand side and at least
one minimizer $p_h^{cr}\in\mathcal S^{1,\mathrm{cr}}(\mathcal T_h)$ of the left-hand
side. Since the supremum in \eqref{eq:disc-FR} may be restricted to
$\Pi_h(\mathcal{R}T^0(\mathcal T_h))$ and satisfies
$-G_h^*(-\Pi_h\bm v_h)-F_h^*(\nabla_h^*\Pi_h\bm v_h)=D_h^{\mathrm{rt}}(\bm v_h)$,
the right-hand side is precisely the maximization of $D_h^{\mathrm{rt}}$ over
$K_h^\star$, attained at $\bm u_h^{rt}$. Hence there exists a minimizer $p_h^{cr}\in K_h$
and the discrete strong-duality identity \eqref{eq:disc_strong_duality} holds.
If $|\Gamma_D|>0$, the discrete Poincar\'e inequality on
$\mathcal S^{1,\mathrm{cr}}_{D}(\mathcal T_h)$ together with the ellipticity of
$\bm K_h$ shows that
$q_h\mapsto\tfrac12\|\bm K_h^{1/2}\nabla_h q_h\|_{2,\Omega}^2$ is strictly
convex on $K_h$. The remaining terms in $I_h^{\mathrm{cr}}$ are convex. Hence
$I_h^{\mathrm{cr}}$ is strictly convex on $K_h$ and the minimizer is unique.

\medskip
(iii) By the standard Fenchel optimality relations, equality in the strong
duality relation implies
\begin{align*}
    -\Pi_h\bm u_h^{rt}\in\partial G_h(\nabla_h p_h^{cr}),
    \qquad
    \nabla_h^*\Pi_h\bm u_h^{rt}\in\partial F_h(p_h^{cr}).
\end{align*}
The first inclusion gives \eqref{eq:disc_optimality_1}. For the second inclusion,
using the definition of the subdifferential and rearranging gives, for every
$q_h\in K_h$,
\begin{align*}
&-(f_h,\Pi_h(q_h-p_h^{cr}))_\Omega
+\langle\beta_h,(\pi_h q_h)^+\rangle_{\Gamma_C}
-\langle\alpha_h,(\pi_h q_h)^-\rangle_{\Gamma_C}
-\langle\beta_h,(\pi_h p_h^{cr})^+\rangle_{\Gamma_C}
+\langle\alpha_h,(\pi_h p_h^{cr})^-\rangle_{\Gamma_C}
\\
&\qquad\ge
(\Pi_h\bm u_h^{rt},\nabla_h(q_h-p_h^{cr}))_\Omega .
\end{align*}
Since $(p_h^{cr},\bm u_h^{rt})\in K_h\times K_h^\star$, the discrete integration-by-parts
identity gives
\begin{align*}
\langle\beta_h,(\pi_h q_h)^+\rangle_{\Gamma_C}
-\langle\alpha_h,(\pi_h q_h)^-\rangle_{\Gamma_C}
-\langle\beta_h,(\pi_h p_h^{cr})^+\rangle_{\Gamma_C}
+\langle\alpha_h,(\pi_h p_h^{cr})^-\rangle_{\Gamma_C}
\ge
\langle\bm u_h^{rt}\cdot\bm n,\pi_h(q_h-p_h^{cr})\rangle_{\Gamma_C}.
\end{align*}
Equivalently,
\begin{align*}
\bm u_h^{rt}\cdot\bm n
\in
\partial\Bigl(
 r_h\mapsto
\langle\beta_h,r_h^+\rangle_{\Gamma_C}
-\langle\alpha_h,r_h^-\rangle_{\Gamma_C}
\Bigr)(\pi_h p_h^{cr})
\quad\text{on }\mathcal L_h^0(\mathcal S_h^C).
\end{align*}
The equality condition in the Fenchel--Young inequality yields
\eqref{eq:disc_optimality_2}. The proof is complete.
\end{proof}

\begin{remark}[Discrete complementarity conditions]\label{rem:disc_complementarity}
Decomposing $\pi_h p_h^{cr}=(\pi_h p_h^{cr})^+-(\pi_h p_h^{cr})^-$ on $\Gamma_C$, the
boundary optimality condition~\eqref{eq:disc_optimality_2} is equivalent to
\begin{align}\label{eq:disc-complementarity-identity}
    \langle\beta_h-\bm u_h^{rt}\cdot\bm n,\,(\pi_h p_h^{cr})^+\rangle_{\Gamma_C}
    +\langle\bm u_h^{rt}\cdot\bm n-\alpha_h,\,(\pi_h p_h^{cr})^-\rangle_{\Gamma_C}=0.
\end{align}
Since $\bm u_h^{rt}\in K_h^\star$, we have
$\alpha_h\le \bm u_h^{rt}\cdot\bm n\le\beta_h$ on every facet
$S\in\mathcal S_h^C$, so both terms in
\eqref{eq:disc-complementarity-identity} are non-negative. Since they sum to
zero, each vanishes. Since the factors are facetwise constant on
$\mathcal S_h^C$, this gives the facetwise complementarity relations
\begin{align}\label{eq:disc-complementarity-facetwise}
    (\beta_h-\bm u_h^{rt}\cdot\bm n)\,(\pi_h p_h^{cr})^+ = 0
    \quad\text{and}\quad
    (\bm u_h^{rt}\cdot\bm n-\alpha_h)\,(\pi_h p_h^{cr})^- = 0
    \qquad\text{on every } S\in\mathcal S_h^C.
\end{align}
Equivalently, for every $S\in\mathcal S_h^C$,
\begin{align*}
    (\pi_h p_h^{cr})|_S>0
    &\;\Longrightarrow\; \bm u_h^{rt}\cdot\bm n|_S=\beta_h|_S
        &&\text{(upper bound active)},\\
    (\pi_h p_h^{cr})|_S<0
    &\;\Longrightarrow\; \bm u_h^{rt}\cdot\bm n|_S=\alpha_h|_S
        &&\text{(lower bound active)},\\
    \alpha_h|_S<\bm u_h^{rt}\cdot\bm n|_S<\beta_h|_S
    &\;\Longrightarrow\; (\pi_h p_h^{cr})|_S=0
        &&\text{(bounds inactive)}.
\end{align*}
\end{remark}

\section{A priori error analysis }\label{ssec:disc-apriori}

In this section, resorting to the discrete convex duality relations established in \Cref{sec:discrete-flux-constrained}, we derive a discrete error identity (\Cref{thm:disc-aposteriori-id}) that applies to arbitrary admissible approximations of the discrete primal and discrete dual problem at the same time. From this identity, evaluated at quasi-interpolants of the exact solution, we derive a priori error estimates with error decay rates given only fractional regularity assumptions on the solution and the flux bounds (\Cref{thm:disc-apriori}).

\subsection{Discrete primal-dual gap estimator}
We now derive an exact discrete primal--dual gap error identity.
Throughout, $K_h$ and $K_h^\star$ denote the discrete admissible primal and
dual sets introduced in~\eqref{eq:disc-primal-admissible}
and~\eqref{eq:disc-Kh}, respectively, and $(p_h^{cr},\bm u_h^{rt})\in K_h\times
K_h^\star$ is the discrete primal-dual solution pair from
Theorem~\ref{thm:disc-strong-duality}. Define the \emph{discrete
primal-dual gap estimator}
$\eta_{\mathrm{gap},h}^2 \colon K_h\times K_h^\star \to [0,+\infty)$, for
every $q_h \in K_h$ and $\bm v_h \in K_h^\star$, via
\begin{align} \label{eq:disc-gap}
\eta_{\mathrm{gap},h}^2(q_h,\bm v_h) := I_h^{cr}(q_h) - D^{rt}(\bm v_h).
\end{align}
The discrete primal-dual gap estimator serves as a \emph{distance measure}
between a given admissible discrete primal-dual pair $(q_h,\bm v_h) \in
K_h\times K_h^\star$ and the discrete solution $(p_h^{cr},\bm u_h^{rt}) \in K_h\times
K_h^\star$. The following lemma shows that the discrete primal-dual gap
estimator decomposes into a contribution measuring the violation of the
optimality condition~\eqref{eq:disc_optimality_1} and a contribution
measuring the violation of the optimality
condition~\eqref{eq:disc_optimality_2}.

\begin{lemma}[Decomposition of the discrete gap estimator]\label{lem:disc-gap-decomp}
For every $q_h \in K_h$ and every $\bm v_h \in K_h^\star$,
\begin{align*}
    \eta_{\mathrm{gap},h}^2(q_h,\bm v_h) = \eta_{\mathrm{gap},h,\mathrm{I}}^2(q_h,\bm v_h) + \eta_{\mathrm{gap},h,\mathrm{II}}^2(q_h,\bm v_h),
\end{align*}
where
\begin{align*}
    \eta_{\mathrm{gap},h,\mathrm{I}}^2(q_h,\bm v_h) & := \tfrac{1}{2}\|\bm K_h^{\frac{1}{2}} \nabla_h q_h
                                   + \bm K_h^{-\frac{1}{2}} \Pi_h\bm v_h\|_{2,\Omega}^2,\\
   \eta_{\mathrm{gap},h,\mathrm{II}}^2(q_h,\bm v_h)
    & := -  \langle\bm v_h\cdot \bm n, \pi_h q_h\rangle_{\Gamma_C}
    + \langle \beta_h, (\pi_h q_h)^+\rangle_{\Gamma_C}
    - \langle \alpha_h, (\pi_h q_h)^-\rangle_{\Gamma_C}.
\end{align*}
\end{lemma}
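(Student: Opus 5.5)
The plan is to mirror the proof of \Cref{lem:gap-decomp}, replacing the continuous integration by parts with the discrete identity \eqref{eq:discrete-IBP}. We start by expanding $\eta_{\mathrm{gap},h}^2(q_h,\bm v_h)=I_h^{cr}(q_h)-D_h^{rt}(\bm v_h)$ for $q_h\in K_h$ and $\bm v_h\in K_h^\star$. Both indicator functionals vanish, so
\begin{align*}
\eta_{\mathrm{gap},h}^2(q_h,\bm v_h)
&=\tfrac12\|\bm K_h^{\frac12}\nabla_h q_h\|_{2,\Omega}^2-(f_h,\Pi_h q_h)_\Omega
+\langle\beta_h,(\pi_h q_h)^+\rangle_{\Gamma_C}-\langle\alpha_h,(\pi_h q_h)^-\rangle_{\Gamma_C}\\
&\quad+\tfrac12\|\bm K_h^{-\frac12}\Pi_h\bm v_h\|_{2,\Omega}^2+\langle p_D^h,\bm v_h\cdot\bm n\rangle_{\Gamma_D}.
\end{align*}

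Next we use $\mathrm{div}\,\bm v_h=f_h$ to write $(f_h,\Pi_h q_h)_\Omega=(\Pi_h q_h,\mathrm{div}\,\bm v_h)_\Omega$. Applying \eqref{eq:discrete-IBP} to this term gives
$$-(f_h,\Pi_h q_h)_\Omega=(\nabla_h q_h,\Pi_h\bm v_h)_\Omega-(\pi_h q_h,\bm v_h\cdot\bm n)_{\Gamma_D}-(\pi_h q_h,\bm v_h\cdot\bm n)_{\Gamma_C}.$$
Since $\bm v_h\cdot\bm n$ is facetwise constant, the $\Gamma_D$ pairing only involves the facet means of $q_h$. These satisfy $\pi_h q_h=q_h(x_S)=p_D^h(x_S)=\pi_h p_D^h$ on every $S\in\mathcal S_h^D$ by the definition of $K_h$, using that the mean of an affine function on a facet equals its barycenter value. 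Hence $(\pi_h q_h,\bm v_h\cdot\bm n)_{\Gamma_D}=\langle p_D^h,\bm v_h\cdot\bm n\rangle_{\Gamma_D}$, and this cancels the last term of the expansion.

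What remains is
\begin{align*}
\eta_{\mathrm{gap},h}^2(q_h,\bm v_h)
&=\tfrac12\|\bm K_h^{\frac12}\nabla_h q_h\|_{2,\Omega}^2+(\nabla_h q_h,\Pi_h\bm v_h)_\Omega+\tfrac12\|\bm K_h^{-\frac12}\Pi_h\bm v_h\|_{2,\Omega}^2\\
&\quad-\langle\bm v_h\cdot\bm n,\pi_h q_h\rangle_{\Gamma_C}+\langle\beta_h,(\pi_h q_h)^+\rangle_{\Gamma_C}-\langle\alpha_h,(\pi_h q_h)^-\rangle_{\Gamma_C}.
\end{align*}
Because $\bm K_h$ is elementwise constant, symmetric and positive definite, we have $(\nabla_h q_h,\Pi_h\bm v_h)_\Omega=(\bm K_h^{\frac12}\nabla_h q_h,\bm K_h^{-\frac12}\Pi_h\bm v_h)_\Omega$. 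Completing the square in the first line then yields $\eta_{\mathrm{gap},h,\mathrm{I}}^2(q_h,\bm v_h)$, and the second line is exactly $\eta_{\mathrm{gap},h,\mathrm{II}}^2(q_h,\bm v_h)$.

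The only step needing care is the interpretation of the Dirichlet pairing $\langle p_D^h,\bm v_h\cdot\bm n\rangle_{\Gamma_D}$ in $D_h^{rt}$. Since $\bm v_h\cdot\bm n\in L^2(\Gamma_D)$, \Cref{rem:pD-classical} identifies this pairing with the $L^2(\Gamma_D)$ inner product. That inner product equals $\langle\pi_h p_D^h,\bm v_h\cdot\bm n\rangle_{\Gamma_D}$ because $\bm v_h\cdot\bm n$ is piecewise constant, which is what the cancellation above uses. Everything else is algebra.
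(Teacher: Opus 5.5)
Your proposal is correct and follows the paper's proof essentially step for step: expand the gap, use $\operatorname{div}\bm v_h=f_h$ together with the discrete integration-by-parts identity \eqref{eq:discrete-IBP}, cancel the $\Gamma_D$ contributions via $\pi_h q_h=\pi_h p_D^h$ on $\mathcal S_h^D$, and complete the square. One small quibble: \Cref{rem:pD-classical} concerns $H^{1/2}$ Dirichlet data and is not really needed here. Since $p_D^h$ is a Crouzeix--Raviart function, the pairing in $D_h^{rt}$ is simply the $L^2(\Gamma_D)$ inner product of two facetwise polynomials.
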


\begin{proof}
Note that for all $q_h \in K_h$ and $\bm v_h \in K_h^\star$,
\begin{align*}
    \eta_{\mathrm{gap},h}^2(q_h,\bm v_h)
    &= \tfrac{1}{2} \|\bm K_h^{\frac12} \nabla_h q_h\|_{2,\Omega}^2
       - (f_h,\Pi_h q_h)_\Omega
       + \langle \beta_h, (\pi_h q_h)^+\rangle_{\Gamma_C}
       - \langle \alpha_h, (\pi_h q_h)^-\rangle_{\Gamma_C} \\
      &\quad +\tfrac{1}{2} \|\bm K_h^{-\frac{1}{2}}\Pi_h\bm v_h\|_{2,\Omega}^2
       +\langle p_D^h,\bm v_h\cdot \bm n\rangle_{\Gamma_D} \\
      &= \tfrac{1}{2} \|\bm K_h^{\frac12} \nabla_h q_h\|_{2,\Omega}^2
       +(\Pi_h\bm v_h ,\nabla_h q_h)_\Omega
       - \langle\bm v_h\cdot\bm n,\pi_h q_h\rangle_{\partial\Omega}
       + \langle \beta_h, (\pi_h q_h)^+\rangle_{\Gamma_C} \\
      &\quad - \langle \alpha_h, (\pi_h q_h)^-\rangle_{\Gamma_C}
       +\tfrac{1}{2} \|\bm K_h^{-\frac{1}{2}}\Pi_h\bm v_h\|_{2,\Omega}^2
       +\langle p_D^h,\bm v_h\cdot \bm n\rangle_{\Gamma_D},
\end{align*}
where we have used $\textup{div}\,\bm v_h = f_h$ and the discrete
integration-by-parts identity~\eqref{eq:discrete-IBP}. Using the fact that 
$\pi_h q_h = \pi_h p_D^h$ on $\Gamma_D$ and
rearranging yields
\begin{align*}
    \eta_{\mathrm{gap},h}^2(q_h,\bm v_h)
    &= \tfrac{1}{2} \|\bm K_h^{\frac12} \nabla_h q_h\|_{2,\Omega}^2 + (\Pi_h\bm v_h,\nabla_h q_h)_\Omega
       - \langle \bm v_h\cdot\bm n, \pi_h q_h\rangle_{\Gamma_C} \\
      &\quad + \langle \beta_h, (\pi_h q_h)^+\rangle_{\Gamma_C}
       - \langle \alpha_h, (\pi_h q_h)^-\rangle_{\Gamma_C}
       +\tfrac{1}{2} \|\bm K_h^{-\frac{1}{2}}\Pi_h\bm v_h\|_{2,\Omega}^2.
\end{align*}
Since $(\Pi_h\bm v_h,\nabla_h q_h)_\Omega
= (\bm K_h^{\frac12} \nabla_h q_h,\,\bm K_h^{-\frac12}\Pi_h\bm v_h)_\Omega$,
the result follows after completing the square.
\end{proof}

Next, we identify \emph{optimal strong convexity measures}
$\rho_{I,h}^2: K_h \to [0,+\infty)$ and $\rho_{-D,h}^2: K_h^\star \to
[0,+\infty)$ for the discrete primal energy
functional~\eqref{eq:discrete_predual} at a discrete primal solution $p_h^{cr}
\in K_h$, and for the negative of the discrete dual energy
functional~\eqref{eq:discrete_dual_functional} at the discrete dual
solution $\bm u_h^{rt} \in K_h^\star$. Let
\begin{align*}
    \rho_{I,h}^2(q_h) &:= I_h^{cr}(q_h) - I_h^{cr}(p_h^{cr}), \\
    \rho_{-D,h}^2(\bm v_h) &:= -D^{rt}(\bm v_h) + D^{rt}(\bm u_h^{rt}).
\end{align*}
The following lemma shows that, similar to the discrete primal-dual gap
estimator, the strong convexity measures decompose into a contribution
measuring the violation of the optimality
condition~\eqref{eq:disc_optimality_1} and a contribution measuring the
violation of the optimality condition~\eqref{eq:disc_optimality_2}.

\begin{lemma}[Optimal strong convexity measures]\label{lem:disc-convexity}
For every $q_h \in K_h$ and every $\bm v_h \in K_h^\star$,
\begin{align}
    \rho_{I,h}^2(q_h) &= \tfrac{1}{2}\|\bm K_h^{\frac12}\nabla_h (q_h - p_h^{cr})\|_{2,\Omega}^2
                  - \langle \bm u_h^{rt} \cdot \bm n,\, \pi_h q_h\rangle_{\Gamma_C}
                  + \langle \beta_h, (\pi_h q_h)^+\rangle_{\Gamma_C}
                  - \langle \alpha_h, (\pi_h q_h)^-\rangle_{\Gamma_C}, \label{eq:disc-rho-I} \\
    \rho_{-D,h}^2(\bm v_h) &= \tfrac{1}{2} \|\bm K_h^{-\frac{1}{2}}\Pi_h(\bm v_h - \bm u_h^{rt})\|_{2,\Omega}^2
                  - \langle \bm v_h \cdot \bm n,\, \pi_h p_h^{cr}\rangle_{\Gamma_C}
                  + \langle \beta_h, (\pi_h p_h^{cr})^+\rangle_{\Gamma_C}
                  - \langle \alpha_h, (\pi_h p_h^{cr})^-\rangle_{\Gamma_C}. \label{eq:disc-rho-D}
\end{align}
\end{lemma}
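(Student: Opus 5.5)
The plan is to mirror the proof of \Cref{lem:rho-measures}, replacing the continuous integration by parts with the discrete identity \eqref{eq:discrete-IBP} and the continuous optimality conditions with \eqref{eq:disc_optimality_1}--\eqref{eq:disc_optimality_2}.

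\textbf{Primal measure \eqref{eq:disc-rho-I}.} Start from $\rho_{I,h}^2(q_h)=I_h^{cr}(q_h)-I_h^{cr}(p_h^{cr})$. Expand the quadratic term around $p_h^{cr}$:
\[
\tfrac12\|\bm K_h^{\frac12}\nabla_h q_h\|_{2,\Omega}^2-\tfrac12\|\bm K_h^{\frac12}\nabla_h p_h^{cr}\|_{2,\Omega}^2
=\tfrac12\|\bm K_h^{\frac12}\nabla_h(q_h-p_h^{cr})\|_{2,\Omega}^2+(\bm K_h\nabla_h p_h^{cr},\nabla_h(q_h-p_h^{cr}))_\Omega .
\]
By \eqref{eq:disc_optimality_1}, the cross term equals $-(\Pi_h\bm u_h^{rt},\nabla_h(q_h-p_h^{cr}))_\Omega$. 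Since $\operatorname{div}\bm u_h^{rt}=f_h$ is elementwise constant, $(f_h,\Pi_h(q_h-p_h^{cr}))_\Omega=(\operatorname{div}\bm u_h^{rt},\Pi_h(q_h-p_h^{cr}))_\Omega$. Applying \eqref{eq:discrete-IBP} to the difference $q_h-p_h^{cr}\in\mathcal S^{1,\mathrm{cr}}(\mathcal T_h)$ then gives
\[
-(\Pi_h\bm u_h^{rt},\nabla_h(q_h-p_h^{cr}))_\Omega-(f_h,\Pi_h(q_h-p_h^{cr}))_\Omega=-\langle\bm u_h^{rt}\cdot\bm n,\pi_h(q_h-p_h^{cr})\rangle_{\partial\Omega}.
\]
Both $q_h$ and $p_h^{cr}$ lie in $K_h$, so they agree at the barycenters of the facets in $\mathcal S_h^D$. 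Hence $\pi_h(q_h-p_h^{cr})=0$ on $\Gamma_D$, and only the $\Gamma_C$ part survives. Finally, use \eqref{eq:disc_optimality_2} to replace $\langle\bm u_h^{rt}\cdot\bm n,\pi_h p_h^{cr}\rangle_{\Gamma_C}$ by $\langle\beta_h,(\pi_hp_h^{cr})^+\rangle_{\Gamma_C}-\langle\alpha_h,(\pi_hp_h^{cr})^-\rangle_{\Gamma_C}$. This cancels the $p_h^{cr}$ boundary terms coming from $-I_h^{cr}(p_h^{cr})$ and leaves exactly \eqref{eq:disc-rho-I}.

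\textbf{Dual measure \eqref{eq:disc-rho-D}.} Start from $\rho_{-D,h}^2(\bm v_h)=-D_h^{rt}(\bm v_h)+D_h^{rt}(\bm u_h^{rt})$. Expanding the quadratic terms gives $\tfrac12\|\bm K_h^{-\frac12}\Pi_h(\bm v_h-\bm u_h^{rt})\|_{2,\Omega}^2$ plus the cross term $(\bm K_h^{-1}\Pi_h\bm u_h^{rt},\Pi_h(\bm v_h-\bm u_h^{rt}))_\Omega$, and the Dirichlet term $\langle p_D^h,(\bm v_h-\bm u_h^{rt})\cdot\bm n\rangle_{\Gamma_D}$ remains. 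By \eqref{eq:disc_optimality_1}, the cross term becomes $-(\nabla_h p_h^{cr},\Pi_h(\bm v_h-\bm u_h^{rt}))_\Omega$. Apply \eqref{eq:discrete-IBP} with $\bm v_h-\bm u_h^{rt}$ and $p_h^{cr}$. The divergence term drops because $\operatorname{div}(\bm v_h-\bm u_h^{rt})=f_h-f_h=0$, so the cross term equals $-\langle(\bm v_h-\bm u_h^{rt})\cdot\bm n,\pi_hp_h^{cr}\rangle_{\partial\Omega}$.

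Here the Dirichlet pairing is understood as the $L^2(\Gamma_D)$ product, since RT normal traces are facetwise constant and the pairing with $p_D^h$ reduces to $\pi_h p_D^h$. On $\Gamma_D$ we have $\pi_hp_h^{cr}=\pi_hp_D^h$, so the $\Gamma_D$ part cancels the Dirichlet term. This leaves $-\langle\bm v_h\cdot\bm n,\pi_hp_h^{cr}\rangle_{\Gamma_C}+\langle\bm u_h^{rt}\cdot\bm n,\pi_hp_h^{cr}\rangle_{\Gamma_C}$. Applying \eqref{eq:disc_optimality_2} to the second term yields \eqref{eq:disc-rho-D}.

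\textbf{Main obstacle.} The point needing care is the boundary bookkeeping on $\Gamma_D$. It relies on the fact that for CR functions, fixing the facet-barycenter values on $\mathcal S_h^D$ fixes $\pi_h$ of the trace there. It also relies on reading $\langle p_D^h,\bm v_h\cdot\bm n\rangle_{\Gamma_D}$ as $\langle \pi_hp_D^h,\bm v_h\cdot\bm n\rangle_{\Gamma_D}$, which is valid because the RT normal trace is piecewise constant. Everything else is completing the square.
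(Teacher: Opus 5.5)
Your proposal is correct and follows essentially the same route as the paper: expand each energy difference, apply the discrete integration-by-parts identity \eqref{eq:discrete-IBP} using $\operatorname{div}\bm u_h^{rt}=f_h$ (resp.\ $\operatorname{div}(\bm v_h-\bm u_h^{rt})=0$), complete the square via \eqref{eq:disc_optimality_1}, and cancel the $p_h^{cr}$ boundary terms via \eqref{eq:disc_optimality_2}. The differences are cosmetic. You complete the square before integrating by parts rather than after, and you make the $\Gamma_D$ cancellation explicit (facet means of Crouzeix--Raviart functions on boundary facets equal barycenter values, and Raviart--Thomas normal traces are facetwise constant), which the paper uses only implicitly.
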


\begin{proof}
To show~\eqref{eq:disc-rho-I}, we expand $\rho_{I,h}^2(q_h)=I_h^{cr}(q_h)-I_h^{cr}(p_h^{cr})$:
\begin{align*}
    \rho_{I,h}^2(q_h)
    &= \tfrac{1}{2} \|\bm K_h^{\frac12}\nabla_h q_h\|_{2,\Omega}^2
       -\tfrac{1}{2} \|\bm K_h^{\frac12} \nabla_h p_h^{cr}\|_{2,\Omega}^2
       - (f_h,\Pi_h(q_h-p_h^{cr}))_\Omega \\
    &\quad + \langle \beta_h, (\pi_h q_h)^+ - (\pi_h p_h^{cr})^+\rangle_{\Gamma_C}
       - \langle \alpha_h, (\pi_h q_h)^- - (\pi_h p_h^{cr})^-\rangle_{\Gamma_C}.
\end{align*}
Substituting $f_h=\textup{div}\,\bm u_h^{rt}$ and applying the discrete
integration-by-parts identity~\eqref{eq:discrete-IBP},
\begin{align*}
    \rho_{I,h}^2(q_h)
    &= \tfrac{1}{2} \|\bm K_h^{\frac12}\nabla_h q_h\|_{2,\Omega}^2
       -\tfrac{1}{2} \|\bm K_h^{\frac12} \nabla_h p_h^{cr}\|_{2,\Omega}^2
       +(\Pi_h\bm u_h^{rt},\nabla_h(q_h-p_h^{cr}))_\Omega
       - \langle\bm u_h^{rt}\cdot\bm n,\pi_h(q_h-p_h^{cr})\rangle_{\Gamma_C} \\
    &\quad + \langle \beta_h, (\pi_h q_h)^+ - (\pi_h p_h^{cr})^+\rangle_{\Gamma_C}
       - \langle \alpha_h, (\pi_h q_h)^- - (\pi_h p_h^{cr})^-\rangle_{\Gamma_C} \\
    &= \tfrac{1}{2}\|\bm K_h^{\frac12}\nabla_h(q_h-p_h^{cr})\|_{2,\Omega}^2
       - \langle \bm u_h^{rt}\cdot\bm n,\, \pi_h q_h\rangle_{\Gamma_C}
       + \langle \beta_h, (\pi_h q_h)^+\rangle_{\Gamma_C}
       - \langle \alpha_h, (\pi_h q_h)^-\rangle_{\Gamma_C},
\end{align*}
where we have used that $q_h-p_h^{cr} \in
\mathcal{S}^{1,\mathrm{cr}}_{D}(\mathcal{T}_h)$, that $\Pi_h\bm u_h^{rt} = -\bm
K_h\nabla_h p_h^{cr}$ by~\eqref{eq:disc_optimality_1}, and that $\langle\bm u_h^{rt}\cdot \bm n, \pi_h p_h^{cr}\rangle_{\Gamma_C}
= \langle\beta_h, (\pi_h p_h^{cr})^+\rangle_{\Gamma_C} - \langle\alpha_h,
(\pi_h p_h^{cr})^-\rangle_{\Gamma_C}$
by~\eqref{eq:disc_optimality_2} to pass to the second line.

To show~\eqref{eq:disc-rho-D}, we expand $\rho_{-D,h}^2(\bm v_h)=-D^{rt}(\bm v_h)+D^{rt}(\bm u_h^{rt})$:
\begin{align*}
    \rho_{-D,h}^2(\bm v_h)
    &= \tfrac{1}{2}\|\bm K_h^{-\frac{1}{2}}\Pi_h\bm v_h\|_{2,\Omega}^2
       - \tfrac{1}{2}\|\bm K_h^{-\frac{1}{2}}\Pi_h\bm u_h^{rt}\|_{2,\Omega}^2
       + \langle p_D^h, (\bm v_h -\bm u_h^{rt})\cdot\bm n\rangle_{\Gamma_D} \\
    &=  \tfrac{1}{2}\|\bm K_h^{-\frac{1}{2}}\Pi_h(\bm v_h-\bm u_h^{rt})\|_{2,\Omega}^2
       + (\bm K_h^{-\frac{1}{2}}\Pi_h(\bm v_h-\bm u_h^{rt}),\,\bm K_h^{-\frac{1}{2}}\Pi_h\bm u_h^{rt})_\Omega
       + \langle p_D^h,(\bm v_h-\bm u_h^{rt})\cdot\bm n\rangle_{\Gamma_D}.
\end{align*}
Then, using \eqref{eq:disc_optimality_1} and the discrete
integration-by-parts identity~\eqref{eq:discrete-IBP},
\begin{align*}
        \rho_{-D,h}^2(\bm v_h)
    &=  \tfrac{1}{2}\|\bm K_h^{-\frac{1}{2}}\Pi_h(\bm v_h-\bm u_h^{rt})\|_{2,\Omega}^2
       - (\Pi_h(\bm v_h-\bm u_h^{rt}), \nabla_h p_h^{cr})_\Omega
       + \langle p_D^h,(\bm v_h-\bm u_h^{rt})\cdot\bm n\rangle_{\Gamma_D} \\
    &=  \tfrac{1}{2}\|\bm K_h^{-\frac{1}{2}}\Pi_h(\bm v_h-\bm u_h^{rt})\|_{2,\Omega}^2
       - \langle (\bm v_h -\bm u_h^{rt})\cdot \bm n, \pi_h p_h^{cr} \rangle_{\Gamma_C},
\end{align*}
where we have used the fact that $\textup{div}\,(\bm v_h - \bm u_h^{rt}) = 0$.
Finally, another application of \eqref{eq:disc_optimality_2}
yields
\begin{align*}
        \rho_{-D,h}^2(\bm v_h)
    &=  \tfrac{1}{2}\|\bm K_h^{-\frac{1}{2}}\Pi_h(\bm v_h-\bm u_h^{rt})\|_{2,\Omega}^2
       - \langle \bm v_h \cdot \bm n, \pi_h p_h^{cr} \rangle_{\Gamma_C}
       + \langle \beta_h, (\pi_h p_h^{cr})^+\rangle_{\Gamma_C}
       - \langle \alpha_h, (\pi_h p_h^{cr})^-\rangle_{\Gamma_C}.
\end{align*}
The proof is now complete.
\end{proof}

We next derive an error identity that
characterizes the \emph{discrete primal-dual total error}
$\rho_{\mathrm{tot},h}^2 : K_h \times K_h^\star \to [0,+\infty)$, defined
for every $(q_h,\bm v_h) \in K_h \times K_h^\star$ by
\begin{align} \label{eq:disc-total_error}
    \rho_{\mathrm{tot},h}^2(q_h,\bm v_h) := \rho_{I,h}^2(q_h) + \rho_{-D,h}^2(\bm v_h),
\end{align}
in terms of the discrete primal-dual gap estimator~\eqref{eq:disc-gap}:
\begin{theorem}[Discrete a posteriori error identity]\label{thm:disc-aposteriori-id}
For every $q_h\in K_h$ and every $\bm v_h\in K_h^\star$,
\begin{equation*}
    \rho_{\mathrm{tot},h}^2(q_h,\bm v_h) = \eta_{\mathrm{gap},h}^2(q_h,\bm v_h).
\end{equation*}
\end{theorem}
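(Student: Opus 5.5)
The identity will follow exactly as in the continuous case (\Cref{thm:aposteriori-id}), with the continuous strong duality replaced by its discrete counterpart. By the definitions of $\rho_{I,h}^2$ and $\rho_{-D,h}^2$, for every $(q_h,\bm v_h)\in K_h\times K_h^\star$ we have
\begin{align*}
\rho_{\mathrm{tot},h}^2(q_h,\bm v_h)
&= \bigl(I_h^{cr}(q_h)-I_h^{cr}(p_h^{cr})\bigr)+\bigl(-D^{rt}(\bm v_h)+D^{rt}(\bm u_h^{rt})\bigr).
\end{align*}
The terms $-I_h^{cr}(p_h^{cr})$ and $D^{rt}(\bm u_h^{rt})$ cancel by the discrete strong-duality relation \eqref{eq:disc_strong_duality} of \Cref{thm:disc-strong-duality}. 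This leaves $I_h^{cr}(q_h)-D^{rt}(\bm v_h)=\eta_{\mathrm{gap},h}^2(q_h,\bm v_h)$ by \eqref{eq:disc-gap}.

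To apply \Cref{thm:disc-strong-duality} we need $K_h^\star\neq\emptyset$, which is the standing setting in which $\bm u_h^{rt}$ and $p_h^{cr}$ are defined. This holds by \Cref{prop:disc-K-weakly-closed} under $\Gamma_D\neq\emptyset$ or the discrete compatibility condition \eqref{eq:disc-compatibility}. We also need all quantities to be finite. This is true because $q_h\in K_h$ and $\bm v_h\in K_h^\star$, so both indicator functionals vanish, and the discrete functionals are finite sums of finite terms. If $p_h^{cr}$ is not unique (the case $\Gamma_D=\emptyset$), the value $I_h^{cr}(p_h^{cr})$ is the same for every minimizer, so $\rho_{I,h}^2$ is well defined.

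There is essentially no obstacle here: the only real input is the discrete strong duality, which was already established. As a consistency check, one could also add the expressions \eqref{eq:disc-rho-I} and \eqref{eq:disc-rho-D} from \Cref{lem:disc-convexity} and compare with \Cref{lem:disc-gap-decomp}. That route requires expanding the quadratic terms with \eqref{eq:disc_optimality_1}, and it is unnecessary for the proof.
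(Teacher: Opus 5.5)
Your proposal is correct and matches the paper's proof: write $\rho_{\mathrm{tot},h}^2$ as the sum of the two energy differences, cancel $I_h^{cr}(p_h^{cr})$ against $D^{rt}(\bm u_h^{rt})$ using the discrete strong duality \eqref{eq:disc_strong_duality}, and recognize what remains as $\eta_{\mathrm{gap},h}^2$ from \eqref{eq:disc-gap}. Your side remarks are sound points that the paper leaves implicit: nonemptiness of $K_h^\star$, finiteness of all terms, and the fact that $I_h^{cr}(p_h^{cr})$ does not depend on which minimizer is chosen.
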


\begin{proof}
By the discrete strong duality identity~\eqref{eq:disc_strong_duality},
$I_h^{cr}(p_h^{cr})=D^{rt}(\bm u_h^{rt})$. Hence
\begin{align*}
    \rho_{\mathrm{tot},h}^2(q_h,\bm v_h)
    &= \del{I_h^{cr}(q_h) - I_h^{cr}(p_h^{cr})} + \del[1]{-D^{rt}(\bm v_h) + D^{rt}(\bm u_h^{rt})}
    = I_h^{cr}(q_h) - D^{rt}(\bm v_h)
    = \eta_{\mathrm{gap},h}^2(q_h,\bm v_h). \qedhere
\end{align*}
\end{proof}

\subsection{Convergence and a priori error estimate}

Using the discrete error identity of \Cref{thm:disc-aposteriori-id}, we now derive convergence under minimal regularity and explicit a priori error decay rates under fractional regularity assumptions.

\begin{theorem}[A priori error estimates]\label{thm:disc-apriori}
Suppose that $\alpha, \beta$, and $\bm K$ satisfy \Cref{assumption:data-assumption}. Let $(p,\bm u) \in K \times K^\star$ be an exact primal--dual pair from~\Cref{lem:strong-duality} and suppose that $\bm u \in (L^{q}(\Omega))^d$ with $q > 2$ so that $\Pi_h^{rt} \bm u$ is well-defined. The following statements apply:
\begin{itemize}
    \item[(i)] If merely $(p,\bm u) \in K \times \del[1]{K^\star \cap (L^{q}(\Omega))^d}$, then
    \begin{align} \label{eq:convergence_no_regularity}
        \lim_{h \to 0^+} \rho^2_{\textup{tot},h}(\Pi_h^{cr} p,\Pi_h^{rt}\bm u) = 0.
    \end{align}
    \item[(ii)] If, moreover,  $p \in H^{1+s}(\Omega)$ and $\bm K \in (W^{1,\infty}(\mathcal{T}_h))^{d\times d}$ with $\tfrac{1}{2} < s \le 1$, and $\alpha \in H^{t_\alpha}(\mathcal{S}_h^C)$, $\beta \in H^{t_\beta}(\mathcal{S}_h^C)$ with $0 < t_\alpha,t_\beta \le s-\tfrac{1}{2}$, then for $\varepsilon = 0$ when $s < 1$ and for all $\varepsilon > 0$ when $s=1$,
    \begin{align} \label{eq:convergence_regularity}
\rho_{\textup{tot},h}^2(\Pi_h^{cr} p,\Pi_h^{rt}\bm u) \lesssim h^{s+\textup{min}(t_\beta,t_\alpha)+\frac{1}{2} - \varepsilon}.
    \end{align}
In particular, if $t_\alpha = t_\beta = s-\tfrac{1}{2}$, then
    \begin{align} \label{eq:convergence_max_regularity}
\rho_{\textup{tot},h}^2(\Pi_h^{cr} p,\Pi_h^{rt}\bm u) \lesssim h^{2s - \varepsilon}.
    \end{align}
\end{itemize}
\end{theorem}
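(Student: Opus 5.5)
The plan is to apply the discrete error identity (\Cref{thm:disc-aposteriori-id}) with $(q_h,\bm v_h)=(\Pi_h^{cr}p,\Pi_h^{rt}\bm u)$. The estimate then reduces to bounding the two pieces of \Cref{lem:disc-gap-decomp}.

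\textbf{Admissibility.} First I check that this pair is admissible.
\begin{itemize}
\item For $\Pi_h^{cr}p\in K_h$: since $p=p_D$ on $\Gamma_D$ and $p_D^h=\Pi_h^{cr}\widehat p_D$, the facet means $\langle p\rangle_S$ and $\langle \widehat p_D\rangle_S$ coincide for $S\in\mathcal S_h^D$.
\item For $\Pi_h^{rt}\bm u\in K_h^\star$: by \eqref{eq:RT-comm-div}, $\mathrm{div}\,\Pi_h^{rt}\bm u=\Pi_h f=f_h$. By \eqref{eq:RT-comm-trace}, $\Pi_h^{rt}\bm u\cdot\bm n=\pi_h(\bm u\cdot\bm n)$, which lies in $[\pi_h\alpha,\pi_h\beta]=[\alpha_h,\beta_h]$ by monotonicity of facet averages.
\end{itemize}
Hence $\rho_{\mathrm{tot},h}^2=\eta_{\mathrm{gap},h,\mathrm I}^2+\eta_{\mathrm{gap},h,\mathrm{II}}^2$ evaluated at this pair.

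\textbf{Volume term.} By \eqref{eq:CR-comm-grad}, $\nabla_h\Pi_h^{cr}p=\Pi_h\nabla p$. Using $\bm u=-\bm K\nabla p$, the quantity $\bm K_h\Pi_h\nabla p+\Pi_h\Pi_h^{rt}\bm u$ splits into three pieces:
\begin{itemize}
\item $(\bm K_h-\bm K)\Pi_h\nabla p$;
\item $\bm K(\Pi_h\nabla p-\nabla p)$;
\item $\Pi_h(\Pi_h^{rt}\bm u-\bm u)+(\Pi_h\bm u-\bm u)$.
\end{itemize}
These are bounded using $L^2$-stability of $\Pi_h$ and the uniform bounds on $\bm K_h^{\pm 1/2}$. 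For (i), each piece tends to zero:
\begin{itemize}
\item $\Pi_h\to\mathrm{Id}$ strongly in $L^2$ by density;
\item $\bm K_h\to\bm K$ a.e. by Lebesgue differentiation, which with dominated convergence handles the first piece;
\item $\|\bm u-\Pi_h^{rt}\bm u\|_{2}\to0$ for $\bm u\in L^q$, $q>2$, via density and uniform $L^q\to L^2$ stability of $\Pi_h^{rt}$ on $\{\mathrm{div}\in L^2\}$.
\end{itemize}
For (ii), \eqref{eq:proj-approx-T}, \eqref{eq:proj-Linfty} and \eqref{eq:RT-approx} give $O(h^{2s})$, using $\bm u\in H^s$ since $p\in H^{1+s}$ and $\bm K\in W^{1,\infty}(\mathcal T_h)$. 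This is of higher order than the claimed rate because $t\le s-\frac12$.

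\textbf{Boundary term, reduction.} Write $\bar q:=\pi_h p$. Since $\bar q$, $\alpha_h$, $\beta_h$ and $\pi_h(\bm u\cdot\bm n)$ are facetwise constant, all facet projections can be dropped inside the pairings. With the continuous complementarity \eqref{eq:complementarity_pointwise}, this gives
\begin{align*}
\eta_{\mathrm{gap},h,\mathrm{II}}^2=\langle\beta-\bm u\cdot\bm n,\bar q^+-p^+\rangle_{\Gamma_C}+\langle\bm u\cdot\bm n-\alpha,\bar q^--p^-\rangle_{\Gamma_C},
\end{align*}
and both integrands are supported where the respective multiplier is nonzero. For (i), I bound this by $\|\beta-\alpha\|_{2,\Gamma_C}\|p-\pi_hp\|_{2,\Gamma_C}\to0$, using \eqref{eq:proj-approx-S} at $r=0$ plus density.

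\textbf{Boundary term, rate (main obstacle).} For (ii), I argue facet by facet on the upper-bound term; the lower-bound term is symmetric. Set $g:=\beta-\bm u\cdot\bm n\ge0$. Then $g\in H^{t_\beta}(S)$, since $\bm u\cdot\bm n|_S\in H^{s-1/2}(S)$ and $t_\beta\le s-\frac12$, and $g=0$ on $A:=\{p>0\}\cap S$.
\begin{itemize}
\item \emph{If $\bar q|_S\le0$:} the term is $-\int_S g\,p^+=0$.
\item \emph{If $\bar q|_S>0$:} the term is $\bar q\int_S g-\int_S gp^+=\bar q\int_S g$. Here I use $0<\bar q\le|S|^{-1}\int_A p$, which gives a factor $|A|/|S|$. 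I then control $\int_S g=\int_S(g-\langle g\rangle_A)$ by the local-average bound \eqref{eq:avg_bound_on_S}, which costs $|S|/|A|$ and exactly cancels that factor. This yields $|S|^{1/2}\,\|p-\langle p\rangle_{S}\|$-type factors.
\end{itemize}
The result is the product of $\|p-\pi_hp\|_{L^2(S)}\lesssim h^{s+1/2}|p|_{H^{s+1/2}(S)}$ and $\|g-\pi_hg\|_{L^2(S)}\lesssim h^{t_\beta}|g|_{H^{t_\beta}(S)}$. Summing with Cauchy--Schwarz gives $h^{s+t_\beta+1/2}$. Possibly \Cref{lem:mean_zero_tangential_gradient} and $\pi_h^1$ are needed to handle the linear part of $p$ when $s+\frac12>1$.

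When $s=1$, the trace of $p$ lies in $H^{3/2}$ but facetwise $H^{3/2}$ regularity up to facet boundaries fails, since \eqref{eq:proj-approx-S} only allows $r\le1$. Passing through $\pi_h^1$ and \eqref{eq:proj-approx-S1-grad} costs the $\varepsilon$. I expect this case bookkeeping, especially making the $|A|/|S|$ cancellation rigorous when $A$ is small, to be the main difficulty. The corollary \eqref{eq:convergence_max_regularity} then follows by inserting $t=s-\frac12$.
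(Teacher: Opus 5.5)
Your admissibility check, the volume term, and part (i) are fine and follow essentially the paper's route. The gap is the rate for the boundary term in (ii), which is the heart of the theorem.

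\textbf{The projection estimate you invoke is false.} The bound $\|p-\pi_hp\|_{L^2(S)}\lesssim h^{s+1/2}|p|_{H^{s+1/2}(S)}$ fails on the whole range $s>\tfrac12$ of (ii). Since $\pi_h$ maps onto facetwise constants, \eqref{eq:proj-approx-S} stops at $r=1$: an affine, nonconstant $p|_S$ has zero $H^{s+1/2}(S)$-seminorm but $p\neq\pi_hp$. Any bound by $\sum_{S}\|p-\pi_hp\|_{L^2(S)}\|g-\pi_hg\|_{L^2(S)}$ therefore gives at most $h^{1+\min(t_\alpha,t_\beta)}$. For $t_\alpha=t_\beta=s-\tfrac12$ this is $h^{s+1/2}$ instead of $h^{2s}$.

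Even that facetwise product bound is not yet established. Combining $\bar q\le|S|^{-1}\int_Ap$ with \eqref{eq:avg_bound_on_S} yields $\bar q\int_Sg\le 2|S|^{1/2}\langle p\rangle_A\|g-\pi_hg\|_{L^2(S)}$, and $\langle p\rangle_A$ measures the size of $p$, not its oscillation. You would also need $p\le0$ on $S\setminus A$ (so $\bar q\le|p-\bar q|$ there) and a case split on $|A|$ versus $\tfrac12|S|$.

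\textbf{The missing idea: pass to $p^\pm$ and use complementarity again, on tangential gradients.} Write $z_\beta=\beta-\bm u\cdot\bm n$ (your $g$).
\begin{enumerate}
\item The paper uses Jensen, $(\pi_hp)^\pm\le\pi_h(p^\pm)$, to bound the boundary term by $\langle z_\beta-\pi_hz_\beta,\pi_h(p^+)-p^+\rangle_{\Gamma_C}$ plus the $\alpha$-analogue.
\item It then splits $\pi_h(p^+)-p^+=(\pi_h^1p^+-p^+)+(\pi_hp^+-\pi_h^1p^+)$.
\item The first piece gives the rate via \eqref{eq:proj-approx-S1} and $p^+\in H^{s+1/2-\varepsilon}$. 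The $\varepsilon$ at $s=1$ comes from $p\mapsto p^+$ not preserving $H^{3/2}$, not from facet-boundary regularity.
\item The second piece equals $-\nabla_S\pi_h^1p^+\cdot(x-x_S)$ by \Cref{lem:mean_zero_tangential_gradient}. Estimated naively it again gives only $h\,\|z_\beta-\pi_hz_\beta\|\,\|\nabla_S\pi_h^1p^+\|\lesssim h^{1+t_\beta}$.
\end{enumerate}
The extra $h^{s-1/2}$ comes from $z_\beta=0$ on $\{p^+>0\}$ and $\nabla_Sp^+=0$ on $\{p^+=0\}$, handled by a case split on each facet:
\begin{itemize}
\item If $|S\cap\{p^+>0\}|\ge\tfrac12|S|$, use \eqref{eq:tangential_p+_vanish} to replace $\nabla_S\pi_h^1p^+$ by $\nabla_S(\pi_h^1p^+-p^+)$. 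Then control $z_\beta=z_\beta-\langle z_\beta\rangle_{S\cap\{p^+>0\}}$ by \eqref{eq:avg_bound_on_S}.
\item Otherwise, keep $z_\beta-\pi_hz_\beta$ and control $\nabla_Sp^+=\nabla_Sp^+-\langle\nabla_Sp^+\rangle_{S\cap\{p^+=0\}}$ by \eqref{eq:avg_bound_on_S} and \eqref{eq:proj-approx-S-grad}.
\end{itemize}
Your remark that $\pi_h^1$ ``may be needed'' points the right way, but without this step the proposal does not reach the stated exponent.
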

\begin{proof}
(i) By \Cref{thm:disc-aposteriori-id} and \Cref{lem:disc-gap-decomp},
\begin{align*}
    \rho^2_{\text{tot},h}(\Pi_h^{cr} p,\Pi_h^{rt}\bm u)
 &= \eta^2_{\text{gap},h}(\Pi_h^{cr} p,\Pi_h^{rt}\bm u) = I_h^1 + I_h^2,
\end{align*}
where we have defined
\begin{align}
       I_h^1 &:= \tfrac{1}{2}\|\bm K_h^{\frac{1}{2}}\,\nabla_h\Pi_h^{cr} p
       + \bm K_h^{-\frac{1}{2}}\,\Pi_h\Pi_h^{rt}\bm u\|_{2,\Omega}^2, \label{eq:Ih1}\\
   I_h^2 &:=  -\langle \Pi_h^{rt}\bm u\cdot \bm n, \pi_h \Pi_h^{cr} p\rangle_{\Gamma_C}
   + \langle \beta_h, (\pi_h \Pi_h^{cr}p)^+\rangle_{\Gamma_C} - \langle \alpha_h,(\pi_h \Pi_h^{cr}p)^-\rangle_{\Gamma_C}.  \label{eq:Ih2}
\end{align}
We begin by bounding $I_h^1$. By \eqref{eq:CR-comm-grad} and \eqref{eq:cts_optimality_1}, $\nabla_h \Pi_h^{cr} p = \Pi_h \nabla p = -\Pi_h(\bm K^{-1} \bm u)$. Therefore,
\begin{align} \label{eq:Ih1.1}
    I_h^1 &= \tfrac{1}{2}\|\bm K_h^{-\frac{1}{2}}\,\Pi_h\Pi_h^{rt}\bm u -\bm K_h^{\frac{1}{2}}\Pi_h(\bm K^{-1} \bm u)
       \|_{2,\Omega}^2.
\end{align}
Since $\bm K_h \in (\mathcal{L}_h^0(\mathcal{T}_h))^{d\times d}$, it holds that
$\bm K_h^{\frac{1}{2}}\,\Pi_h(\bm K_h^{-1}\bm u)
= \bm K_h^{-\frac{1}{2}}\,\Pi_h\bm u$.  Adding and subtracting this
quantity inside the norm in~\eqref{eq:Ih1.1} and using
the triangle inequality and the $L^\infty$-stability of $\Pi_h$, 
\begin{align}
   I_h^1
   &\lesssim \|\bm K_h^{-\frac{1}{2}}\,\Pi_h(\Pi_h^{rt}\bm u-\bm u) \|_{2,\Omega}^2
    + \|\bm K_h^{\frac{1}{2}} \,\Pi_h((\bm K_h^{-1}-\bm K^{-1})\bm u\bigr)\|_{2,\Omega}^2  
   \label{eq:I_h^1_bound}
\end{align}
Next, we turn to bounding $I_h^2$. On the one hand, the identities \eqref{eq:CR-comm-trace} and \eqref{eq:RT-comm-trace} yield
\begin{align}
       I_h^2& =  -\langle \bm u\cdot \bm n, \pi_h  p\rangle_{\Gamma_C}
   + \langle \beta, (\pi_h p)^+\rangle_{\Gamma_C} - \langle \alpha,(\pi_h p)^-\rangle_{\Gamma_C} \notag \\
   &= \langle \beta - \bm u\cdot \bm n, (\pi_h p)^+\rangle_{\Gamma_C}
   + \langle \bm u\cdot \bm n - \alpha, (\pi_h p)^-\rangle_{\Gamma_C},
\end{align}
where we have decomposed $\pi_h p$ into its positive and negative parts and used the fact that $q_h^+,q_h^- \in \mathcal{L}_h^0(\mathcal{S}_h^C)$ for any $q_h \in \mathcal{L}_h^0(\mathcal{S}_h^C)$. Optimality condition \eqref{eq:cts_optimality_2} then yields, after splitting $p$ into its positive and negative parts,
\begin{align}
   I_h^2 &=  \langle \beta - \bm u\cdot \bm n, (\pi_h p)^+ - p^+\rangle_{\Gamma_C}
   + \langle \bm u\cdot \bm n - \alpha, (\pi_h p)^- - p^-\rangle_{\Gamma_C}. \label{eq:Ih2_reg_bnd}
\end{align}
Using the Cauchy--Schwarz inequality and the fact that the maps $p \mapsto p^\pm$ are $1$-Lipschitz,
\begin{align}
   I_h^2 
    \le \del{\|\beta - \bm u \cdot \bm n\|_{2,\Gamma_C} + \|\bm u \cdot \bm n - \alpha\|_{2,\Gamma_C}} \|\pi_h p - p\|_{2,\Gamma_C}. \label{eq:Ih2_no_reg_bnd}
\end{align}
Using the bounds \eqref{eq:I_h^1_bound} and \eqref{eq:Ih2_no_reg_bnd} and the stability of the $L^2$-projection, we find
\begin{align} \label{eq:rho_bound_low_regularity}
    \rho^2_{\textup{tot},h}(\Pi_h^{cr} p,\Pi_h^{rt}\bm u) \lesssim \|\Pi_h^{rt}\bm u-\bm u\|_{2,\Omega}^2 + \| (\bm K_h^{-1}-\bm K^{-1})\bm u\|_{2,\Omega}^2 + \|\pi_h p - p\|_{2,\Gamma_C}.
\end{align}
One can show using a density argument that as $h\to 0^+$, $\Pi_h^{rt} \bm u \to \bm u$ in $V$,  $\pi_h p \to p$ in $L^2(\Gamma_C)$, and $\bm K_h \to \bm K$ in $(L^2(\Omega))^{d \times d}$. Since $\bm K^{-1}, \bm K_h^{-1}$ are uniformly bounded, we conclude $\bm K_h^{-1} \to \bm K^{-1}$ in $(L^2(\Omega))^{d \times d}$ and therefore also pointwise a.e. in $\Omega$. The dominated convergence theorem then yields $\bm K_h^{-1} \bm u \to \bm K^{-1} \bm u$ in $(L^2(\Omega))^d$. Thus, passing to the limit as $h \to 0^+$ in \eqref{eq:rho_bound_low_regularity} yields \eqref{eq:convergence_no_regularity}.

\bigskip
\noindent (ii) Suppose now that $p \in H^{1+s}(\Omega)$ with $\tfrac{1}{2} < s \le 1$. By the assumed broken regularity $\bm K \in (W^{1,\infty}(\mathcal{T}_h))^{d \times d}$, it follows that $\bm u \in (H^s(\mathcal{T}_h))^d$ and since $s > \tfrac{1}{2}$, it holds that $\bm u \cdot \bm n |_{\Gamma_C}\in H^{s-\frac{1}{2}}(\Gamma_C)$. Moreover, the following result concerning the regularity of the positive (resp. negative) parts of functions holds (\emph{cf}. \cite[Rem. 0.1]{Savare1996}):
\begin{align*}
 p \in H^{s+\frac{1}{2}}(\Gamma_C) \quad \Rightarrow \quad p^+,p^- \in H^{s+\frac{1}{2}}(\Gamma_C), \quad \forall\, s < 1.
\end{align*}
Thus, at the endpoint $s=1$, we can only conclude that $p^+,p^- \in H^{s+\frac{1}{2}- \varepsilon}(\Gamma_C)$ for all $\varepsilon > 0$.

We begin by bounding the volume contribution $I_h^1$ via the right hand side of \eqref{eq:I_h^1_bound}. Note that
\begin{align*}
   I_h^1
   & \lesssim \| \bm K^{-\frac{1}{2}} \|_{\infty,\Omega}^2\|\Pi_h^{rt}\bm u-\bm u\|_{2,\Omega}^2 +  \| \bm K^{\frac{1}{2}} \|_{\infty,\Omega}^2 \| \bm K^{-1}-\bm K_h^{-1} \|_{\infty,\Omega}^2 \|\bm u\|_{2,\Omega}^2. 
\end{align*}
Therefore, the approximation properties of the Raviart--Thomas interpolant~\eqref{eq:RT-approx} and of the $L^2$-projection $\Pi_h$~\eqref{eq:proj-Linfty} yield
\begin{align} \label{eq:u-Pi_rtu}
  \|\Pi_h^{rt} \bm u - \bm u\|_{2,\Omega}^2 &\lesssim h^{2s}|\bm u|_{H^s(\mathcal{T}_h)}^2, \\
  \label{eq:Kinv-Pi_hKinv}
\|\bm K^{-1} - \bm K_h^{-1} \|_{\infty,\Omega}^2 &\lesssim h^{2} | \bm K^{-1} |_{W^{1,\infty}(\mathcal{T}_h)}^2.
\end{align}
and therefore, we have the following bound on the volume contribution:
\begin{align} \label{eq:Ih1_rate}
    I_h^1 \lesssim h^{2s}.
\end{align}
It remains to bound the boundary contribution $I_h^2$.
Note that proceeding from the bound \eqref{eq:Ih2_no_reg_bnd} used to prove \eqref{eq:convergence_no_regularity} results in a suboptimal error estimate. Therefore, we instead return to \eqref{eq:Ih2_reg_bnd} and use the convexity of the maps $p \mapsto p^+$, $p \mapsto p^-$ and Jensen's inequality,
\begin{align*}
   I_h^2 &\le  \langle \beta - \bm u\cdot \bm n, \pi_h (p^+) - p^+\rangle_{\Gamma_C}
   + \langle \bm u\cdot \bm n - \alpha, \pi_h (p^-) - p^-\rangle_{\Gamma_C}  \\
   &=  \langle z_\beta - \pi_h z_\beta, \pi_h (p^+) - p^+\rangle_{\Gamma_C}
   + \langle z_\alpha - \pi_h z_\alpha, \pi_h (p^-) - p^-\rangle_{\Gamma_C} \\
   &:= I_h^{\beta} + I_h^{\alpha},
\end{align*}
where we have defined $z_\beta = \beta - \bm u\cdot \bm n$ and $z_\alpha =  \bm u\cdot \bm n - \alpha$ for notational brevity. Adding and subtracting $\pi_h^1 p^+$, using the fact that $\pi_h\del{\pi_h (p^+) - \pi_h^1 (p^+)}= 0$, we have
\begin{align*}
    I_h^{\beta} &= \langle z_\beta - \pi_h z_\beta, \pi_h^1 (p^+) - p^+\rangle_{\Gamma_C} + \langle  z_\beta - \pi_h z_\beta, \pi_h (p^+) - \pi_h^1 (p^+)\rangle_{\Gamma_C}  \\
   &= \langle z_\beta - \pi_h z_\beta, \pi_h^1 (p^+) - p^+\rangle_{\Gamma_C} + \langle  z_\beta, \pi_h (p^+) - \pi_h^1 (p^+)\rangle_{\Gamma_C} \\
    &:= I_h^{\beta,1} + I_h^{\beta,2}.
\end{align*}
Similarly, we have
\begin{align*}
    I_h^\alpha &= \langle z_\alpha - \pi_h z_\alpha, \pi_h^1 (p^-) - p^-\rangle_{\Gamma_C} + \langle  z_\alpha, \pi_h (p^-) - \pi_h^1 (p^-)\rangle_{\Gamma_C} := I_h^{\alpha,1} + I_h^{\alpha,2}.
\end{align*}
To bound $I_h^{\beta,1}$ and $I_h^{\alpha,1}$, we apply the Cauchy--Schwarz
inequality, the facet approximation estimate~\eqref{eq:proj-approx-S} for
$\pi_h$ (with $r=t_\beta$ and $r=t_\alpha$) and~\eqref{eq:proj-approx-S1} for
$\pi_h^1$ (with $r=s+\tfrac12-\varepsilon$), together with the trace bound
$|p^\pm|_{H^{s+1/2-\varepsilon}(\mathcal S_h^C)}\lesssim\|p\|_{H^{1+s}(\Omega)}$:
\begin{align}
    I_h^{\beta,1} + I_h^{\alpha,1} &\le \|z_\beta - \pi_h z_\beta\|_{2,\Gamma_C} \|p^+ - \pi_h^1(p^+) \|_{2,\Gamma_C} + \|z_\alpha - \pi_h z_\alpha\|_{2,\Gamma_C} \|p^- - \pi_h^1(p^-) \|_{2,\Gamma_C} \notag \\
    &\lesssim h^{s+\textup{min}(t_\beta,t_\alpha)+\frac{1}{2}-\varepsilon} \del[2]{|\beta - \bm u \cdot \bm n|_{H^{t_\beta}(\mathcal{S}_h^C)}^2 + | \bm u \cdot \bm n - \alpha|_{H^{t_\alpha}(\mathcal{S}_h^C)}^2 + \|p\|_{H^{s+1}(\Omega)}^2}.
    \label{eq:Ihbeta1_Ihalpha1_sum}
\end{align}
To bound $I_h^{\beta,2}$ and $I_h^{\alpha,2}$, we apply \Cref{lem:mean_zero_tangential_gradient} to find
\begin{align*}
   I_h^{\beta,2} +  I_h^{\alpha,2} &= 
    -\sum_{S \in \mathcal{S}_h^C} \del[2]{\langle z_\beta , \nabla_S \pi_h^1 p^+ \cdot  (x - x_S) \rangle_{S} + \langle z_\alpha , \nabla_S \pi_h^1 p^- \cdot  (x - x_S) \rangle_{S}}.
\end{align*}
To derive sharp bounds on $I_h^{\beta,2} + I_h^{\alpha,2}$, we exploit complementarity (\emph{cf}. \Cref{rem:complementarity}) to perform a finer analysis on each facet $S \in \mathcal{S}_h^C$. 
Note that $z_\beta p^+ = 0$ and $z_\alpha p^- = 0$ a.e. on $\Gamma_C$. Consequently, $z_\beta = 0$ and $z_\alpha = 0$ a.e. on the sets $\cbr{p^+ > 0}$ and $\cbr{p^- > 0}$ respectively. This, combined with the fact that $\nabla_S p^+ = 0$ and $\nabla_S p^- = 0$ a.e. on the sets $\cbr{p^+ = 0}$ and $\cbr{p^- = 0}$ respectively, yields for all $S \in \mathcal{S}_h^C$
\begin{align}
\label{eq:tangential_p+_vanish}
    \langle z_\beta, \nabla_S p^+ \cdot (x-x_S) \rangle_{S} &= 0, \\
    \langle z_\alpha, \nabla_S p^- \cdot (x-x_S) \rangle_{S} &= 0.
    \label{eq:tangential_p-_vanish}
\end{align}
For a given facet $S \in \mathcal{S}_h^C$, there are three cases to consider depending on the relative sizes of the sets $\cbr[0]{p^+ > 0}$ and $\cbr[0]{p^- > 0}$. 
\medskip

\noindent \textbf{Case (1):} \,$|S\cap\{p^+>0\}|\ge\tfrac12|S|$ and therefore $|S\cap\{p^-=0\}|\ge\tfrac12|S|$. 

\smallskip
\noindent
In this case, since $z_\beta p^+ = 0$ a.e. on $\Gamma_C$,  complementarity forces $z_\beta = 0$ on $|S\cap\{p^+>0\}|$. Using this fact, \eqref{eq:tangential_p+_vanish}, H\"older's inequality, \eqref{eq:avg_bound_on_S}, the estimate~\eqref{eq:proj-approx-S} for $\pi_h$ (with $r=t_\beta$), and the estimate~\eqref{eq:proj-approx-S1-grad} for $\pi_h^1$ (with $r=s+\tfrac12-\varepsilon$), we find
\begin{align}
\langle z_\beta , \nabla_S &\pi_h^1 p^+ \cdot  (x - x_S) \rangle_{S} \notag \\&= \big\langle  z_\beta - \langle z_\beta \rangle_{S\cap\{p^+>0\}}, \nabla_S (\pi_h^1 p^+ - p^+) \cdot  (x - x_S) \big\rangle_{S} \notag \\
& \le \|z_\beta - \langle z_\beta \rangle_{S\cap\{p^+>0\}}\|_{2,S} \|\nabla_S (\pi_h^1 p^+ - p^+)\|_{2,S} \|x - x_S\|_{\infty,S} \notag \\
& \le 2h_S \| z_\beta - \pi_h z_\beta \|_{2,S} \|\nabla_S (\pi_h^1 p^+ - p^+)\|_{2,S} \notag \\
&\lesssim h_S^{s+t_\beta+\frac{1}{2}- \varepsilon} | \beta - \bm u \cdot \bm n |_{H^{t_\beta}(S)} |p^+|_{H^{s+\frac{1}{2}- \varepsilon}(S)}.\label{eq:p^+>0_dominant_bnd}
\end{align}
Moreover, using the fact that $\nabla_S\pi_h^1 p^- \cdot  (x - x_S)$ has vanishing mean, that $\langle \nabla_S p^- \rangle_{S\cap\{p^-=0\}} = 0$, H\"older's inequality, \eqref{eq:avg_bound_on_S}, the  estimate~\eqref{eq:proj-approx-S} for $\pi_h$ (with $r=t_\alpha$), the estimate~\eqref{eq:proj-approx-S1-grad} for $\pi_h^1$ (with $r=s+\tfrac12-\varepsilon$), and~\eqref{eq:proj-approx-S-grad} (with $r=s-\tfrac12-\varepsilon$), we find
\begin{align}
    \langle z_\alpha , \nabla_S &\pi_h^1 p^- \cdot  (x - x_S) \rangle_{S} \notag  \\&= \big\langle  z_\alpha - \pi_h z_\alpha, \del[1]{\nabla_S \pi_h^1 p^- - \langle \nabla_S p^- \rangle_{S\cap\{p^-=0\}}} \cdot  (x - x_S) \big\rangle_{S} \notag \\
    & \le \|z_\alpha - \pi_h z_\alpha\|_{2,S}\del{ \|\nabla_S (\pi_h^1 p^- - p^-) \|_{2,S} + \|\nabla_S p^- - \langle \nabla_S p^- \rangle_{S\cap\{p^-=0\}}\|_{2,S} }\|x - x_S\|_{\infty,S} \notag \\
    & \le 2h_S \|z_\alpha - \pi_h z_\alpha\|_{2,S} \del{ \|\nabla_S (\pi_h^1 p^- -p^-)\|_{2,S} + \|\nabla_S  p^- - \pi_h \nabla_S p^-)\|_{2,S} }\\
&\lesssim h_S^{s+t_\alpha+\frac{1}{2}- \varepsilon} | \bm u \cdot \bm n - \alpha|_{H^{t_\alpha}(S)} |p^-|_{H^{s+\frac{1}{2}- \varepsilon}(S)}. \label{eq:p^-=0_dominant_bnd}
\end{align}

\medskip
\noindent \textbf{Case (2):} \,$|S\cap\{p^->0\}|\ge\tfrac12|S|$ and therefore $|S\cap\{p^+=0\}|\ge\tfrac12|S|$. 

\smallskip
\noindent
As in the previous case, since $z_\alpha p^- = 0$ a.e. on $\Gamma_C$, complementarity forces $z_\alpha = 0$ on $|S\cap\{p^->0\}|$. Using this fact, \eqref{eq:tangential_p-_vanish}, H\"older's inequality, \eqref{eq:avg_bound_on_S}, the estimate~\eqref{eq:proj-approx-S} for $\pi_h$ (with $r=t_\alpha$), and the estimate~\eqref{eq:proj-approx-S1-grad} for $\pi_h^1$ (with $r=s+\tfrac12-\varepsilon$), we find
\begin{align} \label{eq:p^->0_dominant_bnd}
\langle z_\alpha , \nabla_S &\pi_h^1 p^- \cdot  (x - x_S) \rangle_{S}
\lesssim  h_S^{s+t_\alpha+\frac{1}{2}- \varepsilon} | \bm u \cdot \bm n - \alpha |_{H^{t_\alpha}(S)} |p^-|_{H^{s+\frac{1}{2}- \varepsilon}(S)}.
\end{align}
Moreover, using the fact that $\nabla_S \pi_h^1 p^+ \cdot  (x - x_S)$ has vanishing mean, that $\langle \nabla_S p^+ \rangle_{S\cap\{p^+=0\}} = 0$, H\"older's inequality, \eqref{eq:avg_bound_on_S}, the estimate~\eqref{eq:proj-approx-S} for $\pi_h$ (with $r=t_\beta$), the estimate~\eqref{eq:proj-approx-S1-grad} for $\pi_h^1$ (with $r=s+\tfrac12-\varepsilon$), and~\eqref{eq:proj-approx-S-grad} (with $r=s-\tfrac12-\varepsilon$), we find
\begin{align} 
    \langle z_\beta , \nabla_S &\pi_h^1 p^+ \cdot  (x - x_S) \rangle_{S} 
\lesssim h_S^{s+t_\beta+\frac{1}{2}- \varepsilon} | \beta - \bm u \cdot \bm n |_{H^{t_\beta}(S)} |p^+|_{H^{s+\frac{1}{2}- \varepsilon}(S)}.
   \label{eq:p^+=0_dominant_bnd}
\end{align}

\medskip
\noindent \textbf{Case (3):} Both $|S\cap\{p^+=0\}|\ge\tfrac12|S|$ and $|S\cap\{p^-=0\}|\ge\tfrac12|S|$. 

\smallskip
\noindent
In this case, we can argue identically as in the derivations of \eqref{eq:p^-=0_dominant_bnd} and \eqref{eq:p^+=0_dominant_bnd} to find
\begin{align} 
    \langle z_\alpha , \nabla_S &\pi_h^1 p^- \cdot  (x - x_S) \rangle_{S} 
    \lesssim h_S^{s+t_\alpha+\frac{1}{2}- \varepsilon} | \bm u \cdot \bm n - \alpha |_{H^{t_\alpha}(S)} |p^-|_{H^{s+\frac{1}{2}- \varepsilon}(S)}, 
    \label{eq:p^-=0_dominant_bnd.2} \\
    \langle z_\beta , \nabla_S &\pi_h^1 p^+ \cdot  (x - x_S) \rangle_{S} 
    \lesssim h_S^{s+t_\beta+\frac{1}{2}- \varepsilon} | \beta - \bm u \cdot \bm n |_{H^{t_\beta}(S)} |p^+|_{H^{s+\frac{1}{2}- \varepsilon}(S)}.
  \label{eq:p^+=0_dominant_bnd.2}
\end{align}

\smallskip
\noindent 
Combining \eqref{eq:p^+>0_dominant_bnd}--\eqref{eq:p^+=0_dominant_bnd.2}, summing over all $S \in \mathcal{S}_h^C$, and applying Young's inequality, we have
\begin{align} \label{eq:Ihbeta2_Ihalpha2_sum}
    I_{h}^{\beta,2} + I_h^{\alpha,2} \lesssim  
    h^{s+\textup{min}(t_\beta,t_\alpha)+\frac{1}{2}- \varepsilon} \del[2]{|\beta - \bm u \cdot \bm n|_{H^{t_\beta}(\mathcal{S}_h^C)}^2 + | \bm u \cdot \bm n - \alpha|_{H^{t_\alpha}(\mathcal{S}_h^C)}^2 + |p|_{H^{s+\frac{1}{2}}(\Gamma_C)}^2}.
\end{align}
Collecting the bounds \eqref{eq:Ih1_rate}, \eqref{eq:Ihbeta1_Ihalpha1_sum}, and \eqref{eq:Ihbeta2_Ihalpha2_sum} yields \eqref{eq:convergence_regularity}.
\end{proof}

\begin{remark}[Natural error norms]
\label{rem:disc-apriori-norms}
The estimates for $\rho_{\mathrm{tot},h}^2$ also give the corresponding
bounds for the natural discrete error norms. Indeed,
\begin{align*}
&\bigl\|\bm K_h^{\frac12}\nabla_h(\Pi_h^{\mathrm{cr}}p-p_h^{cr})\bigr\|_{2,\Omega}^{2}
 +\bigl\|\bm K_h^{-\frac12}\Pi_h(\Pi_h^{\mathrm{rt}}\bm u-\bm u_h^{rt})\bigr\|_{2,\Omega}^{2}
\lesssim
\rho_{\mathrm{tot},h}^{2}(\Pi_h^{\mathrm{cr}}p,\Pi_h^{\mathrm{rt}}\bm u).
\end{align*}
By the interpolation estimates for $\Pi_h^{\mathrm{cr}}$ and
$\Pi_h^{\mathrm{rt}}$, the triangle inequality further yields
\begin{align} \label{eq:true_error}
&\|\nabla p-\nabla_h p_h^{cr}\|_{2,\Omega}^{2}
 +\|\Pi_h\bm u-\Pi_h\bm u_h^{rt}\|_{2,\Omega}^{2}
\lesssim
\rho_{\mathrm{tot},h}^{2}(\Pi_h^{\mathrm{cr}}p,\Pi_h^{\mathrm{rt}}\bm u)+h^{2s}.
\end{align}
Consequently, passing to the limit as $h\to 0^+$ in \eqref{eq:true_error} shows that the left-hand side converges to zero in case~{\rm(i)} of
\Cref{thm:disc-apriori}. In case~{\rm(ii)}, explicit rates can be derived: for $\varepsilon = 0$ when $s < 1$ and every $\varepsilon>0$ when $s = 1$,
\begin{align*}
\|\nabla p-\nabla_h p_h^{cr}\|_{2,\Omega}^{2}
 +\|\Pi_h\bm u-\Pi_h\bm u_h^{rt}\|_{2,\Omega}^{2}
&\lesssim
h^{s+\min\{t_\alpha,t_\beta\}+\frac12-\varepsilon},
\end{align*}
and this becomes $h^{2s-\varepsilon}$ when
$t_\alpha=t_\beta=s-\frac12$.
\end{remark}

\begin{remark}
The question of whether explicit error rates can be deduced for $0<s\le\tfrac{1}{2}$ is open. The obstruction is that explicit error rates for the Raviart--Thomas interpolant require $\bm u \in (H^s(\Omega))^d$, $s>\tfrac{1}{2}$. Note that there are quasi-interpolation and projection operators into the
Raviart--Thomas space that yield approximation bounds under weaker
regularity assumptions; see, for instance,
\cite{ErnGuermond2017,ErnGudiSmearsVohralik2022}. However, they do not satisfy \eqref{eq:RT-comm-trace} and thus may not deliver an admissible vector field in $K_h^\star$.
\end{remark}

\section{The numerical algorithm}\label{Numerical-algorithm}
This section addresses the numerical solution of the discrete dual
problem~\eqref{eq:inner_max_unconstrained} and the recovery of the discrete
primal solution $p_h^{cr}\in\mathcal{S}^{1,cr}(\mathcal{T}_h)$ from the dual data.
In \Cref{sec:disc-KKT} we characterize the dual solution by a KKT system with
facetwise multipliers. In \Cref{sec:invMarini} we show that a discrete primal
solution is obtained from the KKT data by an explicit elementwise
postprocessing, generalizing the classical inverse Marini
formula~\cite{Marini85,BartelsKaltenbach24}. Notably, no further linear solve is
required to produce a primal approximation. In \Cref{sec:semismoothNewton} we formulate the semismooth Newton (interpreted as a primal--dual active-set method, see,  e.g. ~\cite{HintermuellerItoKunisch2003}) used to
solve the KKT system, with the discrete primal--dual gap~\eqref{eq:disc-gap}
serving as stopping criterion.

\subsection{The discrete KKT system}\label{sec:disc-KKT}
The discrete dual problem~\eqref{eq:inner_max_unconstrained} is a
finite-dimensional concave maximization problem over the polyhedral
set~$K_h^\star$ of~\eqref{eq:disc-Kh}: the objective is quadratic, and the
divergence and flux constraints are affine in the degrees of
freedom of $\mathcal{R}T^0(\mathcal{T}_h)$. Therefore, the KKT conditions are both necessary and sufficient for optimality (see, e.g., \cite[Ch.~26]{bauschke2017convex}). Introducing a multiplier
$\overline{p}_h^{rt}\in\mathcal{L}^0(\mathcal{T}_h)$ for the divergence
constraint and multipliers
$\lambda_h^a,\lambda_h^b\in\mathcal{L}^0(\mathcal{S}_h^C)$ for the lower and
upper flux bounds yields:

\begin{theorem}[Discrete KKT system]\label{thm:disc-KKT}
Assume $K_h^\star\ne\emptyset$. Then there exists a tuple
$(\bm u_h^{rt},\overline p_h^{rt},\lambda_h^{a},\lambda_h^{b})\in
 \mathcal{R}T^0(\mathcal{T}_h)\times\mathcal{L}^0(\mathcal{T}_h)\times\bigl(\mathcal{L}^0(\mathcal{S}_h^C)\bigr)^2$
such that for every $(\bm v_h,\overline q_h)^\top\in\mathcal{R}T^0(\mathcal{T}_h)\times\mathcal{L}^0(\mathcal{T}_h)$,
\begin{subequations}\label{eq:disc-KKT}
\begin{alignat}{2}
(\bm K_h^{-1}\Pi_h\bm u_h^{rt},\Pi_h\bm v_h)_\Omega-(\overline p_h^{rt},\mathrm{div}\,\bm v_h)_\Omega+\langle\lambda_h^a-\lambda_h^b,\bm v_h\cdot\bm n\rangle_{\Gamma_C}&=-\langle p_D^h,\bm v_h\cdot\bm n\rangle_{\Gamma_D},\label{eq:disc-KKT-a}\\
(\mathrm{div}\,\bm u_h^{rt},\overline q_h)_\Omega&=(f_h,\overline q_h)_\Omega,\label{eq:disc-KKT-b}\\
\alpha_h\le\bm u_h^{rt}\cdot\bm n&\le\beta_h\quad\text{on }\mathcal{S}_h^C,\label{eq:disc-KKT-c}\\
\lambda_h^a,\lambda_h^b&\le 0\quad\text{on }\mathcal{S}_h^C,\label{eq:disc-KKT-d}\\
\lambda_h^a(\bm u_h^{rt}\cdot\bm n-\alpha_h)=\lambda_h^b(\beta_h-\bm u_h^{rt}\cdot\bm n)&=0\quad\text{on }\mathcal{S}_h^C.\label{eq:disc-KKT-e}
\end{alignat}
\end{subequations}
Moreover, $\bm u_h^{rt}$ is the unique solution of~\eqref{eq:inner_max_unconstrained}. 
\end{theorem}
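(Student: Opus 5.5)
Existence and uniqueness of the maximizer $\bm u_h^{rt}$ come directly from \Cref{thm:disc-dual-exist} (the hypothesis $K_h^\star\neq\emptyset$ is exactly what that proof needs), so the task reduces to producing multipliers at this $\bm u_h^{rt}$ with the stated signs. I will treat \eqref{eq:inner_max_unconstrained} as the finite-dimensional convex program of minimizing $J(\bm v_h):=\tfrac12\|\bm K_h^{-1/2}\Pi_h\bm v_h\|_{2,\Omega}^2+\langle p_D^h,\bm v_h\cdot\bm n\rangle_{\Gamma_D}$ over $\mathcal{R}T^0(\mathcal T_h)$. This program has affine equality constraints $(\mathrm{div}\,\bm v_h-f_h,\overline q_h)_\Omega=0$ for all $\overline q_h\in\mathcal L^0(\mathcal T_h)$. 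It also has the facetwise affine inequality constraints $\alpha_h|_S-\bm v_h\cdot\bm n|_S\le0$ and $\bm v_h\cdot\bm n|_S-\beta_h|_S\le0$ for $S\in\mathcal S_h^C$; these are scalar conditions on the facet degrees of freedom of $\bm v_h$. The objective is convex and quadratic in finitely many coefficients, and $\langle p_D^h,\bm v_h\cdot\bm n\rangle_{\Gamma_D}$ is linear because $p_D^h$ is a fixed Crouzeix--Raviart function.

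Since every constraint is affine and the feasible set is nonempty, the linearity constraint qualification (Abadie's condition for polyhedral sets) holds without any Slater point. The KKT theorem for convex programs with affine constraints (e.g.\ \cite[Ch.~26]{bauschke2017convex}) therefore gives the following at the minimizer $\bm u_h^{rt}$. There exist a multiplier $\overline p_h^{rt}\in\mathcal L^0(\mathcal T_h)$ for the divergence constraint; this space is the natural one, since $\mathrm{div}\,\mathcal{R}T^0(\mathcal T_h)\subseteq\mathcal L^0(\mathcal T_h)$ and the constraint is imposed by testing against $\mathcal L^0(\mathcal T_h)$. There also exist nonnegative facetwise multipliers $\mu^a,\mu^b\in\mathcal L^0(\mathcal S_h^C)$ for the lower and upper bounds. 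Stationarity reads $J'(\bm u_h^{rt})\bm v_h-(\overline p_h^{rt},\mathrm{div}\,\bm v_h)_\Omega-\langle\mu^a,\bm v_h\cdot\bm n\rangle_{\Gamma_C}+\langle\mu^b,\bm v_h\cdot\bm n\rangle_{\Gamma_C}=0$ for all $\bm v_h$, and complementary slackness holds facetwise. The sign of $\overline p_h^{rt}$ is a convention, since equality multipliers are free.

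Next I set $\lambda_h^a:=-\mu^a\le0$ and $\lambda_h^b:=-\mu^b\le0$. Substituting $J'(\bm u_h^{rt})\bm v_h=(\bm K_h^{-1}\Pi_h\bm u_h^{rt},\Pi_h\bm v_h)_\Omega+\langle p_D^h,\bm v_h\cdot\bm n\rangle_{\Gamma_D}$ (using the symmetry of $\bm K_h$) yields \eqref{eq:disc-KKT-a}. Equation \eqref{eq:disc-KKT-b} and inequality \eqref{eq:disc-KKT-c} express feasibility, \eqref{eq:disc-KKT-d} is the sign condition, and \eqref{eq:disc-KKT-e} is complementary slackness. Because $\lambda_h^{a},\lambda_h^{b}$ and the constraint residuals are all facetwise constant, the $L^2$ pairings reduce to sums of facetwise products, and the facetwise conditions follow.

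Conversely, any tuple satisfying \eqref{eq:disc-KKT} gives a $\bm u_h^{rt}$ that satisfies the sufficient KKT conditions for the convex program, so it is a maximizer of \eqref{eq:inner_max_unconstrained}. Uniqueness then follows from \Cref{thm:disc-dual-exist}, or from a direct argument: any two maximizers $\bm u^1,\bm u^2$ satisfy $\Pi_h\bm u^1=\Pi_h\bm u^2$ by strict convexity in $\Pi_h\bm v_h$, and they have the same divergence $f_h$. In lowest-order Raviart--Thomas, $\bm v_h=\Pi_h\bm v_h+\tfrac{\mathrm{div}\,\bm v_h}{d}(x-x_T)$ on each element $T$, so $\bm u^1=\bm u^2$. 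I expect the only delicate point to be bookkeeping, namely matching the sign conventions of $\lambda_h^a,\lambda_h^b$ in \eqref{eq:disc-KKT-a} with the standard nonnegative Lagrange multipliers. No analytic obstacle arises because the problem is finite-dimensional and polyhedral.
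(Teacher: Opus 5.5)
Your proposal is correct and follows essentially the same route as the paper, which justifies the theorem only by observing that \eqref{eq:inner_max_unconstrained} is a finite-dimensional concave quadratic maximization over the polyhedral set $K_h^\star$ with affine divergence and flux constraints, so the KKT conditions are necessary and sufficient, and then introducing $\overline p_h^{rt}$, $\lambda_h^a$, $\lambda_h^b$ as the multipliers. Your explicit sign bookkeeping $\lambda_h^{a,b}=-\mu^{a,b}$ and your direct uniqueness argument via $\bm v_h|_T=\Pi_h\bm v_h+\tfrac{1}{d}\,\mathrm{div}\,\bm v_h\,(\bm x-x_T)$ fill in details the paper leaves implicit; the uniqueness argument is worth keeping, because $\bm v_h\mapsto\tfrac12\|\bm K_h^{-1/2}\Pi_h\bm v_h\|_{2,\Omega}^2$ need not be strictly convex on all of $\mathcal{RT}^0(\mathcal T_h)$ ($\Pi_h$ can have a nontrivial kernel there), whereas it is strictly convex on each affine set of fixed divergence.
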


\subsection{Generalized inverse Marini formula}\label{sec:invMarini}
%
The discrete KKT system~\eqref{eq:disc-KKT} produces a piecewise
constant pressure approximation $\overline p_h^{rt}\in\mathcal L^0(\mathcal T_h)$.
We now show that the discrete primal solution
$p_h^{cr}\in\mathcal{S}^{1,cr}(\mathcal{T}_h)$ of
Theorem~\ref{thm:disc-strong-duality} can be recovered \emph{directly},
 by an inexpensive explicit elementwise post-processing of the pair $(\overline{p}_h^{rt},\bm{u}_h^{rt})$. This is the content of the following lemma, which
generalizes the classical inverse Marini formula~\cite{Marini85}:
\begin{lemma}[Generalized inverse Marini formula]\label{lem:marini}
Let $(\bm u_h^{rt},\overline{p}_h^{rt},\lambda_h^a,\lambda_h^b)^\top  \in  \mathcal{R}T^0(\mathcal{T}_h) \times \mathcal{L}^0(\mathcal{T}_h) \times (\mathcal{L}^0(\mathcal{S}_h^C))^2$ be such that the KKT conditions of~\eqref{eq:disc-KKT}~are~satisfied.~Then,
a discrete primal solution $\widehat{p}_h \in K_h$ is available via the following generalized inverse Marini formula:
\begin{align} \label{eq:inverse_marini}
   \widehat{p}_h = \overline{p}_h^{rt} -\bm K_h^{-1} \Pi_h \bm u_h^{rt} \cdot (\textup{id}_{\mathbb{R}^d} - \Pi_h  \textup{id}_{\mathbb{R}^d}).
    \end{align}
\end{lemma}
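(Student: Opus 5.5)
The plan is to verify directly that $\widehat p_h$ belongs to $\mathcal S^{1,\mathrm{cr}}(\mathcal T_h)$ and to $K_h$, and then that the pair $(\widehat p_h,\bm u_h^{rt})$ has zero discrete primal--dual gap. By \Cref{thm:disc-aposteriori-id} and discrete strong duality (\Cref{thm:disc-strong-duality}), zero gap makes $\widehat p_h$ a minimizer of $I_h^{cr}$. By \Cref{lem:disc-gap-decomp}, zero gap is equivalent to the two conditions \eqref{eq:disc_optimality_1} and \eqref{eq:disc_optimality_2} with $p_h^{cr}$ replaced by $\widehat p_h$, since both gap contributions vanish exactly then.

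First, $\widehat p_h$ is elementwise affine. Its gradient is $\nabla_h\widehat p_h=-\bm K_h^{-1}\Pi_h\bm u_h^{rt}$ because $\nabla(\mathrm{id}-x_T)=I$. This gives \eqref{eq:disc_optimality_1} immediately. In addition, $\Pi_h\widehat p_h=\overline p_h^{rt}$.

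Next we need facet values. Test \eqref{eq:disc-KKT-a} with the basis function $\bm v_h=\bm\psi_S$, and compare it with the elementwise integration by parts
\begin{align*}
(\nabla_h\widehat p_h,\Pi_h\bm v_h)_\Omega+(\Pi_h\widehat p_h,\mathrm{div}\,\bm v_h)_\Omega=\sum_T\langle\pi_h\widehat p_h|_T,\bm v_h\cdot\bm n_T\rangle_{\partial T}.
\end{align*}
This identity holds for any elementwise affine function paired with RT$^0$ fields, using that $\bm v_h\cdot\bm n_T$ is constant on each facet. Since $(\bm K_h^{-1}\Pi_h\bm u_h^{rt},\Pi_h\bm v_h)=-(\nabla_h\widehat p_h,\Pi_h\bm v_h)$, the KKT equation becomes
\begin{align*}
\textstyle\sum_T\langle\pi_h\widehat p_h|_T,\bm v_h\cdot\bm n_T\rangle_{\partial T}=\langle\lambda_h^b-\lambda_h^a,\bm v_h\cdot\bm n\rangle_{\Gamma_C}+\langle p_D^h,\bm v_h\cdot\bm n\rangle_{\Gamma_D}.
\end{align*}
With $\bm v_h=\bm\psi_S$ we read off the following facts.
\begin{itemize}
\item For an interior facet $S$, the jump $\pi_h\llbracket\widehat p_h\rrbracket_S$ vanishes, so $\widehat p_h\in\mathcal S^{1,\mathrm{cr}}(\mathcal T_h)$.
\item For a facet $S\in\mathcal S_h^D$, $\widehat p_h(x_S)=\pi_h p_D^h|_S$, so $\widehat p_h\in K_h$.
\item For a facet $S\in\mathcal S_h^C$, $\pi_h\widehat p_h|_S=\lambda_h^b-\lambda_h^a$.
\end{itemize}
The sign conventions must be tracked carefully, because the sign in front of the multiplier term in \eqref{eq:disc-KKT-a} is exactly what fixes these identifications.

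It remains to check \eqref{eq:disc_optimality_2} facetwise on $\Gamma_C$ with $\xi:=\lambda_h^b-\lambda_h^a$. The multipliers satisfy $\lambda_h^a,\lambda_h^b\le0$ together with the complementarity \eqref{eq:disc-KKT-e}.
\begin{itemize}
\item If $\alpha_h<\beta_h$ on $S$, at most one multiplier is nonzero. If $\lambda_h^a\neq0$, then $\xi=-\lambda_h^a\ge0$ and the flux equals $\alpha_h$. If $\lambda_h^b\neq0$, then $\xi\le0$ and the flux equals $\beta_h$.
\item If $\alpha_h=\beta_h$ on $S$, the flux is fixed and $\xi$ has either sign.
\end{itemize}

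We need $(\bm u_h^{rt}\cdot\bm n)\xi=\beta_h\xi^+-\alpha_h\xi^-$ on each facet. In the first case above this forces $\xi>0$ to pair with the flux $\beta_h$, which contradicts the sign just obtained. So with the multiplier conventions of \eqref{eq:disc-KKT-d} the correct formula for $\pi_h\widehat p_h$ on $\Gamma_C$ is presumably $\lambda_h^a-\lambda_h^b$. Equivalently, the sign of the boundary term in \eqref{eq:disc-KKT-a} has to be read consistently with the orientation in \eqref{eq:discrete-IBP}.

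I expect the main obstacle to be this sign bookkeeping between \eqref{eq:disc-KKT-a}, \eqref{eq:disc-KKT-d} and \eqref{eq:discrete-IBP}. I will first rederive \eqref{eq:disc-KKT-a} as the stationarity condition of the Lagrangian, and fix the convention so that $\pi_h\widehat p_h$ is $\ge0$ exactly where the upper bound is active. Once that is settled, the facetwise check of \eqref{eq:disc_optimality_2} is immediate, because on each facet at most one of $\xi^\pm$ is nonzero and the corresponding bound is active. This completes the argument, and no linear solve is needed.
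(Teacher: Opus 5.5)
Your strategy is sound, but the identity you derive from \eqref{eq:disc-KKT-a} has a sign slip. The inconsistency you then find comes from that slip, not from the paper's multiplier conventions. Since $\bm K_h^{-1}\Pi_h\bm u_h^{rt}=-\nabla_h\widehat p_h$ and $\overline p_h^{rt}=\Pi_h\widehat p_h$, the first two terms of \eqref{eq:disc-KKT-a} equal $-\sum_T\langle\pi_h\widehat p_h|_T,\bm v_h\cdot\bm n_T\rangle_{\partial T}$. Hence
\begin{align*}
\sum_{T\in\mathcal T_h}\langle\pi_h\widehat p_h|_T,\bm v_h\cdot\bm n_T\rangle_{\partial T}
=\langle\lambda_h^a-\lambda_h^b,\bm v_h\cdot\bm n\rangle_{\Gamma_C}+\langle p_D^h,\bm v_h\cdot\bm n\rangle_{\Gamma_D}.
\end{align*}
You flipped the sign of the multiplier term but not of the $p_D^h$ term.

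Testing with $\bm\psi_S$, $S\in\mathcal S_h^C$, gives $\pi_h\widehat p_h=\lambda_h^a-\lambda_h^b$, which is \eqref{eq:marini.3.2}. Your facetwise check then works with the conventions exactly as stated:
\begin{itemize}
\item where $\alpha_h=\bm u_h^{rt}\cdot\bm n<\beta_h$, one has $\lambda_h^b=0$ and $\xi=\lambda_h^a\le0$;
\item where $\alpha_h<\bm u_h^{rt}\cdot\bm n=\beta_h$, one has $\lambda_h^a=0$ and $\xi=-\lambda_h^b\ge0$.
\end{itemize}
Your plan to re-derive \eqref{eq:disc-KKT-a} and ``fix the convention'' is therefore unnecessary. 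Equation \eqref{eq:disc-KKT-a} with $\lambda_h^a,\lambda_h^b\le0$ is exactly Lagrangian stationarity for minimizing $\tfrac12\|\bm K_h^{-1/2}\Pi_h\bm v_h\|_{2,\Omega}^2+\langle p_D^h,\bm v_h\cdot\bm n\rangle_{\Gamma_D}$ over $K_h^\star$. The nonnegative multipliers are $-\lambda_h^a$ and $-\lambda_h^b$, attached to $\alpha_h-\bm v_h\cdot\bm n\le0$ and $\bm v_h\cdot\bm n-\beta_h\le0$. In any case the KKT tuple is a hypothesis of the lemma, so its signs are not yours to choose. The step should simply be the corrected computation.

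With that correction, your argument is a genuinely different route from the paper's, and somewhat cleaner. The paper proceeds as follows:
\begin{itemize}
\item It fixes an existing minimizer $p_h^{cr}$ and uses \eqref{eq:disc_optimality_1} to see that $\widehat p_h-p_h^{cr}$ is piecewise constant.
\item It uses orthogonality to $\textup{div}\,\mathcal{R}T^0_0(\mathcal{T}_h)$ to conclude the difference is a global constant.
\item It evaluates $I_h^{cr}(\widehat p_h)-I_h^{cr}(p_h^{cr})$ via \Cref{lem:disc-convexity}. The boundary terms are rewritten through \eqref{eq:alternative_L1} as a supremum over $\|\bm v_h\cdot\bm n\|_{\infty,\Gamma_C}\le1$, followed by a four-case analysis per facet.
\end{itemize}
You instead test with each $\bm\psi_S$ to read off the interior Crouzeix--Raviart continuity, the Dirichlet-facet values and the $\Gamma_C$-facet values directly. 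You then verify both optimality conditions for $(\widehat p_h,\bm u_h^{rt})$ and conclude from a vanishing gap in \Cref{lem:disc-gap-decomp}.

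Your route buys two things. First, it establishes $\widehat p_h\in K_h$ explicitly when $\Gamma_D\neq\emptyset$. The paper's appeal to a discrete Poincar\'e inequality there tacitly needs exactly those Dirichlet-facet values, since $\widehat p_h-p_h^{cr}$ is at that point only known to be piecewise constant. Second, it needs no a priori minimizer at all. Weak duality, i.e.\ nonnegativity of both gap contributions for admissible pairs, already shows that zero gap makes $\widehat p_h$ optimal, and it reproves discrete strong duality along the way. The paper's version reuses its convexity-measure machinery and displays the $L^1$-sparsity structure. Your pointwise check of $(\bm u_h^{rt}\cdot\bm n)\xi=\beta_h\xi^+-\alpha_h\xi^-$ is the same case analysis in a more direct form.
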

\begin{proof}
Throughout, let $p_h^{cr} \in K_h$ denote any minimizer of the discrete primal energy \eqref{eq:discrete_predual}. 
Define $\widehat{p}_h := \overline{p}_h^{rt} - \bm{K}_h^{-1} \smash{\Pi_h \bm u_h^{rt}} \cdot (\textup{id}_{\mathbb{R}^d} - \Pi_h \textup{id}_{\mathbb{R}^d})\in \mathcal{L}^1(\mathcal{T}_h)$. We first establish that $\widehat{p}_h\in \mathcal{S}^{1,cr}(\mathcal{T}_h)$. Observe that $\nabla_h \widehat{p}_h=  - \bm{K}_h^{-1}\Pi_h \bm u_h^{rt}$ and $\Pi_h \widehat{p}_h=\overline{p}_h^{rt}$ a.e. in $\Omega$. 
By construction, $ \nabla_h (\widehat{p}_h-p_h^{cr}) = 0
$ and thus $\widehat{p}_h-p_h^{cr}\in \mathcal{L}^0(\mathcal{T}_h)$.  Note that if $\Gamma_D \ne \emptyset$, $\widehat{p}_h-p_h^{cr} = 0$ by the discrete Poincar\'{e} inequality and we are finished. Therefore, suppose $\Gamma_D = \emptyset$. From the discrete integration-by-parts formula \eqref{eq:discrete-IBP} and \eqref{eq:disc-KKT-a},  
it follows that
    \begin{align*}
        (\widehat{p}_h-p_h^{cr},\textup{div}\,\bm v_h)_{\Omega}
        =(\overline{p}_h^{rt},\textup{div}\,\bm v_h)_{\Omega}-(\bm K_h^{-1}\Pi_h u_h^{rt},\Pi_h \bm v_h)_{\Omega} 
        =0, \quad \forall \bm v_h \in \mathcal{R}T^0_0(\mathcal{T}_h),
     \end{align*}
   \textit{i.e.}, $\widehat{p}_h-p_h^{cr} \perp_{L^2} \textup{div}\,(\mathcal{R}T^0_0(\mathcal{T}_h)) = \mathcal{L}^0(\mathcal{T}_h)/\mathbb{R}$. Consequently, $\widehat{p}_h-p_h^{cr} = \textup{const}.$~and,~thus,~${\widehat{p}_h \in  \mathcal{S}^{1,cr}(\mathcal{T}_h)}$.
   
 We must establish that, in fact, $I_h^{cr}(\widehat{p}_h) = I_h^{cr}(p_h^{cr})$.
To this end, integrating by parts in \eqref{eq:disc-KKT-a} against an arbitrary function $\bm v_h \in \mathcal{R}T^0(\mathcal{T}_h)$ yields
\begin{align*} 
\langle\widehat{p}_h, \bm v_h\cdot\bm n\rangle_{\Gamma_C}
&= 
\langle \lambda_h^a - \lambda_h^b, \bm v_h \cdot \bm n \rangle_{\Gamma_C},
\end{align*}
from which it can easily be deduced that
\begin{subequations}\label{eq:marini.3}
\begin{alignat}{2}
    \pi_h \widehat{p}_h&=\lambda_h^a-\lambda_h^b&&\quad \text{ a.e.\ on }\Gamma_C\,.\label{eq:marini.3.2}
\end{alignat}
\end{subequations}
By \Cref{lem:disc-convexity}, \eqref{eq:disc_optimality_1}, and \eqref{eq:marini.3.2}, 
\begin{align*}
    I_h^{cr} (\widehat{p}_h)-I_h^{cr}(p_h^{cr}) 
    &=- \langle \bm u_h^{rt} \cdot \bm n, \widehat{p}_h\rangle_{\Gamma_C}+ \tfrac{1}{2}\langle \alpha_h+\beta_h, \lambda_h^a-\lambda_h^b\rangle_{\Gamma_C} +  \tfrac{1}{2}\|(\beta_h - \alpha_h) ( \lambda_h^a-\lambda_h^b)\|_{1,\Gamma_C},
\end{align*}
from which it follows that
\begin{align} \label{eq:energy_diff_sup}
I_h^{cr} &(\widehat{p}_h)-I_h^{cr}(p_h^{cr}) \\ &=\sup_{\substack{ \bm v_h \in \mathcal{R}T^0(\mathcal{T}_h) \\ \|\bm v_h \cdot \bm n\|_{\infty, \Gamma_C} \le 1}}\cbr{ \tfrac{1}{2} \big\langle (\beta_h-\alpha_h)(\lambda_h^a-\lambda_h^b) , \bm v_h \cdot \bm n \big\rangle_{\Gamma_C}- \langle \bm u_h^{rt} \cdot \bm n, \lambda_h^a-\lambda_h^b \rangle_{\Gamma_C}
    +\tfrac{1}{2} \langle \alpha_h+\beta_h,\lambda_h^a-\lambda_h^b\rangle_{\Gamma_C}}. \notag
\end{align}
For notational brevity, let us define the following affine functional $\psi_S: \mathbb{P}^0(S) \to \mathbb{R}$:
\begin{align*}
  \psi_S(q_S):=  \tfrac{1}{2} \big\langle (\beta_h-\alpha_h)(\lambda_h^a-\lambda_h^b) ,   q_S \big\rangle_{S}- \langle \bm u_h^{rt} \cdot \bm n, \lambda_h^a-\lambda_h^b \rangle_{S}
    +\tfrac{1}{2} \langle \alpha_h+\beta_h,\lambda_h^a-\lambda_h^b\rangle_{S}.
\end{align*}
Since  $\gamma_{\bm n} : \mathcal{R}T^0(\mathcal{T}_h) \to \mathcal{L}_h^0(\mathcal{S}_h^C)$ is surjective and the inner product over $\Gamma_C$ decomposes additively over faces $S \in \mathcal{S}_h^C$, we can equivalently write \eqref{eq:energy_diff_sup} as
\begin{align} \label{eq:energy_diff_sup_sum}
   I_h^{cr} (\widehat{p}_h)-I_h^{cr}(p_h^{cr}) = \sum_{S \in \mathcal{S}_h^C} \,\sup_{\substack{q_S \in \mathbb{P}^0(S) \\ |q_S| \le 1} } \psi_S(q_S).
\end{align}
To conclude, we must show that each supremum in the right hand side of \eqref{eq:energy_diff_sup_sum} is zero.
We consider four cases depending on whether the constraints are active or inactive on a given face $S \in \mathcal{S}_h^C$: 

\medskip

\emph{$\bullet$ Case 1:} If $\alpha_h<\bm u_h^{rt}\cdot \bm n <\beta_h$ on $S$, the complementarity condition \eqref{eq:disc-KKT-e} yields $\lambda_h^a=\lambda_h^b=0$ and, thus, $\psi_S(q_S) = 0$ for all $q_S \in \mathbb{P}^0(S)$.

\emph{$\bullet$ Case 2:} If $\alpha_h=\bm u_h^{rt}\cdot \bm n =\beta_h$ on a given face $S$, then trivially $ \psi_S(q_S) = 0$ for all $q_S\in \mathbb{P}^0(S)$.

\emph{$\bullet$ Case 3:} If $\alpha_h<\bm u_h^{rt}\cdot \bm n =\beta_h$ on a given face $S$,  the complementarity condition \eqref{eq:disc-KKT-e} yields $\lambda_h^a =0$ and $\lambda_h^b \le 0$. Thus,  $\psi_S(q_S) = \tfrac{1}{2} \big\langle (\beta_h-\alpha_h)\lambda_h^b , 1 - q_S\rangle_{S}$  for all $q_S\in \mathbb{P}^0(S)$.

\emph{$\bullet$ Case 4:} If $\alpha_h=\bm u_h^{rt}\cdot \bm n <\beta_h$ on a given face $S$, the complementarity condition \eqref{eq:disc-KKT-e} yields $\lambda_h^a \le 0$ and $\lambda_h^b = 0$. Thus,  $\psi_S(q_S)= \tfrac{1}{2} \langle (\beta_h-\alpha_h)\lambda_h^a ,  q_S + 1 \rangle_{S}$  for all $q_S\in \mathbb{P}^0(S)$.
\medskip
\noindent 
The supremum is trivially zero in the first two cases. In the third case, since $\beta_h - \alpha_h > 0$ and $\lambda^b \le 0$, the supremum is zero and is attained by $q_S = 1$. In the fourth case, since $\beta_h - \alpha_h > 0$ and $\lambda^a \le 0$, the supremum is zero and is attained by $q_S = -1$.
Consequently, $I_h^{cr}(\widehat{p}_h) - I_h^{cr}(p_h^{cr}) = 0$,
and the result follows.
\end{proof}

\subsection{Semismooth Newton method for the KKT system}\label{sec:semismoothNewton}

For any fixed $c>0$, the
conditions~\eqref{eq:disc-KKT-c}--\eqref{eq:disc-KKT-e} are, facet by facet,
equivalent to the two nonsmooth equations
\begin{align}\label{eq:ncp}
   \lambda_h^a=\min\bigl(0,\lambda_h^a+c\,(\bm u_h^{rt}\cdot\bm n-\alpha_h)\bigr),
   \qquad
   \lambda_h^b=\min\bigl(0,\lambda_h^b+c\,(\beta_h-\bm u_h^{rt}\cdot\bm n)\bigr).
\end{align}
 The
system~\eqref{eq:disc-KKT-a}--\eqref{eq:disc-KKT-b}, \eqref{eq:ncp} is a
finite-dimensional nonsmooth equation to which, since the $\min$-function is
Newton (slantly) differentiable, we apply a semismooth Newton method. The
resulting iteration coincides with the primal--dual active set
strategy and converges locally
superlinearly~\cite{HintermuellerItoKunisch2003}. 

\subsubsection{Algebraic form}
Recall from \Cref{sec:preliminaries} the Raviart--Thomas basis
$\{\bm\psi_S\}_{S\in\mathcal{S}_h}$, characterized by
$\bm\psi_S|_{S'}\cdot\bm n_{S'}=\delta_{S,S'}$, so that every
$\bm u_h^{rt}\in\mathcal{R}T^0(\mathcal{T}_h)$ expands as
$\bm u_h^{rt}=\sum_{S\in\mathcal{S}_h}U_S\bm\psi_S$ with
\begin{equation}\label{eq:rt_DOF}
   U_S = (\bm u_h^{rt}\cdot\bm n_S)|_S,\qquad S\in\mathcal{S}_h.
\end{equation}
Set
$N_{rt}:=\dim\mathcal{R}T^0(\mathcal{T}_h)$,
$N_h^0:=\dim\mathcal{L}^0(\mathcal{T}_h)$, and
$N_h^X:=\mathrm{card}(\mathcal{S}_h^X)$ for $X\in\{C,D\}$, fix an ordering
$\{T_i\}_{i=1,\dots,N_h^0}$ of the elements and an ordering of the facets in
which those of $\mathcal{S}_h^C$ and $\mathcal{S}_h^D$ are enumerated first,
and let $\bm T_h^X\in\{0,1\}^{N_h^X\times N_{rt}}$ denote the Boolean
restriction matrix selecting the degrees of freedom on $\mathcal{S}_h^X$. The
matrix representations of the bilinear forms in~\eqref{eq:disc-KKT} read
\begin{align*}
    \bm A_h   &:= \bigl((\bm K_h^{-1}\Pi_h\bm\psi_{S_i},\Pi_h\bm\psi_{S_j})_\Omega\bigr)_{i,j=1,\dots,N_{rt}} \in \mathbb{R}^{N_{rt}\times N_{rt}}, \\
    \bm B_h   &:= \bigl((\mathrm{div}\,\bm\psi_{S_j},\chi_{T_i})_\Omega\bigr)_{i=1,\dots,N_h^0,\,j=1,\dots,N_{rt}} \in \mathbb{R}^{N_h^0\times N_{rt}}, \\
    \bm M_h^C &:= \bigl((\chi_{S_i},\chi_{S_j})_{\Gamma_C}\bigr)_{i,j}
    =\mathrm{diag}(|S_i|)\in \mathbb{R}^{N_h^C\times N_h^C},
\end{align*}
and the vector representations of the data,
\begin{align*}
    \bm F_h &:= \bigl((f_h,\chi_{T_i})_\Omega\bigr)_{i=1,\dots,N_h^0} \in \mathbb{R}^{N_h^0}, \qquad
    \bm G_h  := -(\bm T_h^{D})^\top\bm M_h^{D}\,\bm p_D^h \in \mathbb{R}^{N_{rt}},
\end{align*}
where $\bm M_h^D$ and $\bm p_D^h$ denote the Dirichlet-facet mass matrix and
the vector of facet averages of the Dirichlet datum on $\mathcal{S}_h^D$.
Then~\eqref{eq:disc-KKT} is equivalent to seeking
$(\bm U,\overline{\bm P},\bm\Lambda^a,\bm\Lambda^b)^\top\in\mathbb{R}^{N_{rt}}\times\mathbb{R}^{N_h^0}\times(\mathbb{R}^{N_h^C})^2$ such that
\begin{subequations}\label{eq:alg-KKT}
\begin{alignat}{2}
\bm A_h\bm U - \bm B_h^\top\overline{\bm P} + (\bm T_h^{C})^\top\bm M_h^{C}(\bm\Lambda^a-\bm\Lambda^b) &= \bm G_h,\label{eq:alg-KKT-a}\\
\bm B_h\bm U &= \bm F_h,\label{eq:alg-KKT-b}\\
\bm\alpha_h\le\bm T_h^{C}\bm U\le\bm\beta_h,\qquad\bm\Lambda^a,\bm\Lambda^b&\le\bm 0,\label{eq:alg-KKT-c}\\
\bm\Lambda^a\odot(\bm T_h^{C}\bm U-\bm\alpha_h)=\bm\Lambda^b\odot(\bm\beta_h-\bm T_h^{C}\bm U) &= \bm 0,\label{eq:alg-KKT-d}
\end{alignat}
\end{subequations}
where $\bm\alpha_h,\bm\beta_h\in\mathbb{R}^{N_h^C}$ collect the facet values
of $\alpha_h,\beta_h$ and $\odot$ denotes the Hadamard product.

\subsubsection{Semismooth Newton scheme}
Applying a semismooth Newton step to the reformulation~\eqref{eq:ncp}
of~\eqref{eq:alg-KKT-c}--\eqref{eq:alg-KKT-d} yields the following iteration.

\begin{algorithm}[Semismooth Newton method]\label{alg:ssn}
Let $c>0$, $\varepsilon_{\mathrm{STOP}}>0$, and an initial vector
$(\bm U,\overline{\bm P},\bm\Lambda^a,\bm\Lambda^b)^0$ be given. Set
$\bm E_h^C:=(\bm T_h^C)^\top\bm M_h^C$. For $k=0,1,2,\ldots$, repeat:
\begin{enumerate}[(i),leftmargin=2.4em,itemsep=.35em,topsep=.35em]
\item Determine the predicted active sets
\begin{align*}
\mathcal A_a^k
  &:=
  \bigl\{i:\bigl((\bm\Lambda^a)^k
       +c(\bm T_h^C\bm U^k-\bm\alpha_h)\bigr)_i<0\bigr\},\\
\mathcal A_b^k
  &:=
  \bigl\{i:\bigl((\bm\Lambda^b)^k
       +c(\bm\beta_h-\bm T_h^C\bm U^k)\bigr)_i<0\bigr\}.
\end{align*}
Let $\bm 1_{\mathcal A_a^k}$ and $\bm 1_{\mathcal A_b^k}$ denote the
corresponding diagonal indicator matrices, and set
$\bm 1_{(\mathcal A_a^k)^c}:=\bm I-\bm 1_{\mathcal A_a^k}$ and
$\bm 1_{(\mathcal A_b^k)^c}:=\bm I-\bm 1_{\mathcal A_b^k}$.

\item With
$\bm T_a^k:=\bm 1_{\mathcal A_a^k}\bm T_h^C$ and
$\bm T_b^k:=\bm 1_{\mathcal A_b^k}\bm T_h^C$, solve
\begin{equation*}
\begin{bmatrix}
\bm A_h & -\bm B_h^\top & \bm E_h^C & -\bm E_h^C\\
\bm B_h & \bm 0 & \bm 0 & \bm 0\\
\bm T_a^k & \bm 0 & \bm 1_{(\mathcal A_a^k)^c} & \bm 0\\
\bm T_b^k & \bm 0 & \bm 0 & \bm 1_{(\mathcal A_b^k)^c}
\end{bmatrix}
\begin{bmatrix}
\bm U^{k+1}\\
\overline{\bm P}^{k+1}\\
(\bm\Lambda^a)^{k+1}\\
(\bm\Lambda^b)^{k+1}
\end{bmatrix}
=
\begin{bmatrix}
\bm G_h\\
\bm F_h\\
\bm 1_{\mathcal A_a^k}\bm\alpha_h\\
\bm 1_{\mathcal A_b^k}\bm\beta_h
\end{bmatrix}.
\end{equation*}

\item Recover $\widehat p_h^{\,k+1}\in\mathcal S^{1,cr}(\mathcal T_h)$
from $(\bm u_h^{rt,k+1},\overline p_h^{rt,k+1})$ by
\eqref{eq:inverse_marini}. Stop if
\begin{equation*}
   \eta_{\mathrm{gap},h}^2(\widehat p_h^{\,k+1},\bm u_h^{rt,k+1})
   :=
   I_h^{cr}(\widehat p_h^{\,k+1})
   -D_h^{rt}(\bm u_h^{rt,k+1})
   \le \varepsilon_{\mathrm{STOP}}.
\end{equation*}
Otherwise, continue with the next value of $k$.
\end{enumerate}
\end{algorithm}

\begin{remark}[Cost and stopping criterion]\label{rem:ssn-stop}
Each semismooth Newton step solves one saddle-point system with the active
facet set fixed. The stopping criterion is not the residual of this linear
system, but the discrete primal--dual gap. Indeed, for every admissible pair,
\Cref{thm:disc-aposteriori-id} identifies $\eta_{\mathrm{gap},h}^{2}$ with
the squared discrete energy error. Once the active sets stabilize,
the iterate satisfies the discrete KKT system~\eqref{eq:disc-KKT}, and the
reconstructed pressure coincides with the discrete primal solution,
$\widehat p_h^{,k+1}=p_h^{cr}$.
\end{remark}

\section{Numerical results}\label{sec:Numerical-results}

In this section, we confirm the theoretical findings of the preceding sections
via numerical experiments: a manufactured-solution study of the \emph{a priori}
error estimates of \Cref{thm:disc-apriori}, an adaptive study based on
the \emph{a posteriori} error identity of \Cref{thm:aposteriori-id}, and
an application to miscible displacement in the SPE10 benchmark reservoir.
The discrete KKT
system~\eqref{eq:disc-KKT} is solved for the flux $\bm u_h^{rt} \in \mathcal{R}T^0(\mathcal{T}_h)$ using the semismooth Newton iteration of \Cref{alg:ssn},
and the pressure
$p_h^{cr} \in \mathcal{S}^{1,cr}(\mathcal{T}_h)$ is recovered from the generalized inverse Marini
formula~\eqref{eq:inverse_marini} in an explicit fashion. 
Linear systems are solved by sparse direct factorization (UMFPACK); the
semismooth Newton iteration is warm-started with the active set of the previous
mesh and, in the time-dependent example, of the previous time step. 
All  computations  use the finite  element  library  \texttt{NETGEN}/\texttt{NGSolve}  (version  v6.2.2602,  \textit{cf}.\  \cite{netgen}/\cite{ngsolve});  all  graphics  use \texttt{Matplotlib} (version 3.10.8, \textit{cf}.\ \cite{Hunter07}) or \texttt{PyVista} (version 0.48.4, \textit{cf}.\ \cite{pyvista}).

\subsection{Numerical example concerning the a priori analysis}

The first experiment is a manufactured-solution test on the L-shaped domain
$\Omega=(-1,1)^2\setminus\big([0,1]\times[-1,0]\big)$ with $\bm K=\bm I$,
$\Gamma_D:=\big([0,1]\times\{0\}\big)\cup\big(\{0\}\times[-1,0]\big)$, and
$\Gamma_C:=\partial\Omega\setminus\overline{\Gamma_D}$. In polar coordinates
$(r,\theta)$ at the re-entrant corner, $0\le\theta\le 3\pi/2$ measured from
the positive $x$-axis, we prescribe the corner singularity
$p=C\,r^{\lambda}\cos(\lambda\theta)$ with $\lambda=\tfrac35$, $C=0.0468$,
and set
$\bm u=-\nabla p
=-C\lambda\,r^{\lambda-1}\big(\cos((1-\lambda)\theta),\sin((1-\lambda)\theta)\big)^{\!\top}$,
$f=0$, and $p_D=p|_{\Gamma_D}$. With $g:=\bm u\cdot\bm n$ on $\Gamma_C$ and
$c:=0.3\,\|g\|_{L^\infty(\Gamma_C)}$, the bounds
$\alpha:=g-c\,\mathbf 1_{\{p\ge0\}}$ and $\beta:=g+c\,\mathbf 1_{\{p\le0\}}$
satisfy $\alpha\le g\le\beta$ and \eqref{eq:cts_optimality_2} by construction,
with the upper (resp.\ lower) bound active on $\{p>0\}\cap\Gamma_C$ (resp.\
$\{p<0\}\cap\Gamma_C$), both of positive length. Since
$|\bm u|=C\lambda\,r^{\lambda-1}$ and $\bm u$ is smooth away from the corner,
$p\in H^{1+s}(\Omega)$ and $\bm u\in(H^{s}(\Omega))^2$ for every $s<\lambda$
but not for $s=\lambda$, and $\bm u\in(L^{q}(\Omega))^2$ for every
$q<2/(1-\lambda)=5$. Apart from the single switching point on the left
boundary edge, $\alpha,\beta$ are smooth along each side of $\Gamma_C$, hence
belong to $H^{t}(\mathcal S_h^{C})$ for every $t<\tfrac12$; taking
$t_\alpha=t_\beta=s-\tfrac12$ in \Cref{thm:disc-apriori}(ii) for every
$s\in(\tfrac12,\lambda)$ predicts
$\rho^2_{\textup{tot},h}
=\mathcal O(h^{2\lambda})
\simeq\mathcal O(N_k^{-\lambda})$.
 Table~\ref{tab:test1-eoc} and Figure~\ref{fig:test1-conv} report the results
under uniform refinement: the discrete gap
$\eta^2_{\mathrm{gap},h}(\Pi_h^{cr}p,\Pi_h^{rt}\bm u)=\rho^2_{\textup{tot},h}$
and the squared broken $H^1$-seminorm error of the pressure attain the
predicted slope $N_k^{-\lambda}$, consistent with \Cref{thm:disc-apriori} and
the norm estimates of \Cref{rem:disc-apriori-norms}. The convergence rate in squared $L^2$-norm of the pressure approximation appears to be approaching $\mathcal{O}(N_k^{-1.4})$.

\begin{figure}[ht]
  \centering
  \includegraphics[width=0.85\linewidth]{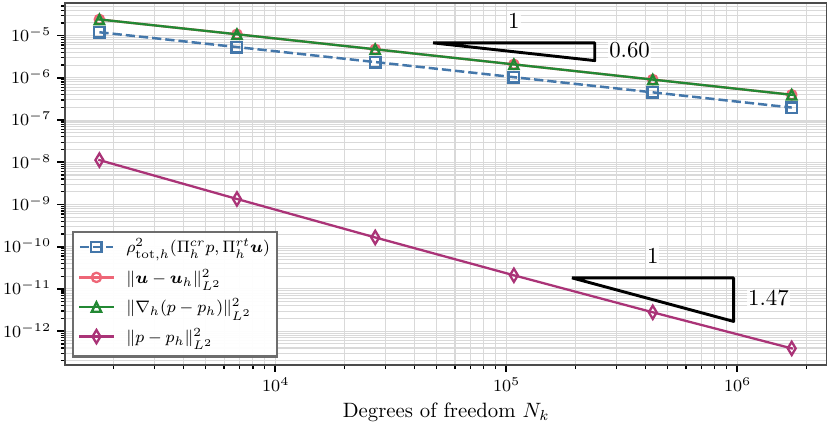}
  \caption{Convergence history under uniform refinement.}
  \label{fig:test1-conv}
\end{figure}

\begin{table}[ht]
  \centering\footnotesize
  \setlength{\tabcolsep}{4pt}
  \resizebox{\linewidth}{!}{\begin{tabular}{r r c c c c c c c c}
\toprule
$k$ & $N_k$ & $I^{cr}_h(p^{cr}_h)$ & $D^{rt}_h(\bm u^{rt}_h)$ & $\rho^2_{\mathrm{tot},h}$ & EOC & $\|\nabla_h(p-p_h)\|^2_{L^2}$ & EOC & $\|p-p_h\|^2_{L^2}$ & EOC \\
\midrule
  2 &     1732 & -1.471892e-03 & -1.471845e-03 & 1.199e-05 & --- & 2.413e-05 & --- & 1.127e-08 & --- \\
  3 &     6824 & -1.464285e-03 & -1.464271e-03 & 5.351e-06 & 0.589 & 1.075e-05 & 0.590 & 1.353e-09 & 1.546 \\
  4 &    27088 & -1.460950e-03 & -1.460946e-03 & 2.361e-06 & 0.593 & 4.741e-06 & 0.594 & 1.660e-10 & 1.522 \\
  5 &   107936 & -1.459492e-03 & -1.459490e-03 & 1.036e-06 & 0.596 & 2.079e-06 & 0.596 & 2.109e-11 & 1.493 \\
  6 &   430912 & -1.458856e-03 & -1.458855e-03 & 4.529e-07 & 0.598 & 9.091e-07 & 0.598 & 2.802e-12 & 1.458 \\
  7 &  1721984 & -1.458578e-03 & -1.458578e-03 & 1.976e-07 & 0.599 & 3.967e-07 & 0.599 & 3.916e-13 & 1.420 \\
\bottomrule
\end{tabular}
}
  \caption{Discrete primal and dual energies
  $I^{cr}_h( p_h^{cr})$ and $D^{rt}_h(\bm u_h^{rt})$; the discrete gap $\rho^2_{\mathrm{tot},h}=\eta^2_{\mathrm{gap},h}$ evaluated at
  $(\Pi_h^{cr}p,\Pi_h^{rt}\bm u)$; and the $H^1$-seminorm and $L^2$-norm errors of the pressure. 
  }
  \label{tab:test1-eoc}
\end{table}

\FloatBarrier

\subsection{Numerical example concerning the a posteriori analysis}

The second experiment tests the gap estimator as a refinement indicator. For
a pair $(q,\bm v)\in K\times K^\star$, \Cref{lem:gap-decomp} localizes
\eqref{eq:gap_estimator} into the elementwise indicators
\begin{equation}\label{eq:gap-local}
\begin{aligned}
  \eta^2_{\mathrm{gap},T}(q,\bm v)
  &:=\eta^2_{\mathrm{gap,I},T}(q,\bm v)+\eta^2_{\mathrm{gap,II},T}(q,\bm v),
  \qquad T\in\mathcal T_k,\\
  \eta^2_{\mathrm{gap,I},T}(q,\bm v)
  &:=\tfrac12\,\|\bm K^{\frac12}\nabla q+\bm K^{-\frac12}\bm v\|_{2,T}^2,\\
  \eta^2_{\mathrm{gap,II},T}(q,\bm v)
  &:=\langle \beta-\bm v\cdot\bm n,\,q^+\rangle_{\partial T\cap\Gamma_C}
   +\langle \bm v\cdot\bm n-\alpha,\,q^-\rangle_{\partial T\cap\Gamma_C},
\end{aligned}
\end{equation}
so that $\eta^2_{\mathrm{gap}}(q,\bm v)=\sum_{T\in\mathcal T_k}
\eta^2_{\mathrm{gap},T}(q,\bm v)$; both contributions are nonnegative, the
boundary one since $\alpha\le\bm v\cdot\bm n\le\beta$ for $\bm v\in K^\star$.
These indicators, evaluated at $q=\overline p_k$ and $\bm v=\bm u_k^{rt}$,
drive the adaptive loop in \Cref{alg:afem} below.

\begin{algorithm}[AFEM]\label{alg:afem}
Given $\theta\in(0,1]$, $\varepsilon_{\mathrm{STOP}}\ge0$, and a conforming
triangulation $\mathcal T_0$ of $\Omega$ resolving the partition of
$\partial\Omega$ and the jump sets of the data, iterate for $k=0,1,2,\dots$:
\begin{description}
\item[\textnormal{(\textsc{Solve})}] Solve the discrete KKT
  system~\eqref{eq:disc-KKT} on $\mathcal T_k$ by
  Algorithm~\ref{alg:ssn}, warm-started with the final active set of
  $\mathcal T_{k-1}$; recover $p_k\in\mathcal S^{1,cr}(\mathcal T_k)$ by the
  inverse Marini formula~\eqref{eq:inverse_marini} and
  $\overline p_k\in\mathcal S^{1}(\mathcal T_k)$ by nodal averaging.
\item[\textnormal{(\textsc{Estimate})}] Compute
  $\eta^2_{\mathrm{gap},T}(\overline p_k,\bm u_k^{rt})$, $T\in\mathcal T_k$;
  stop if $\eta^2_{\mathrm{gap}}:=\sum_{T\in\mathcal T_k}
  \eta^2_{\mathrm{gap},T}\le\varepsilon_{\mathrm{STOP}}$.
\item[\textnormal{(\textsc{Mark})}] Select a minimal
  $\mathcal M_k\subseteq\mathcal T_k$ with
  $\sum_{T\in\mathcal M_k}\eta^2_{\mathrm{gap},T}\ge\theta^2\,
  \eta^2_{\mathrm{gap}}$ (D\"orfler marking).
\item[\textnormal{(\textsc{Refine})}] Generate $\mathcal T_{k+1}$ by
  newest-vertex bisection of all $T\in\mathcal M_k$.
\end{description}
\end{algorithm}

\label{ss:test3}
We take $\Omega=(0,1)^2$, $f=1$, $\Gamma_D=\emptyset$, and
$\Gamma_C=\partial\Omega$. The permeability
$\bm K=\kappa\bm I$ is the Kellogg checkerboard with values
$(\kappa_{LL},\kappa_{LR},\kappa_{UR},\kappa_{UL})=(1,250,1,50)$ on the four
subsquares meeting at the cross point $(\tfrac12,\tfrac12)$, with leading
singular exponent $s\approx0.139$. The bounds $\alpha,\beta$ are piecewise
constant on $\Gamma_C$, with $8$, $16$, $8$, and $8$ subintervals on the
bottom, right, top, and left faces (\Cref{fig:test2-bounds}).
Algorithm~\ref{alg:afem} is run with $\theta=\tfrac12$ and compared against
uniform refinement. \Cref{fig:test2-conv} shows $\eta^2_{\mathrm{gap}}\simeq N_k^{-0.32}$ under
uniform refinement. Notably, this is faster than the $N_k^{-s}$ expected due to the cross-point
singularity, yet far from optimal. On the other hand, the adaptive loop restores the
optimal decay $\eta^2_{\mathrm{gap}}\simeq N_k^{-1}$.
Since the exact minimal energy is unknown, we approximate the value
$I(p)=D(\bm u)$ via Aitken's $\delta^2$-process (\emph{cf}.~\cite{Aitken1927Bernoulli}), applied
to the sequence of adaptive discrete primal energies, yielding
$I(p)\approx-3.28\times10^{-3}$; the primal energy $I(\bar p_h^{cr})$ and dual energy of $D(\bm u_h^{rt})$  of the adaptively refined
sequence quickly converge to this value, whereas the energies of the uniformly refined sequence appear to converge significantly more slowly.

\Cref{fig:test2-surf} compares the pressure solution computed on a sequence of adaptive and uniformly refined meshes. The adaptive meshes concentrate refinement at the
cross point and along the boundary portions where the active set switches
(\emph{cf}. \Cref{fig:test2-refine}). \Cref{fig:test2-tri} displays
$\bm u_h^{rt}\cdot\bm n$ and $\pi_h p_h^{cr}$ on $\Gamma_C$ at the finest
level ($N_k=312\,960$, $207\,559$ elements): in accordance with the discrete
complementarity conditions in \Cref{rem:disc_complementarity},
$\bm u_h^{rt}\cdot\bm n=\beta$ on the $503$ facets where $\pi_h p_h^{cr}>0$,
$\bm u_h^{rt}\cdot\bm n=\alpha$ on the $1352$ facets where $\pi_h p_h^{cr}<0$, and
$\pi_h p_h^{cr}$ vanishes to machine precision on the remaining $1388$ facets.

\begin{figure}[h!]
  \centering
  \includegraphics[width=0.85\linewidth]{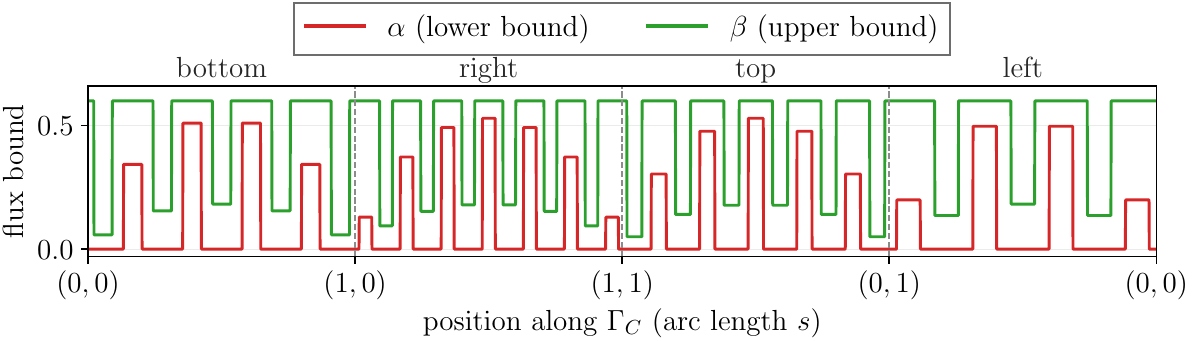}
  \caption{Flux bounds $\alpha,\beta$ along $\Gamma_C=\partial\Omega$.}
  \label{fig:test2-bounds}
\end{figure}

\begin{figure}[h!]
  \centering
  \includegraphics[width=0.9\linewidth]{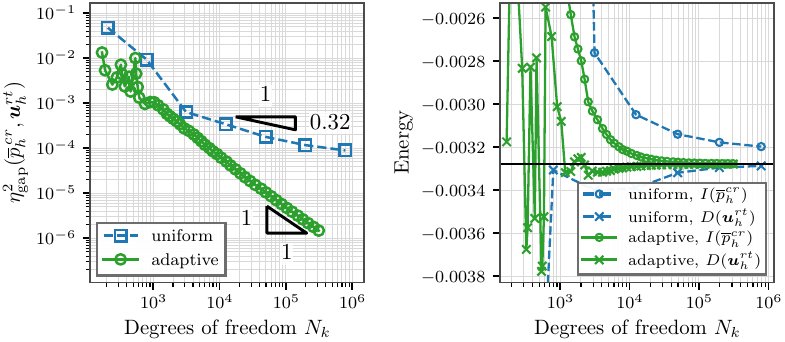}
 \caption{Gap estimator $\eta^2_{\mathrm{gap}}$ versus $N_k$
 for uniform and adaptive refinement (left). Primal energy and dual energies
 $I(\overline p_h^{cr})$ and $D(\bm u_h^{rt})$ approaching the approximation of $I(p)=D(\bm u)$
 obtained via Aitken's $\delta^2$-process (right). \label{fig:test2-conv}}
\end{figure}

\begin{figure}[h!]
  \centering
  \includegraphics[width=1\linewidth]{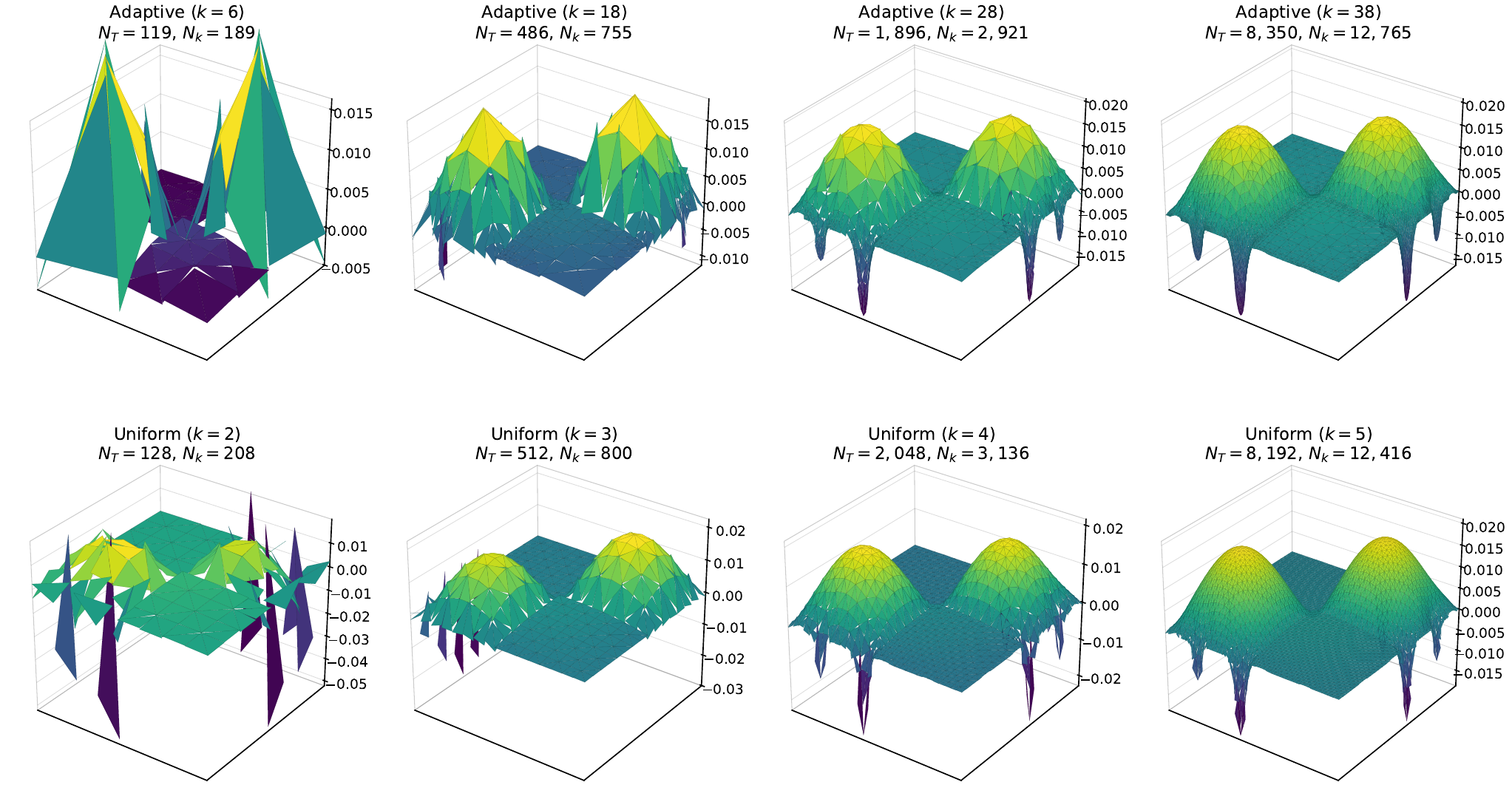}
  \caption{Crouzeix--Raviart pressure approximation $p_h^{cr}$ across various levels of refinement, both adaptive (top)
  and uniform (bottom).}
  \label{fig:test2-surf}
\end{figure}

\begin{figure}[h!]
  \centering
  \includegraphics[width=1\linewidth]{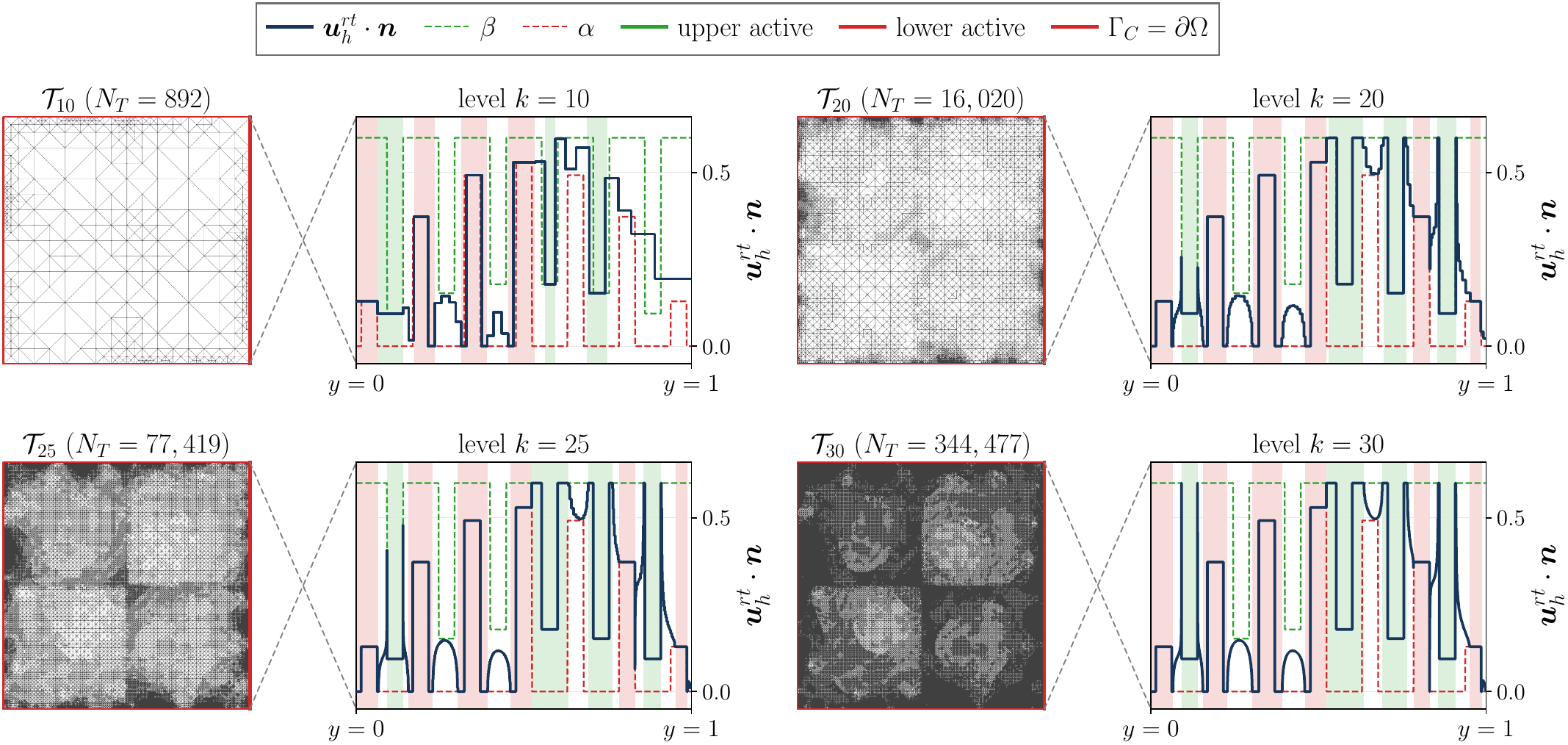}
  \caption{Adaptive meshes and normal flux on the right-face of $\Omega$ across various levels of refinement.}
  \label{fig:test2-refine}
\end{figure}

\begin{figure}[h!]
  \centering
  \includegraphics[width=0.9\linewidth]{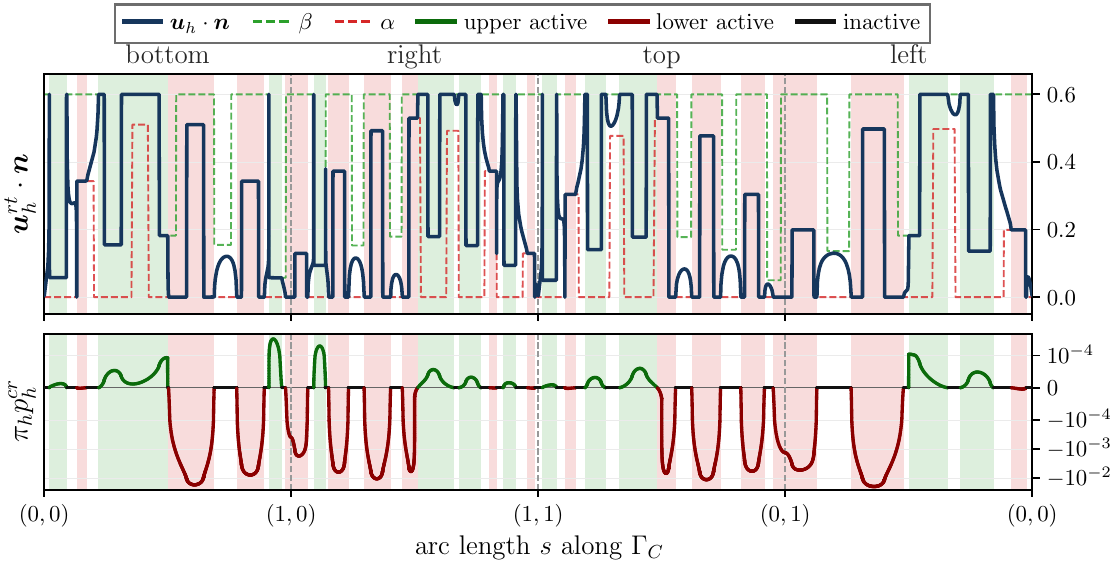}
  \caption{$\bm u_h^{rt}\cdot\bm n$ with bounds (top) and
  $\pi_h p_h^{cr}$ (bottom) on $\Gamma_C$; $N_k=312\,960$.}
  \label{fig:test2-tri}
\end{figure}

\FloatBarrier

\subsection{Miscible displacement in a heterogeneous reservoir}
\label{ss:miscible}

We close with the incompressible miscible-displacement model of porous-media
flow~\cite{LiRiviere2015Miscible}: a solvent of concentration $c$ displaces a
resident fluid through the mixture-dependent mobility
$\bm K(c)=\mu(c)^{-1}\bm k$, with $\bm k$ the intrinsic permeability. The
coupled system is
\begin{subequations}\label{eq:miscible}
\begin{align}
  \operatorname{div}\bm u &= q_I - q_P, \label{eq:miscible_mass}\\
  \bm u &= -\bm K(c)\,(\nabla p - \rho(c)\,\bm g), \label{eq:miscible_darcy}\\
  \phi\,\partial_t c - \operatorname{div}(\bm D(\bm u)\nabla c - \bm u\,c)
    &= q_I\,\bar c - q_P\,c. \label{eq:transport}
\end{align}
\end{subequations}
Here
$\mu(c)=(c\,\mu_s^{-1/4}+(1-c)\,\mu_o^{-1/4})^{-4}$ is the empirical quarter-power
mixing law, $\rho(c)=c\,\rho_s+(1-c)\,\rho_o$ is the density, and
$\bm D(\bm u)=d_m\bm I+\|\bm u\|(\alpha_\ell\bm E+\alpha_t(\bm I-\bm E))$,
$\bm E=\bm u\bm u^\top/\|\bm u\|^2$.  The fully coupled problem is nonconvex, and
the duality framework of the preceding sections does not directly apply.
We therefore discretize in time with implicit Euler and decouple flow
and transport by a sequential time-lagged scheme: with the mobility frozen at
the previous concentration, the flow step at time level $j$ is precisely the
flux-constrained problem~\eqref{eq:cts_max_prob} with permeability
$\bm K(c^{j-1})$, the buoyancy term contributing the additional linear term
$(\rho(c^{j-1})\bm g,\cdot)_\Omega$ to the dual objective. Once $\bm u_h^j$ has been computed, $c_h^j$ is then computed from the the transport equation~\eqref{eq:transport} using an interior-penalty discontinuous Galerkin discretization in space with
upwind convective fluxes and harmonic-mean weighting of the diffusive fluxes,
see, e.g.,~\cite[Section 4.5]{Pietro:book}.

The domain $\Omega \subset \mathbb{R}^3$ is the
$365.76\,\mathrm{m}\times670.56\,\mathrm{m}\times30.48\,\mathrm{m}$ subvolume of Model~2 of the tenth SPE comparative solution
project~\cite{ChristieBlunt01} consisting of the lower $50$ layers of the 
Upper Ness formation, and is discretized by a tetrahedral mesh of $105,456$ elements. The Upper Ness
permeability field is channelized, consisting of high-permeability sand
channels embedded in a low-permeability mudstone background; the permeability
spans roughly seven orders of magnitude (\emph{cf}. \Cref{fig:perm} (a)). A
five-spot well pattern places one injector at the center of the domain and
four producers at its corners; the fluid, transport, and dispersion
parameters (\emph{cf}. \Cref{tab:params}) are taken from~\cite{LiRiviere2015Miscible}. We compare the displacement in three
configurations of this reservoir, an impermeable reference configuration and
two configurations in which a partially sealing fault runs along one lateral
face of the domain (\emph{cf}. \Cref{fig:perm} (b) and (c)).

\begin{table}[ht]
\centering
\caption{Parameters for the miscible-displacement example.}
\label{tab:params}
\small
\begin{tabular}{llll|llll}
\hline
Quantity & Symbol & Value & Units & Quantity & Symbol & Value & Units \\
\hline
Solvent visc.\ & $\mu_s$ & $1.0\times10^{-3}$ & Pa\,s & Molec.\ diff.\ & $d_m$ & $1.8\times10^{-7}$ & m$^2$s$^{-1}$ \\
Resident visc.\ & $\mu_o$ & $9.0\times10^{-4}$ & Pa\,s & Long.\ disp.\ & $\alpha_\ell$ & $1.8\times10^{-5}$ & m \\
Mob.\ ratio & $M$ & $0.9$ & --- & Transv.\ disp.\ & $\alpha_t$ & $1.8\times10^{-6}$ & m \\
Densities & $\rho_s,\rho_o$ & $1000$ & kg\,m$^{-3}$ & Injection & $q_I$ & $1.7$ & m$^3$s$^{-1}$ \\
Gravity & $\|\bm g\|$ & $9.8$ & m\,s$^{-2}$ & Producers & $q_{P_j}$ & $0.34$ & m$^3$s$^{-1}$ \\
Porosity & $\phi$ & $0.2$ & --- & Step / horizon & $\Delta t, T$ & $0.25,\ 15$ & days \\
\hline
\end{tabular}
\end{table}

In the faulted configurations, $\Gamma_C$ consists of the facets of the fault
face where the permeability exceeds the median over the face, and
$\bm u\cdot\bm n=0$ on the rest of the boundary. On $\Gamma_C$ we impose
$0\le\bm u\cdot\bm n\le\beta$, where $\beta$ is the maximal leakage flux per
unit area; the lower bound prevents fluid from re-entering through the fault.
The \emph{longitudinal} fault is parallel to the channels and $\Gamma_C$
comprises $676$ facets with $|\Gamma_C|=10\,219\,\mathrm{m}^2$; the
\emph{transverse} fault is orthogonal to them and $\Gamma_C$ comprises $338$
facets with $|\Gamma_C|=5\,574\,\mathrm{m}^2$
(\emph{cf}.\ Figure~7(b),(c)). Both faults have the same total leakage
capacity $\int_{\Gamma_C}\beta\dif s=c_\beta\,\Delta Q$, $c_\beta=5$, giving
$\beta=1.66\times10^{-4}\,\mathrm{m\,s^{-1}}$ (longitudinal) and
$\beta=3.05\times10^{-4}\,\mathrm{m\,s^{-1}}$ (transverse). Concentration snapshots for the three configurations are shown in
\Cref{fig:plumes-testcase3}, and the concentration histories at the four
producers in \Cref{fig:breakthrough-testcase3}. The semi-smooth Newton
iteration (\Cref{alg:ssn}), warm-started with the active set of the previous
time step, converged in at most $6$ steps per time step in every run.

\begin{figure}[h]
  \centering
  \includegraphics[width=1\linewidth]{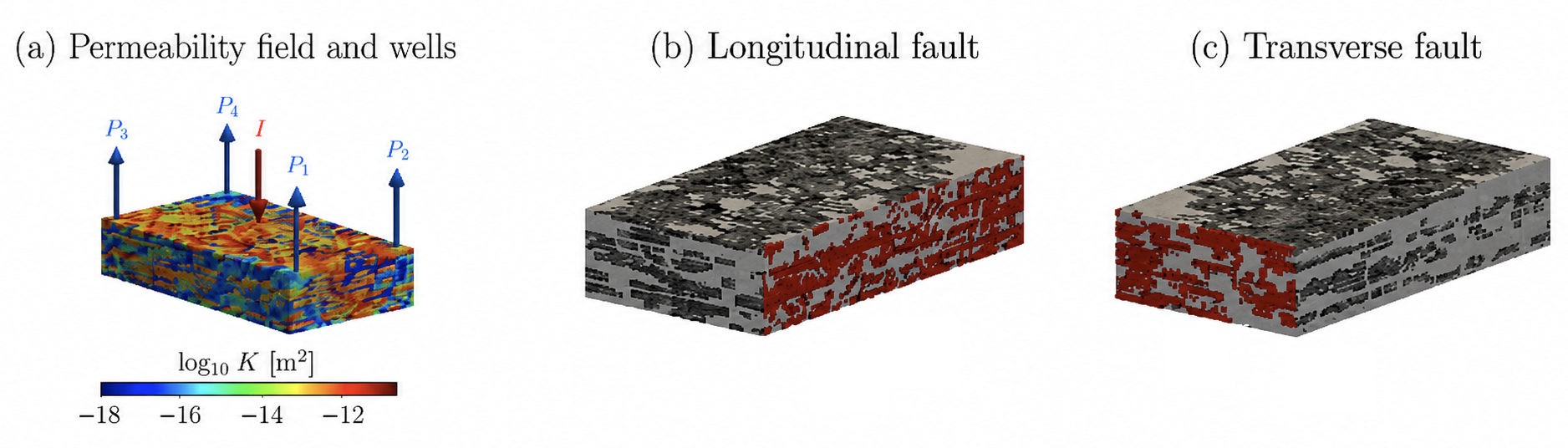}
  \caption{(a) Base-$10$ logarithm of the SPE10 permeability field and the
  five-spot well pattern (central injector, four corner producers). Fault configurations $\Gamma_C$ (pictured in red): (b)~longitudinal, (c)~transverse.}
  \label{fig:perm}
\end{figure}

\begin{figure}[h!]
  \centering
  \includegraphics[width=\linewidth]{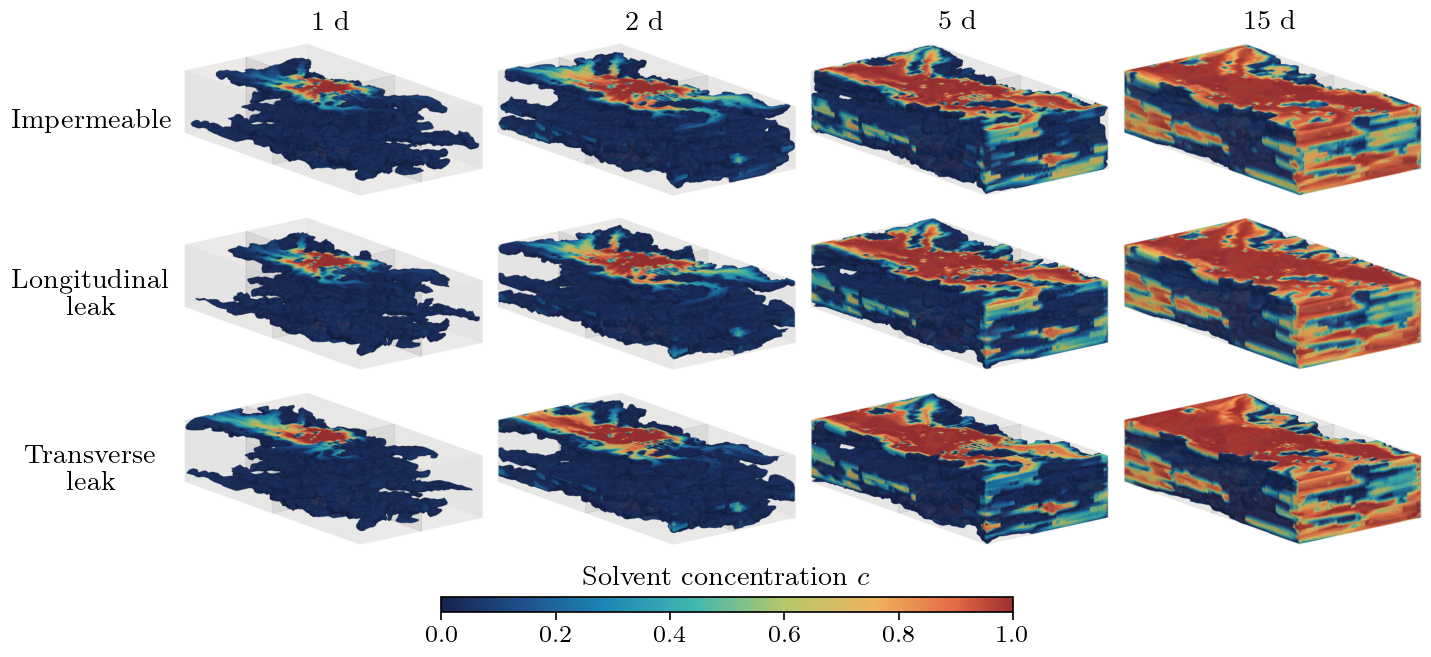}
  \caption{Concentration snapshots at $t=1,2,5,15$~days: fully sealed (top),
  longitudinal fault (middle), transverse fault (bottom).}
  \label{fig:plumes-testcase3}
\end{figure}

\begin{figure}[h!]
  \centering
  \includegraphics[width=0.9\linewidth]{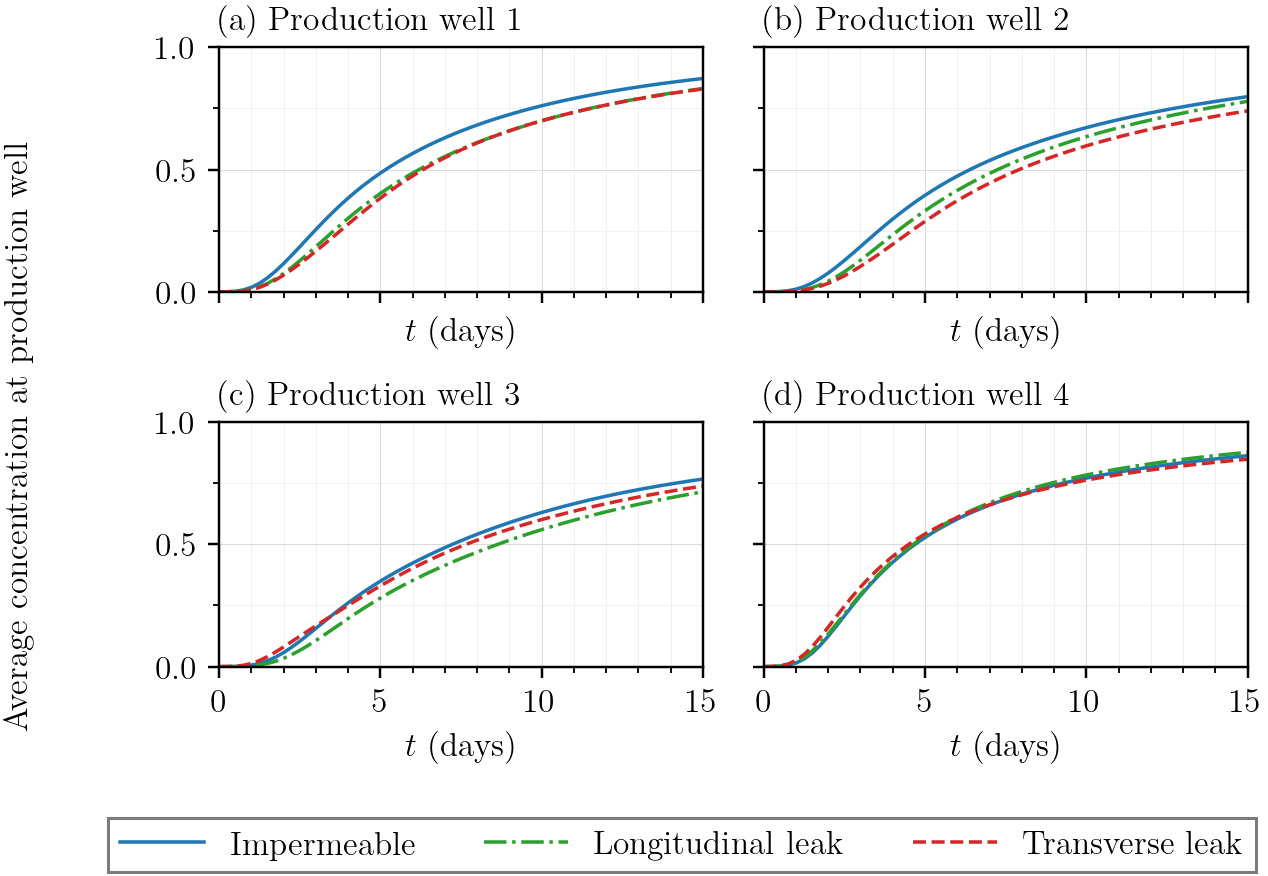}
  \caption{Production concentration histories at the four production wells.
  Each panel shows the average concentration for one well for each of the three cases:
  impermeable (solid), longitudinal (dash-dotted), transverse (dashed).}
  \label{fig:breakthrough-testcase3}
\end{figure}

\section{Conclusion}

We introduced and analyzed a Darcy flow problem with bilateral constraints on the normal flux across a portion of the boundary. On the basis of a Fenchel duality theory at the continuous level, we established well-posedness, strong duality, and convex optimality conditions with a complementarity structure on $\Gamma_C$ (\Cref{thm:cts-dual-exist}, \Cref{lem:strong-duality}), and derived an exact a posteriori error identity for arbitrary admissible approximations (\Cref{thm:aposteriori-id}). On the basis of a discrete Fenchel duality theory for the Raviart--Thomas/Crouzeix--Raviart discretization (\Cref{thm:disc-strong-duality}), we derived a discrete error identity (\Cref{thm:disc-aposteriori-id}) and a priori error decay rates under fractional regularity assumptions on the solution and the flux bounds (\Cref{thm:disc-apriori}). The discrete dual problem is solved by a semismooth Newton method (\Cref{alg:ssn}), the discrete primal solution is recovered by a generalized inverse Marini formula (\Cref{lem:marini}), and the discrete primal-dual gap serves as an optimality-certifying stopping criterion. Numerical experiments confirmed the predicted rates, showed that adaptive refinement driven by the localized gap indicators restores optimal decay, and demonstrated the model in fault-leakage scenarios for the SPE10 benchmark reservoir. Open questions include explicit error decay rates for $0<s\le\tfrac{1}{2}$ and a convergence analysis of the adaptive algorithm (\Cref{alg:afem}).

\bibliographystyle{aomplain}
\bibliography{references}
\end{document}